\newtheorem{thm}{Theorem}[section]
\newtheorem{cor}[thm]{Corollary}
\newtheorem{define}[thm]{Definition}
\newtheorem{lemma}[thm]{Lemma}
\def\Dd{\Delta}
\def\pp{\partial}
\def\na{\nabla}
\def\a{\alpha}
\def\la{\langle}
\def\ra{\rangle}
\def\va{\varepsilon}
\def\R{\mathbb{R}}
\def\lr{\langle\na\rangle}
\numberwithin{equation}{section}
\begin{document}
\title[The 2D MHD SYSTEM]{Global small solution to the 2D MHD system with a velocity damping term}

\author[J. Wu, Y. Wu and X. Xu]{Jiahong Wu$^{1}$, Yifei Wu$^{2}$, and  Xiaojing Xu$^{2}$}

\address{$^1$ Department of Mathematics,
Oklahoma State University,
401 Mathematical Sciences,
Stillwater, OK 74078, USA; and
Department of Mathematics,
College of Natural Science,
Chung-Ang University,
Seoul 156-756, Korea}

\email{jiahong@math.okstate.edu}

\address{$^2$School of Mathematical Sciences,
Beijing Normal University and Laboratory of Mathematics and Complex Systems,
Ministry of Education, Beijing 100875, P.R. China}

\email{yifei@bnu.edu.cn;\,\,xjxu@bnu.edu.cn}

\date{\today}

\begin{abstract}
This paper studies the global well-posedness of the incompressible magnetohydrodynamic (MHD)
 system with a velocity damping term. We establish the global existence and uniqueness
of smooth solutions when the initial data is close to an equilibrium state. In addition,
explicit large-time decay rates for various Sobolev norms of
the solutions are also given.
\end{abstract}
\maketitle

\section{Introduction}\label{intro}

\vskip .1in

This paper examines the global (in time) existence and uniqueness of solutions
to the 2D magnetohydrodynamic (MHD) system with a velocity damping term, namely
\begin{equation} \label{e1.1}
\begin{cases}
\partial_t \vec u +\vec u \cdot \nabla \vec u + \vec u + \nabla P= -\nabla \cdot (\nabla \phi\otimes\nabla\phi),\quad (t,\,x,\,y)\in \mathbb{R}_+\times\mathbb{R}\times\mathbb{R},\\
\partial_t \phi +\vec u\cdot \nabla \phi=0, \\
\nabla \cdot \vec u = 0,\\
\vec u|_{t=1}=\vec u_0(x,\,y),\quad\phi|_{t=1}=\phi_0(x,\,y),
\end{cases}
\end{equation}
where $\vec u=(u,\,v)$ represents the 2D velocity field, $P$ the pressure and $\phi$
the magnetic stream function, and $\nabla \phi\otimes\nabla\phi$ denotes the tensor product.
\eqref{e1.1} is formally equivalent to the
2D MHD equations given by
\begin{equation} \label{e1.11}
\begin{cases}
\partial_t \vec u +\vec u \cdot \nabla \vec u + \vec u + \nabla P= -\frac12\nabla\big(|\vec b|^2\big)+\vec b\cdot \nabla \vec b,\\
\partial_t \vec b +\vec u\cdot \nabla \vec b=\vec b\cdot \nabla \vec u,  \\
\nabla \cdot \vec b = \nabla \cdot \vec u = 0.
\end{cases}
\end{equation}
In fact, $\nabla \cdot \vec b=0$ implies that
$\vec b=\nabla^\perp \phi\equiv \big(\partial_y\phi,-\partial_x\phi\big)$
for a scalar function $\phi$ and, with this substitution,
(\ref{e1.11}) is reduced to \eqref{e1.1}. The MHD equations, modeling
electrically conducting fluid in the presence of a magnetic field,
consist essentially of the interaction between the fluid velocity and
the magnetic field. Electric currents induced in the fluid as a result of
its motion modify the field; at the same time their flow in
the magnetic field leads to mechanical forces which modify
the motion. The MHD equations underly many phenomena such as
the geomagnetic dynamo in geophysics and  solar winds and
solar flares in astrophysics (see, e.g., \cite{Bis,Dav,Pri}).

\vskip .1in
Mathematically the MHD equations can be extremely difficult to analyze due to
the nonlinear coupling between the forced Navier-Stokes equations and the
induction equation. In fact, it remains an outstanding open problem whether
solutions to the 2D MHD equations
\begin{equation} \label{idealmhd}
\begin{cases}
\partial_t \vec u +\vec u \cdot \nabla \vec u + \nabla P
= -\frac12\nabla\big(|\vec b|^2\big)+\vec b\cdot \nabla \vec b,\\
\partial_t \vec b +\vec u\cdot \nabla \vec b=\vec b\cdot \nabla \vec u,  \\
\nabla \cdot \vec b = \nabla \cdot \vec u = 0
\end{cases}
\end{equation}
exist for all time or they blow up in a finite time. One main difficulty is the
lack of global (in time) bounds for the Sobolev norms of the solutions. Adding a velocity
damping term does not appear to be sufficient to overcome this difficulty and our aim here
is at small global smooth solutions. Since the equation of $\phi$ in \eqref{e1.1} is a
transport equation without any damping or dissipation, it is a very involved problem to establish the
global well-posedness of \eqref{e1.1} even under the assumption that the initial data is small.

\vskip .1in
The global regularity problem on the 2D MHD equations with partial
dissipation or partial damping has attracted considerable interests
in the last few years and progress has been made for some cases. The anisotropic 2D MHD
equations with horizontal dissipation and vertical magnetic diffusion were recently examined
by Cao and Wu and shown to possess global classical solutions for any sufficiently
smooth data \cite{CaoWu}. Advances have also been achieved for the case when the dissipation
and the magnetic diffusion are both in the horizontal direction (\cite{CaoReWu,CaoReWuZ}).
F. Lin, L. Xu and P. Zhang recently studied the MHD equations with the Laplacian dissipation in the
velocity equation but without magnetic diffusion and, remarkably, they were able to establish the global existence
of small solutions after translating
the magnetic field by a constant vector (\cite{LinZhang1,LinZhang2,XuZhang}). Their approach
reformulates the system in Lagrangian coordinates and estimates the Lagrangian velocity through
the anisotropic Littlewood-Paley theory and anisotropic Besov space techniques. The partial
dissipation case when only the magnetic diffusion is present has also been examined and the global
$H^1$ weak solutions have been established (see, e.g., \cite{CaoWu,LZ}). In addition, if we increase
the magnetic diffusion from the Laplacian operator to the fractional Laplacian operator $(-\Delta)^\beta$
with $\beta>1$, then the resulting MHD equations do have global regular solutions (\cite{CaoWuYuan,JiuZhao2}). Many more
recent results on the MHD equations with partial or fractional dissipation can be bound in the references
\cite{CMZ1,CMZ2,HeXin1,HeXin2,JiuZhao,Lei,TrYu,Wang,Wu2,Wu3,Wu4,YuanBai,Yama1,Yama2,Zhou}.

\vskip .1in
The contribution of this paper is the global existence and uniqueness of
solutions of \eqref{e1.1} with sufficiently smooth initial data $(u_0,\phi_0)$ close to
the equilibrium state $(0,y)$.
This work is partially inspired by \cite{LinZhang1}. Our approach here exploits the time decay
properties of the solution kernels to a linear differential equation which, with suitable
nonlinear forcing terms,
governs the translated version of \eqref{e1.1}. We now give a more precise account of our ideas. Setting
$$
 \phi=\psi+y
$$
converts \eqref{e1.1} into the following equivalent system of equations for $(u,v, \psi)$,
\begin{equation}\label{e1.2}
\begin{cases}
\pp_t u+u\,\pp_x u+v\pp_yu+u+\pp_x \tilde P=-\Dd\psi\pp_x\psi,\\
\pp_tv+u\,\pp_x v+v\pp_yv+v+\pp_y \tilde P=-\Dd\psi-\Dd\psi\pp_y\psi,\\
\pp_t\psi+u\pp_x\psi+v\pp_y\psi+v=0,\\
\pp_xu+\pp_yv=0,
\end{cases}
\end{equation}
where $\tilde P=P + \frac12 |\na \phi|^2$. Applying
$\na \cdot \vec u=0$ to eliminate the pressure term yields
\begin{align}
\label{u1}
&\pp_t u+u-\pp_{xy}\psi=\Pi_1,\\
\label{v1}
&\pp_t v+v{+}\pp_{xx}\psi=\Pi_2,\\
\label{p1}
&\pp_t\psi+u\pp_x\psi+v\pp_y\psi+v=0,
\end{align}
where
\begin{align}
&\Pi_1=-\vec u\cdot\na u+{\pp_x }{\Delta}^{-1}\na\cdot(\vec u\cdot\na \vec u)
-\Dd\psi\,\pp_x\psi +{\pp_x }{\Delta}^{-1}\na\cdot(\Dd \psi\,\na \psi),
\label{N1term}\\
&\Pi_2=-\vec u\cdot\na v+{\pp_y}{\Delta}^{-1}\na\cdot(\vec u\cdot\na \vec u)
-\Dd\psi\,\pp_y\psi +{\pp_y }{\Delta}^{-1}\na\cdot(\Dd \psi\,\na \psi). \label{N2term}
\end{align}
Taking the time derivative on the equations \eqref{u1}--\eqref{p1}, we obtain
\begin{equation}\label{e1.6}
\begin{cases}
\pp_{tt} u+\pp_t u-\pp_{xx} u=F_1,
\\
\pp_{tt} v+\pp_t v-\pp_{xx} v=F_2,
\\
\pp_{tt} \psi+\pp_t \psi-\pp_{xx} \psi=F_0,
\\
\vec u|_{t=1}=\vec u_0(x,\,y),\quad \vec u_t|_{t=1}=\vec u_1(x,\,y),
\\
\psi|_{t=1}=\psi_0(x,\,y),\quad \psi_t|_{t=1}=\psi_1(x,\,y),
\end{cases}
\end{equation}
where $\vec u_1=(u_1(x,\,y),\,v_1(x,\,y)),\,\,\psi_0=\phi_0-y$,  and
\begin{align*}
&u_1=(-u+\pp_{xy}\psi+\Pi_1)|_{t=1},\\
&v_1=(-v-\pp_{xx}\psi+\Pi_2)|_{t=1},\\
&\psi_1=(-u\pp_x\psi-v\pp_y\psi-v)|_{t=1},
\end{align*}
and
\begin{align}
&F_0=-\vec u\cdot \na\psi-\pp_t(\vec u\cdot \na \psi){-\Pi_2},
\label{Def:F0}\\
&F_1=\pp_t\Pi_1-\pp_{xy}(\vec u\cdot \na \psi),\label{Def:F1}\\
&F_2=\pp_t\Pi_2+\pp_{xx}(\vec u\cdot \na \psi).\label{Def:F2}
\end{align}
The structure of the linear part in (\ref{e1.6}) plays a crucial role in ensuring the
global existence of small solutions. In fact, the solution kernels of the linear equation
decay in time in suitable spatial functional settings. Let us be more accurate. As detailed
in Section \ref{preliminary},
the solution of the linear equation
\begin{align*}
\partial_{tt}\Phi+\partial_t \Phi-\partial_{xx}\Phi=0
\end{align*}
with the initial data
$$
\Phi(0,x,y)=\Phi_0(x,y),  \,\,\, \Phi_t(0,x,y)=\Phi_1(x,y)
$$
can be written as
$$
{\Phi}(t,x,y)=K_0(t,\partial_x){\Phi}_0+K_1(t,\partial_x)\big(\frac12{\Phi}_0+{\Phi}_1\big),
$$
where the solution operators $K_1$ and $K_2$ are explicitly derived in Section \ref{preliminary}.
By Duhamel's Principle, the solution of the inhomogeneous equation
\begin{align*}
\partial_{tt}\Phi+\partial_t \Phi-\partial_{xx}\Phi=F,
\end{align*}
with initial data $\Phi(1,x)=\Phi_0,\, \partial_t\Phi(1,x)=\Phi_1$ is given by
\begin{align}
\Phi(t,x,y)=&K_0(t,\partial_x)\Phi_0+K_1(t,\partial_x)\big(\frac12\Phi_0+\Phi_1\big) \nonumber\\
&+\int_1^t K_1(t-s,\partial_x)F(s,x,y)\,ds.\label{e1.6int}
\end{align}
By letting $\Phi=(u,v,\psi)$ and $F=(F_0,F_1,F_2)$, (\ref{e1.6int}) gives an integral representation
of (\ref{e1.6}).
Thanks to the time decay properties of $K_1$ and $K_2$ (established in Section \ref{preliminary}),
the nonlinear parts in (\ref{e1.6}) remain small and the solution map is a contraction for all time.
More details will be unfolded in the
subsequent sections.

\vskip .1in
To state our main result, we introduce the functional settings.
Let $X_0$ be the Banach space defined by the norm
\begin{align*}
\|(\vec u_0,\,\psi_0)\|_{X_0}=\|\la\na\ra^N(\vec u_0,\,\nabla \psi_0)\|_{L^2_{xy}}
+\|\lr^{6+}(\vec u_0,\,\psi_0)\|_{L^1_{xy}}+\|\lr^{6+}(\vec u_1,\,\psi_1)\|_{L^1_{xy}}.
\end{align*}
where $\la\na\ra=(I-\Delta)^{\frac12}$  and $a+$ denotes $a+\epsilon$ for
any small $\epsilon$. For notational convenience, we also write
$$
\|f\|_q = \|f\|_{L^q_{xy}}, \qquad 1\le q\le \infty.
$$
Now we define our working space as $X$ with its norm given by
\begin{equation}\label{X}
\begin{aligned}
\|(\vec u,\,\psi)\|_{X} = \sup_{t\geq 1}&\left\{t^{-\varepsilon}\|\la\na\ra^N(\vec u(t),\,
\nabla \psi(t))\|_2+t^{\frac14}\|\la\na\ra^3\psi\|_2\right.\\
&+t^\frac32\|\la\na\ra\pp_{xx}\psi\|_\infty+ t^\frac54\|\la\na\ra^3\pp_{xx}\psi\|_2
+ t^\frac32\|\pp_{xxx}\psi\|_2\\
&\left. + t^\frac32\|\pp_{t}\vec{u}\|_\infty + t^\frac54\|\la \na\ra\pp_{t}\vec{u}\|_2+t\|\la\na\ra\pp_{x}\vec u\|_\infty
+t^\frac32\|\partial_x\partial_t v\|_2\right\}.
\end{aligned}
\end{equation}

Here $N$ is a big positive integer and $\varepsilon>0$ is a small parameter. For the sake of the clarity
of our presentation, we intentionally avoid the tedious calculations needed for providing an accurate range
of $N$. However, sufficiently large $N$ and small $\varepsilon>0$, say $N=20$ and $\varepsilon=0.01$,
would serve our purpose.

\vskip .1in
Our main result can then be stated as follows. We use $A \lesssim B$ or $B  \gtrsim A$ to denote the statement
that $A\le C B$ for some absolute constant $C>0$.
\begin{thm} \label{th1}
Let $\psi=\phi-y$ and $\psi_0=\phi_0-y$. Then there exists a small constant $\varepsilon_0>0$ such that,
if the initial data $(\vec u_0,\,\phi_0)$ satisfying $\|(\vec u_0,\,\psi_0)\|_{X_0}\le\varepsilon_0 $,
then there exists a unique global solution $(u,\,v,\,\phi,\,P)$ to the system \eqref{e1.1} with
$$
(u,\,v,\,\phi)\in X,\qquad P\in C\big([1,\infty);H^N(\R^2)\big).
$$
Moreover, the following decay estimates hold
$$
\|u(t)\|_{L^\infty_{xy}}\lesssim \varepsilon_0 t^{-1};\quad \|v(t)\|_{L^\infty_{xy}}\lesssim \varepsilon_0 t^{-\frac32};\quad
\|\psi(t)\|_{L^\infty_{xy}}\lesssim \varepsilon_0 t^{-\frac12}; \quad \|P(t)\|_{L^\infty_{xy}}\lesssim \varepsilon_0 t^{-\frac12}.
$$
\end{thm}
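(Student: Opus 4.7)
The plan is to prove global existence, uniqueness, and decay by a bootstrap in the space $X$. Combined with a local existence result (standard Picard iteration in $H^N$), it will suffice to establish the \emph{a priori} bound
\begin{equation*}
\|(\vec u,\psi)\|_X \le C_1 \varepsilon_0 + C_2 \|(\vec u,\psi)\|_X^2
\end{equation*}
for any smooth solution on a time interval $[1,T]$ carrying a priori smallness. This will close provided $\varepsilon_0$ is small, and a standard continuity/continuation argument will then give a global solution with $\|(\vec u,\psi)\|_X \lesssim \varepsilon_0$.

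For every component of the $X$-norm except the top-order one, I would apply the Duhamel formula \eqref{e1.6int} with $\Phi=u,v,\psi$ and $F=F_1,F_2,F_0$ respectively. The linear piece is controlled by the kernel bounds for $K_0(t,\pp_x)$ and $K_1(t,\pp_x)$ promised in Section \ref{preliminary}: the $L^1$ portions of the $X_0$-norm, together with $L^1\to L^\infty$-type decay of the kernels (which behaves like that of a one-dimensional damped wave equation in the $x$-variable), will generate exactly the rates $t^{-3/2}$, $t^{-5/4}$, $t^{-1/4}$, etc.\ demanded by the various components of $X$. For the nonlinear piece $\int_1^t K_1(t-s,\pp_x)F_j(s)\,ds$, I would substitute the bootstrap ansatz into \eqref{N1term}--\eqref{N2term} and \eqref{Def:F0}--\eqref{Def:F2}, expand each bilinear or trilinear term, and bound it so as to extract the target rate $t^{-\alpha}$ times an $s$-integrable remainder $s^{-\beta}$ with $\beta>1$; the quadratic gain delivers $\lesssim \|(\vec u,\psi)\|_X^2\cdot t^{-\alpha}$, matching the right-hand side of the bootstrap inequality.

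The top-order piece $t^{-\varepsilon}\|\lr^N(\vec u,\na\psi)\|_2$ is not accessible via Duhamel and would instead be treated by a direct energy estimate on \eqref{u1}--\eqref{p1}. Testing against $\lr^{2N}\vec u$ and $\lr^{2N}\na\psi$, and exploiting the cancellation between the linear coupling terms $-\pp_{xy}\psi$ in \eqref{u1}, $+\pp_{xx}\psi$ in \eqref{v1}, and the transport term in \eqref{p1}, should produce
\begin{equation*}
\tfrac{d}{dt}\|\lr^N(\vec u,\na\psi)\|_2^2 + 2\|\lr^N\vec u\|_2^2 \lesssim R(t)\,\|\lr^N(\vec u,\na\psi)\|_2^2,
\end{equation*}
where $R(t)$ collects $L^\infty$-type commutator terms built from lower-order quantities that are already controlled by the $X$-norm with integrable-in-time decay (for instance $\|\lr\pp_x\vec u\|_\infty \lesssim \|(\vec u,\psi)\|_X t^{-1}$ and $\|\lr\pp_{xx}\psi\|_\infty \lesssim \|(\vec u,\psi)\|_X t^{-3/2}$). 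Grönwall then yields the allowed $t^\varepsilon$ growth with constant proportional to $\varepsilon_0$. Uniqueness would follow from the same energy estimate applied to the difference of two solutions in a lower-regularity norm.

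The main obstacle will be the pervasive anisotropy: the operator $\pp_{tt}+\pp_t-\pp_{xx}$ yields decay only in the $x$-direction, so every nonlinear $y$-derivative must be either redirected onto an $x$-derivative via integration by parts inside $\Pi_j$ and $F_j$, or absorbed through interpolation against the top-order Sobolev norm. A related subtlety is that $F_1,F_2$ contain $\pp_t$ of nonlinear terms; I would re-express $\pp_t\vec u$ and $\pp_t\psi$ using \eqref{u1}--\eqref{p1} so that they are controlled by the dedicated $X$-norm components $\|\pp_t\vec u\|_\infty$, $\|\lr\pp_t\vec u\|_2$, and $\|\pp_x\pp_t v\|_2$ that are built into \eqref{X} precisely for this purpose. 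Once the $X$-bound is closed, the pressure would be recovered from $P = -\Delta^{-1}\na\cdot(\vec u\cdot\na\vec u + \Dd\psi\,\na\psi) - \tfrac12|\na\phi|^2$, and the stated $L^\infty$ decay rates for $u,v,\psi,P$ would follow by Sobolev embedding and interpolation from the $X$-norm bound $\lesssim \varepsilon_0$.
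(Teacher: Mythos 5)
Your proposal matches the paper's architecture closely: a bootstrap inequality closed in the norm $X$, with the top-order Sobolev norm handled by an $L^2$ energy estimate exploiting the antisymmetric linear coupling and the remaining components of $X$ handled via the Duhamel formula \eqref{Inhomoeqs}, kernel decay bounds for $K_0,K_1$, and the time-derivative substitutions via \eqref{u1}--\eqref{p1}. The treatment of the pressure and the final $L^\infty$ decay rates via interpolation/embedding is likewise what the paper does.

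The one structural ingredient you are missing is the intermediate space $Y$ and Lemma \ref{yl1}. As you set it up, you would try to bound the nonlinearities $\Pi_j$, $F_j$ directly by $\|(\vec u,\psi)\|_X^2$. But the $X$-norm does not directly control quantities that appear ubiquitously in those nonlinear products, such as $\|u\|_\infty$, $\|v\|_\infty$, $\|u\|_2$, $\|v\|_2$, $\|\lr^2\psi\|_\infty$, $\|\lr\partial_x\psi\|_2$; these have to be \emph{extracted} from $X$ by going back through the equations (e.g., $u=\partial_{xy}\psi+\Pi_1-\partial_t u$) and absorbing the resulting self-referential terms via a smallness argument. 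The paper packages this as the auxiliary norm $Y$ and proves $\|(\vec u,\psi)\|_Y\lesssim \|(\vec u,\psi)\|_X+Q(\|(\vec u,\psi)\|_X)$ (Lemma \ref{yl1}), reducing the bootstrap to showing $\|(\vec u,\psi)\|_X\lesssim \|(\vec u_0,\psi_0)\|_{X_0}+Q(\|(\vec u,\psi)\|_Y)$. Without this layer your nonlinear estimates do not formally close even though the intuition is right. Relatedly, the energy estimate's prefactor $R(t)$ involves $\|\nabla\vec u\|_\infty$ and $\|\nabla\vec b\|_\infty^2$, which the paper also controls through $Y$ rather than $X$; and the final decay estimates for $u,v,\psi$ in the theorem are read off from $Y$-norm bounds, not $X$-norm ones. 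One further execution detail you underweight: the estimates of $F_0,F_1,F_2$ in $L^1_{xy}$-based norms (required by the tool Lemmas \ref{lem:tool1}--\ref{lem:tool4}) force a high/low frequency splitting at scale $s^\delta$ and a Riesz-transform bound (Lemma \ref{lem:Riesz-bound}) because $\partial_x\partial_y/\Delta$ is not bounded on $L^1$; simply ``interpolating against the top-order Sobolev norm'' is not enough to land in the $L^1$-framework the kernel estimates demand.
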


\vskip .1in
The proof of Theorem \ref{th1} relies on the following lemma, which can be deduced from a standard
continuity argument (see, e.g.,  Theorem 4 in  \cite{BT}).

\begin{lemma}\label{con}
Assume the initial data $(\vec u_0,\,\psi_0)\in X_0$. Suppose that $(\vec u,\,\psi)$ given by (\ref{e1.6int})
namely the integral representation of (\ref{e1.6}), satisfies
\begin{align}\label{e1.7}
\|(\vec u,\, \psi)\|_{X}\lesssim \|(\vec u_0,\, \psi_0)\|_{X_0} + Q(\|\vec u,\, \psi)\|_X),
\end{align}
where $Q(a)\geq C a^\beta$ for $a\lesssim 1$ and $\beta>1$.
Then there exists $r_0>0$ such that, if
\begin{align*}
\|(\vec u_0,\, \psi_0)\|_{X_0}\lesssim r_0,
\end{align*}
then (\ref{e1.6}) has a unique global solution $(\vec u,\,\psi)\in X$ and
\begin{align*}
\|(\vec u,\, \psi)\|_{X}\lesssim 2r_0.
\end{align*}
\end{lemma}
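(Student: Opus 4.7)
The plan is to combine a standard short-time existence argument for the integral equation (\ref{e1.6int}) with a bootstrap-type continuity argument driven by the a priori estimate (\ref{e1.7}). The statement is structural: it asserts that once a closed inequality of the form (\ref{e1.7}) is in hand, global existence in $X$ follows from a continuity method. For the bootstrap to close, I read the hypothesis on $Q$ as saying that the nonlinear correction is superlinear in the unknown, i.e.\ $Q(a)\lesssim a^\beta$ for some $\beta>1$ as $a\to 0$, so that $Q(a)/a\to 0$.

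First I would set up local existence. For any finite $T>1$, introduce the restricted norm $\|\cdot\|_{X_T}$ obtained by taking $\sup_{1\le t\le T}$ in (\ref{X}). Viewing the right-hand side of (\ref{e1.6int}) (with $\Phi=(u,v,\psi)$ and forcing $F=(F_1,F_2,F_0)$) as a map $\mathcal{N}$ on a closed ball in $X_T$, the linear kernel bounds on $K_0,K_1$ from Section \ref{preliminary}, together with the nonlinear estimate (\ref{e1.7}) applied to differences, make $\mathcal{N}$ a contraction on this ball, provided $T-1$ and the initial data are sufficiently small. This produces a unique local solution and, by iteration, allows continuation as long as $\|(\vec u,\psi)\|_{X_T}$ remains finite.

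Next I would run the continuity argument. Let $C_1$ denote the implicit constant in (\ref{e1.7}) and set
\[
T^{*}=\sup\bigl\{T\ge 1:(\vec u,\psi)\text{ exists on }[1,T]\text{ and }\|(\vec u,\psi)\|_{X_T}\le 2C_1 r_0\bigr\}.
\]
Since $T\mapsto\|(\vec u,\psi)\|_{X_T}$ is nondecreasing and continuous on the maximal existence interval, the goal is to show $T^{*}=\infty$. Suppose, toward a contradiction, that $T^{*}<\infty$; then by continuity $\|(\vec u,\psi)\|_{X_{T^{*}}}=2C_1 r_0$. Feeding this bound into (\ref{e1.7}) and using the superlinear control $Q(2C_1 r_0)\le C_2(2C_1 r_0)^{\beta}$ yields
\[
\|(\vec u,\psi)\|_{X_{T^{*}}}\le C_1 r_0+C_2(2C_1 r_0)^{\beta}.
\]
Choosing $r_0$ so small that $C_2(2C_1)^{\beta}r_0^{\beta-1}\le \tfrac12 C_1$, the right-hand side is bounded by $\tfrac{3}{2}C_1 r_0<2C_1 r_0$, strictly improving the bootstrap assumption. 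This contradicts the maximality of $T^{*}$, hence $T^{*}=\infty$ and the bound $\|(\vec u,\psi)\|_{X}\lesssim r_0$ follows.

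The main obstacle is not internal to this lemma but external: one must eventually verify that the constants in (\ref{e1.7}) are truly independent of $T$, which is the content of the subsequent nonlinear estimates. Once that is granted, the proof is routine; indeed the local-to-global continuation packaged above is precisely the abstract continuity principle of Theorem 4 in \cite{BT}. Uniqueness in $X$ is a byproduct of the contraction step applied on each finite window $[1,T]$, since differences of two solutions with the same data satisfy a homogeneous version of (\ref{e1.6int}) controlled by the same superlinear nonlinearity.
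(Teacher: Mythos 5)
The paper offers no proof of Lemma~\ref{con}; it simply invokes a standard continuity argument, citing Theorem~4 of~\cite{BT}. Your proposal fills in exactly that argument—local contraction in $X_T$, a bootstrap quantity $T^*$ closed by the smallness of $r_0$, and uniqueness as a byproduct of the contraction on finite windows—so it matches the intended route and is correct. You are also right to flag that the hypothesis as printed, $Q(a)\geq C a^\beta$, is a typo: what the argument actually uses (and what the subsequent sections deliver, with $Q(a)$ a sum of powers $a^k$, $k\geq 2$) is the superlinear \emph{upper} bound $Q(a)\lesssim a^\beta$, $\beta>1$, for small $a$, so that $Q(2C_1r_0)\leq \tfrac12 C_1 r_0$ once $r_0$ is small. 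Your closing caveat—that the constants in~\eqref{e1.7} must be uniform in $T$—is the correct observation that this lemma is structural and that the real work lies in Sections~\ref{sec3}--\ref{s3}.
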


In addition, to facilitate the proof, we introduce an auxiliary functional space, in which more terms with explicit time decay estimates are included. Let
\begin{align*}
\|(\vec u,\,\psi)\|_{Y}=&\|(\vec u,\,\psi)\|_X+\sup_{t\geq 1}\left\{t^\frac12\|\la\na\ra^2\psi\|_\infty+t^\frac34\|\la\na\ra\pp_{x}\psi\|_2+\, t\|\pp_x\la\na\ra^3\psi\|_\infty
\right.\\
& \left.+t\|u\|_\infty +t^\frac32\|v\|_\infty+t^\frac34\|u\|_2+
t^\frac54\|\pp_{x}u\|_2+ t^\frac54\|v\|_2\right\}.
\end{align*}
Roughly speaking, the decay rates of the extra terms in the $Y$-norm obey the following rules:
$$
L^\infty\sim t^{-\frac12};\quad L^2\sim t^{-\frac14};\quad \partial_x\sim t^{-\frac12};\quad \partial_t\sim \partial_{xx};\quad u\sim \partial_{x}\sim t^{-\frac12};\quad v\sim \partial_{xx}\sim t^{-1}.
$$
As we show in Section \ref{sec3}, the norms $\|(\vec u,\, \psi)\|_{Y}$ and $\|(\vec u,\, \psi)\|_{X}$ are related
through the following lemma.
\begin{lemma}\label{yl1}
Let the spaces $X, \,Y$ and their norms be defined as above. Then
\begin{align*}
\|(\vec u,\, \psi)\|_{Y}\lesssim \|(\vec u,\, \psi)\|_{X} +  Q(\|(\vec u,\, \psi)\|_X).
\end{align*}
\end{lemma}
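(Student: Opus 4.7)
The plan is to apply the Duhamel representation \eqref{e1.6int} to each component of $(u,v,\psi)$ and to bound every additional quantity in the $Y$-norm by the sum of a linear contribution from the initial data, controlled through the time decay of the kernels $K_0,K_1$ established in Section \ref{preliminary} together with the $X_0$-norm, and a Duhamel contribution from the quadratic forcings $F_0,F_1,F_2$ defined in \eqref{Def:F0}--\eqref{Def:F2}. The latter yields the superlinear term $Q(\|(\vec u,\psi)\|_X)$ after distributing the $X$-norm decay rates across each factor of every quadratic nonlinearity via H\"older's inequality and performing the space-time convolution. The $X_0$-norm of the data is itself absorbed in $\|(\vec u,\psi)\|_X$ evaluated at $t=1$, so the linear contribution produces the leading term on the right-hand side of the claimed estimate.

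Guided by the heuristic rules stated after the definition of the $Y$-norm (each $\partial_x$ gives $t^{-1/2}$, $L^\infty\sim t^{-1/2}$, $L^2\sim t^{-1/4}$, and $v\sim\partial_{xx}\sim\partial_t$), one extracts the appropriate number of $x$-derivatives from the operator $K_1$ to match the time weight required by each $Y$-norm term. For the extra $\psi$-quantities $t^{1/2}\|\langle\nabla\rangle^2\psi\|_\infty$, $t^{3/4}\|\langle\nabla\rangle\partial_x\psi\|_2$ and $t\|\partial_x\langle\nabla\rangle^3\psi\|_\infty$, the additional $\partial_x$-factor is read off directly from the differential operator appearing in the norm, and the decay estimates of $K_0,K_1$ in the anisotropic $L^1_x L^\infty_y$-to-$L^\infty$ and $L^1_x L^2_y$-to-$L^2$ settings close the bound. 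For the $u$-quantities, a single $\partial_x$ is recovered from the kernel, while for the $v$-quantities the $\partial_t$ appearing in $v_1$ and inside $F_2$ is converted into $\partial_{xx}\psi$ by means of the evolution equations \eqref{u1}--\eqref{v1}, thereby supplying the two extra powers of $t^{-1/2}$ required to reach $t^{-3/2}$ in $L^\infty$ and $t^{-5/4}$ in $L^2$.

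Concretely, for a generic target bound $t^\gamma\|D\Phi\|_p$ appearing in the $Y$-norm, I would verify the two estimates
\begin{align*}
\|D K_0(t,\partial_x)\Phi_0\|_p + \|D K_1(t,\partial_x)(\tfrac12\Phi_0+\Phi_1)\|_p &\lesssim t^{-\gamma}\|(\vec u_0,\psi_0)\|_{X_0}, \\
\int_1^t \|D K_1(t-s,\partial_x) F_j(s)\|_p\,ds &\lesssim t^{-\gamma}\, Q(\|(\vec u,\psi)\|_X),
\end{align*}
the first being a direct consequence of the $x$-Fourier multiplier decay of Section \ref{preliminary} after peeling off enough $\partial_x$-derivatives from $D$, and the second reducing via H\"older to a time convolution $\int_1^t (t-s)^{-a}s^{-b}\,ds$ that is uniformly bounded in $t$ provided the exponents satisfy $a\in[0,1)$ and $a+b>1+\gamma$.

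The hardest step is the treatment of the $v$-quantities $t^{3/2}\|v\|_\infty$ and $t^{5/4}\|v\|_2$ together with the contributions of $F_0$ and $F_2$ to the Duhamel integral: two full $\partial_x$-derivatives must be distributed between the kernel $K_1$ and the quadratic source, and the $\partial_t$-derivatives hidden inside $\partial_t\Pi_1$, $\partial_t\Pi_2$ and $\partial_t(\vec u\cdot\nabla\psi)$ have to be rewritten as $\partial_{xx}\psi$ using \eqref{u1}--\eqref{v1} without destroying the time-integrability of the resulting convolution. A secondary complication is the presence of the nonlocal Riesz-type operators $\partial_{x,y}\Delta^{-1}\nabla\cdot(\cdot)$ inside $\Pi_1$ and $\Pi_2$, which are bounded on $L^p$ for $1<p<\infty$ by Calder\'on-Zygmund theory, while the $L^\infty$ endpoint is reached through mild Sobolev loss compensated by the large exponent $N$ built into the $X$-norm.
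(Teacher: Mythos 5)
Your proposal takes a genuinely different route, but it has two gaps that would prevent it from closing.

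First, the paper's Lemma~\ref{yl1} is a statement purely about $\|(\vec u,\psi)\|_Y$ and $\|(\vec u,\psi)\|_X$, with no reference to the initial-data norm $\|(\vec u_0,\psi_0)\|_{X_0}$. Your argument is built on the Duhamel representation \eqref{e1.6int}, whose linear part is controlled by $\|(\vec u_0,\psi_0)\|_{X_0}$ through the kernel estimates of Lemma~\ref{lem:estK0K1}. You then assert that "the $X_0$-norm of the data is itself absorbed in $\|(\vec u,\psi)\|_X$ evaluated at $t=1$." This is false: $X_0$ contains the terms $\|\lr^{6+}(\vec u_0,\psi_0)\|_{L^1_{xy}}$ and $\|\lr^{6+}(\vec u_1,\psi_1)\|_{L^1_{xy}}$, which are $L^1$-based, whereas the $X$-norm at $t=1$ only gives $L^2$ and $L^\infty$ control; on $\R^2$ no $L^1$ norm is dominated by $L^2$ or $L^\infty$ norms. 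So the Duhamel linear term cannot be replaced by $\|(\vec u,\psi)\|_X$, and your estimate would have the wrong structure for Lemma~\ref{yl1}. (It \emph{would} have the right structure for \eqref{con1}, but that is a different statement and is proved later by the Duhamel route.)

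Second, when you expand the Duhamel integral you must bound the nonlinearities $F_0,F_1,F_2$, and after H\"older's inequality these produce factors such as $\|u\|_\infty$, $\|v\|_\infty$, $\|v\|_2$, $\|u\|_2$, $\|\lr^2\psi\|_\infty$, etc. These are precisely the additional terms \emph{in the $Y$-norm}, not in the $X$-norm. One cannot simply "distribute the $X$-norm decay rates across each factor": the $X$-norm controls $\pp_t u$, $\pp_x u$, $\pp_{xx}\psi$, but not $u$ or $v$ themselves with any time decay, so the nonlinear estimates naturally produce $Q(\|(\vec u,\psi)\|_Y)$, not $Q(\|(\vec u,\psi)\|_X)$ — exactly as in Lemmas~\ref{lem:claim1}--\ref{F0bd} of the paper. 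This makes your Duhamel-based argument self-referential: you need Lemma~\ref{yl1} to convert those $Y$-norms back to $X$-norms, which is the statement you are trying to prove.

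The paper avoids both difficulties by not invoking Duhamel at all. It works directly with the evolution equations \eqref{u1}--\eqref{p1}, writing e.g. $u = -\pp_t u + \pp_{xy}\psi + \Pi_1$, bounding the $\psi$-contributions by interpolation between $X$-norm quantities ($\|\lr^3\psi\|_2$, $\|\lr^3\pp_{xx}\psi\|_2$, $\|\lr\pp_{xx}\psi\|_\infty$), and estimating the nonlinearities $\Pi_1,\Pi_2$ so that the $Y$-norm enters only to a fractional power $\alpha\in(0,1)$, i.e. $\|\Pi_j\|\lesssim s^{-\cdots}\|\cdot\|_Y^\alpha\|\cdot\|_X^{2-\alpha}$. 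Young's inequality then splits this into a piece $\eta_0\|\cdot\|_Y$ with $\eta_0$ small (so it can be absorbed into the left-hand side) and a piece depending only on $\|\cdot\|_X$. That absorption argument is the essential mechanism, and it is missing from your proposal. If you want to salvage your approach, you would have to recognize that your bound actually reads $\|Y\|\lesssim\|X_0\|+Q(\|Y\|)$ — which is the content of \eqref{con1}, not of Lemma~\ref{yl1} — and would leave Lemma~\ref{yl1} unproved.
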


As a consequence of Lemma \ref{yl1}, to prove  \eqref{e1.7}, it is enough to verify
\begin{align}\label{con1}
\|(\vec u,\, \psi)\|_{X}\lesssim \|(\vec u_0,\, \psi_0)\|_{X_0}+Q\big(\|(\vec u,\, \psi)\|_Y\big).
\end{align}
Therefore, the proof of Theorem \ref{th1} is then reduced to establishing (\ref{con1})
and our main effort is devoted to achieving this goal. This is a long process and involves
several major components. The first consists of crucial and sharp decay estimates and
four tool lemmas on the kernels. The second is the bounds on the nonlinearities $F_0$, $F_1$ and $F_2$
by treating differently the lower and higher frequencies of their terms. The third involves
the estimates of each member in the $X$-norm in \eqref{X} through the first two components.

\vskip .1in
The rest of the paper is divided into six sections and an appendix.  The second section derives the
solution kernel of the linear equation and represents the solution of (\ref{e1.6}) in an integral
form through the Duhamel formula. Crucial decay estimates for the solution kernels and four tool lemmas
to be used repeatedly are also presented in this section. Section \ref{sec3} proves Lemma \ref{yl1}.
The rest of the sections are devoted to proving (\ref{con1}). Section \ref{sec:energy} bounds
 $t^{-\varepsilon}\|\la\na\ra^N(\vec u(t),\,\nabla \psi(t))\|_2$ through energy estimates,
which give a control of the first term in the definition of the norm of $X$. Section \ref{sec:non}
provides suitable estimates for the nonlinear terms $F_0$, $F_1$ and $F_2$. To obtain these estimates,
we decompose the terms involved to high and low frequencies and apply the results from the second section and
an inequality involving the Riesz transform (Lemma \ref{lem:Riesz-bound}). With the estimates for
$F_0$, $F_1$ and $F_2$ at our disposal,
Sections \ref{s2} and \ref{s3} continue the proof of (\ref{con1}) by repeatedly applying the tool
lemmas in the second section and the estimates for
$F_0$, $F_1$ and $F_2$. The appendix serves four purposes.
It gives an explicit representation of $\Pi_1$ and $\Pi_2$. The key point of this representation is that each term
is written in a way that it possesses as many directives in the $x$-direction as possible. As seen from
Section \ref{preliminary}, the more $x-$derivatives a term has, the faster
it decays in time. This point has played an important role in the estimates of $F_0$, $F_1$ and $F_2$ in
Section \ref{sec:non}. It also provides the proofs of Lemma \ref{lem:Riesz-bound} and Lemma \ref{lem:RR}.
Finally, the properties of the pressure are also given here.

\vskip .4in
\section{Preliminary}
\label{preliminary}

This section is divided into two subsections. The first subsection derives the integral formulation
(\ref{e1.6int}) with explicit representations for $K_1$ and $K_2$. In addition, key decay estimates
for $K_1$ and $K_2$ are also obtained here. The second subsection proves several tool lemmas to be
used in the proof of Theorem \ref{th1}.

\subsection{Linear operators}
We consider the linear equation
\begin{align}\label{Lineareqs}
\partial_{tt}\Phi+\partial_t \Phi-\partial_{xx}\Phi=0
\end{align}
with the initial data
$$
\Phi(0,x,y)=\Phi_0(x,y),\,\,\Phi_t(0,x,y)=\Phi_1(x,y).
$$
Taking the Fourier transform of \eqref{Lineareqs} yields
\begin{align}\label{Lineareqs1}
\partial_{tt}\hat{\Phi}+\partial_t \hat{\Phi}+\xi^2\hat {\Phi}=0,
\end{align}
where the  Fourier transform $\hat \Phi$ is defined as
$$
\hat {\Phi}(t,\xi,\eta)=\int_{\R^2} e^{{-}ix\xi{-}iy\eta}{\Phi}(t,x,y)\,dx dy.
$$
Solving \eqref{Lineareqs1} by a simple ODE theory, we have
\begin{align*}
\hat{\Phi}(t,\xi,\eta)=&
\frac12\Big(e^{\big(-\frac12+\sqrt{\frac14-\xi^2}\big)t}
+e^{\big(-\frac12-\sqrt{\frac14-\xi^2}\big)t}\Big)\widehat{{\Phi}_0}(\xi,\eta)\\
&
+
\frac1{2\sqrt{\frac14-\xi^2}}\Big(e^{(-\frac12+\sqrt{\frac14-\xi^2})t}
-e^{(-\frac12-\sqrt{\frac14-\xi^2})t}\Big)\Big(\frac12\widehat{{\Phi}_0}(\xi,\eta)+\widehat{{\Phi}_1}(\xi,\eta)\Big).
\end{align*}
\begin{define}\label{k01}
Let the operators $K_0(t,\partial_x)$ and $K_1(t,\partial_x)$ be defined as
$$
\widehat{K_0(t,\pp_x) f}(t,\xi,\eta)=\frac12\Big(e^{\big(-\frac12+\sqrt{\frac14-\xi^2}\big)t}
+e^{\big(-\frac12-\sqrt{\frac14-\xi^2}\big)t}\Big)
\hat{f}(t,\xi,\eta);
$$
and
$$
\widehat{K_1(t,\pp_x) f}(t,\xi,\eta)=
\frac1{2\sqrt{\frac14-\xi^2}}\Big(e^{\big(-\frac12+\sqrt{\frac14-\xi^2}\big)t}
-e^{\big(-\frac12-\sqrt{\frac14-\xi^2}\big)t}\Big)\hat{f}(t,\xi,\eta),
$$
where $\sqrt{-1}=i$.
\end{define}
By Definition \ref{k01}, the solution ${\Phi}$ of the equation \eqref{Lineareqs} is written as
$$
{\Phi}(t,x,y)=K_0(t,\partial_x){\Phi}_0+K_1(t,\partial_x)\big(\frac12{\Phi}_0+{\Phi}_1\big).
$$
Moreover, consider the inhomogeneous equation,
\begin{align}\label{Inhomoeqs0}
\partial_{tt}\Phi+\partial_t \Phi-\partial_{xx}\Phi=F,
\end{align}
with initial data $\Phi(1,x)=\Phi_0,\, \partial_t\Phi(1,x)=\Phi_1$. Then we have the following standard Duhamel formula,
\begin{align}\label{Inhomoeqs}
\Phi(t,x,y)=K_0(t,\partial_x)\Phi_0+K_1(t,\partial_x)\big(\frac12\Phi_0+\Phi_1\big)+\int_1^t K_1(t-s,\partial_x)F(s,x,y)\,ds.
\end{align}

In the following, we present some decay estimates on $K_0,\,K_1$.

\begin{lemma}\label{lem:estK0K1}
Let $K_0, K_1$ be defined in Definition \ref{k01}. Then for any $\alpha\ge 0,0\le \beta\le 2$, $1\le q \le \infty,\;i=0,\,1$,
\begin{itemize}
\item[1)] $\big\||\xi|^\alpha\widehat {K_i}(t,\cdot)\big\|_{L^q_\xi(|\xi|\le \frac12)}\lesssim {\langle t\rangle}^{-\frac12(\frac1q+\alpha)};$
\item[2)] $\big\||\xi|^{-\beta}\partial_t\widehat {K_i}(t,\cdot)\big\|_{L^q_\xi(|\xi|\le \frac12)}\lesssim{\langle t\rangle}^{-1-\frac12(\frac1q-\beta)};$
\item[3)] $\big|\widehat {K_i}(t,\xi)\big|\lesssim e^{-\frac12t},$ for any $|\xi|\ge\frac12, \;i=0,\,1;$
\item[4)]  $\big|\langle \xi \rangle^{-1}\pp_t\widehat{K_0}(t, \xi)\big|, \big|\partial_t\widehat {K_1}(t,\xi)\big|\lesssim e^{-\frac12t},$ for any $|\xi|\ge\frac12$.
\end{itemize}
\end{lemma}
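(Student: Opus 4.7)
The plan is to work directly from the explicit symbols
\[
\widehat{K_0}(t,\xi) = \tfrac12(e^{\lambda_+ t} + e^{\lambda_- t}), \qquad \widehat{K_1}(t,\xi) = \frac{e^{\lambda_+ t} - e^{\lambda_- t}}{2\sqrt{1/4-\xi^2}},
\]
with $\lambda_\pm = -\tfrac12 \pm \sqrt{\tfrac14-\xi^2}$, and to split $|\xi|\le \tfrac12$ (where the square root is real, $\lambda_+\in[-1/2,0]$ and $\lambda_-\in[-1,-1/2]$) from $|\xi|\ge\tfrac12$ (where $\sqrt{1/4-\xi^2}=i\sqrt{\xi^2-1/4}$ is imaginary and both modes carry the uniform damping $e^{-t/2}$). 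This matches the structure of the four assertions exactly: (1)--(2) capture heat-kernel-like parabolic decay at the effective rate $\xi^2$ in the low-frequency regime, while (3)--(4) capture damped oscillation in the high-frequency regime.

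For (1) and (2), the key elementary inequality is
\[
\lambda_+ = \frac{-2\xi^2}{1+\sqrt{1-4\xi^2}} \in [-2\xi^2,-\xi^2],
\]
so that $e^{\lambda_+ t}\le e^{-\xi^2 t}$ on $|\xi|\le 1/2$, while $e^{\lambda_- t}\le e^{-t/2}$ is exponentially small. To control the degeneracy of the denominator of $\widehat{K_1}$ at $|\xi|=1/2$, I would split the region into $|\xi|\le 1/4$ (where $\sqrt{1/4-\xi^2}$ is bounded below, giving $|\widehat{K_1}|\lesssim e^{\lambda_+ t}\lesssim e^{-\xi^2 t}$) and $1/4\le |\xi|\le 1/2$ (where $\lambda_+\le -1/16$ and the mean-value estimate $|e^{\lambda_+ t}-e^{\lambda_- t}|\le(\lambda_+-\lambda_-)t\,e^{\lambda_+ t}$ gives $|\widehat{K_1}|\lesssim t\,e^{-t/16}$). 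Assertion (1) then reduces to the Gaussian-scaling identity
\[
\big\||\xi|^\alpha e^{-\xi^2 t}\big\|_{L^q_\xi(\mathbb{R})}\lesssim t^{-\frac12(\alpha+1/q)},
\]
obtained by substituting $u=\xi\sqrt{t}$, with the $1/4\le|\xi|\le 1/2$ contribution being exponentially small and therefore negligible. For (2) I would differentiate directly, $\pp_t\widehat{K_0}=\tfrac12(\lambda_+ e^{\lambda_+ t}+\lambda_- e^{\lambda_- t})$ and similarly for $\widehat{K_1}$; combined with $|\lambda_+|\lesssim\xi^2$ and $|\lambda_-|\le 1$, this produces $|\pp_t\widehat{K_i}(t,\xi)|\lesssim \xi^2 e^{-\xi^2 t}+e^{-t/2}$, and the same scaling with $\alpha=2-\beta$ yields the claimed exponent $-1-(1/q-\beta)/2$ from the principal term.

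For the high-frequency assertions (3) and (4), substituting $\sqrt{1/4-\xi^2}=i\sqrt{\xi^2-1/4}$ gives the oscillatory representations
\[
\widehat{K_0}(t,\xi)=e^{-t/2}\cos(t\sqrt{\xi^2-1/4}),\qquad \widehat{K_1}(t,\xi)=\frac{e^{-t/2}\sin(t\sqrt{\xi^2-1/4})}{\sqrt{\xi^2-1/4}},
\]
from which (3) is immediate, using $|\sin z|\le |z|$ to absorb the apparent singularity at $|\xi|=1/2$. For (4), differentiating in $t$ brings down an extra factor of $\sqrt{\xi^2-1/4}\lesssim\langle\xi\rangle$ in $\pp_t\widehat{K_0}$ (which is the origin of the $\langle\xi\rangle^{-1}$ weight in the statement), while in $\pp_t\widehat{K_1}$ the analogous factor cancels against the denominator, leaving the clean bound $|\pp_t\widehat{K_1}|\lesssim e^{-t/2}$. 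I expect the main technical care to be in the transition region $|\xi|$ near $\tfrac12$, where the naive formulas for $\widehat{K_1}$ and $\pp_t\widehat{K_1}$ have $0/0$ form; the sinh/sine rewriting (or equivalently the mean-value bound above) shows these quantities are actually entire in $\xi$ and that all bounds extend uniformly through the transition, after which what remains is routine Gaussian-scaling bookkeeping.
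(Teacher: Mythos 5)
Your proposal is correct and follows essentially the same route as the paper: split into $|\xi|\le\tfrac12$ (real roots, heat-kernel comparison via $\lambda_+\le -\xi^2$) and $|\xi|>\tfrac12$ (complex conjugate roots with uniform damping $e^{-t/2}$), then Gaussian scaling for the low-frequency $L^q$ bounds. The only substantive difference is that you handle the apparent $0/0$ degeneracy of $\widehat{K_1}$ near $|\xi|=\tfrac12$ explicitly (via the sinh/mean-value rewriting and a further split at $|\xi|=\tfrac14$), whereas the paper simply asserts $0\le\widehat{K_1}(t,\xi)\le e^{-t\xi^2}$ without comment; your version is the more careful one, and your observation that the extra factor of $t$ there is killed by the exponential smallness of $e^{-\xi^2 t}$ for $|\xi|\ge\tfrac14$ is precisely the point the paper leaves implicit.
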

\begin{proof} Since the decay properties of the operators $K_0,K_1$ are distinct between low
and high frequencies, we will split its frequency into the following two parts:
$$
|\xi|\le \frac12;\qquad |\xi|> \frac12.
$$
In the following, we will analyze the two parts separately.

\vskip .1in
Case I:  $|\xi|\le \frac12.$

According to the expressions of $K_0,\,K_1$ as follows
$$
\widehat{K_0}(t,\xi)=\frac12\Big(e^{\big(-\frac12+\sqrt{\frac14-\xi^2}\big)t}+e^{\big(-\frac12-\sqrt{\frac14-\xi^2}\big)t}\Big)
$$
and
$$
\widehat{K_1}(t,\xi)=\frac1{2\sqrt{\frac14-\xi^2}}\Big(e^{(-\frac12+\sqrt{\frac14-\xi^2})t}-e^{(-\frac12-\sqrt{\frac14-\xi^2})t}\Big),
$$
we obtain, for $|\xi|\le \frac12$,
$$
-\frac12+\sqrt{\frac14-\xi^2}=\frac{-\xi^2}{\frac12+\sqrt{\frac14-\xi^2}}\le -\xi^2;\quad
-\frac12-\sqrt{\frac14-\xi^2}\le -\xi^2.
$$
Then we get
$$
0\le \widehat{K_0}(t,\xi)\le e^{-t\xi^2}.
$$
Therefore,
\begin{align*}
|\xi|^\alpha\widehat {K_0}(t,\xi)&\le |\xi|^\alpha e^{-t\xi^2} \lesssim \langle t\rangle^{-\frac\alpha2};\\
\big\||\xi|^\alpha\widehat {K_0}(t,\xi)\big\|_{L^1_\xi(|\xi|\le \frac12)}&\le \int_{|\xi|\le \frac12} |\xi|^\alpha\widehat {K_0}(t,\xi)\,d\xi\le \int_{|\xi|\le \frac12} |\xi|^\alpha e^{-t\xi^2}\,d\xi
\lesssim \langle t\rangle^{-\frac12(1+\alpha)}.
\end{align*}
So the estimate 1) of Lemma \ref{lem:estK0K1} for $K_0$ follows from interpolation.
Moreover,
\begin{align}
\dot{K_0}(t)=&-\frac12K_0(t)+\big(\frac14+\partial_{xx}\big) K_1(t);\label{dotK0}\\
\dot{K_1}(t)=&K_0(t)-\frac12 K_1(t).\label{dotK1}
\end{align}
Thus,
\begin{align}\label{dotK11}
\partial_t \widehat{K_0}(t,\xi)=&-\frac12\widehat{K_0}(t,\xi)+\big(\frac14-\xi^2\big) \widehat{K_1}(t,\xi)\notag\\
=&\frac14\big(\sqrt{1-4\xi^2}-1\big)e^{(-\frac12+\sqrt{\frac14-\xi^2})t}-
\frac14\big(\sqrt{1-4\xi^2}+1\big)e^{(-\frac12-\sqrt{\frac14-\xi^2})t}.
\end{align}
So we have for any $0\le \beta \le 2$,
$$
\big||\xi|^{-\beta}\partial_t \widehat{K_0}(t,\xi)\big|\lesssim |\xi|^{2-\beta}e^{-t\xi^2}.
$$
Rough speaking, $\partial_t \widehat{K_0}(t,\xi)\sim \xi^2\widehat{K_0}(t,\xi)$, and thus we have
$$
\big\||\xi|^{-\beta}\partial_t\widehat {K_0}(t,\cdot)\big\|_{L^q_\xi(|\xi|\le \frac12)}\lesssim \langle t\rangle^{-1-\frac 12(\frac1{q}-\beta)}.
$$
Similarly, we can obtain
$$
0\le \widehat{K_1}(t,\xi)\le e^{-t\xi^2};\quad \big|\partial_t \widehat{K_1}(t,\xi)\big|\lesssim \xi^2e^{-t\xi^2}.
$$
Thus for $K_1$, we have the same estimates as $K_0$.
Hence, we deduce the estimates 1) and 2) of Lemma \ref{lem:estK0K1}.

\vskip .1in
Case II:   $|\xi|>\frac12.$

For this case, the expressions of  $K_0,\,K_1$ can be written as
$$
\widehat{K_0}(t,\xi)=\frac12\Big(e^{\big(-\frac12+i\sqrt{\xi^2-\frac14}\big)t}
+e^{\big(-\frac12-i\sqrt{\xi^2-\frac14}\big)t}\Big)
$$
and
$$
\widehat{K_1}(t,\xi)=\frac1{2i\sqrt{\xi^2-\frac14}}
\Big(e^{(-\frac12+i\sqrt{\xi^2-\frac14})t}-e^{(-\frac12-i\sqrt{\xi^2-\frac14})t}\Big).
$$
By virtue of the expression of $K_0,\,K_1$, then we get, for any $|\xi|>\frac12$,
$$
\big|\widehat {K_0}(t,\xi)\big|, \,\big|\widehat {K_1}(t,\xi)\big|\lesssim e^{-\frac12t},
$$
Further, by \eqref{dotK1} and \eqref{dotK11}, we also have
$$
\big|\langle \xi \rangle^{-1}\pp_t\widehat{K_0}(t, \xi)\big|, \big|\partial_t\widehat {K_1}(t,\xi)\big|\lesssim e^{-\frac12t}.
$$
Hence we complete the proof of Lemma \ref{lem:estK0K1}.
\end{proof}

\subsection{Tool lemmas}
To prove \eqref{con1}, we need several lemmas. In the following, $\mathcal{S}$
denotes the Schwartz class and $\|f\|_{L^p_x L^q_y} \equiv \|\|f\|_{L^q_y}\|_{L^p_x}$.

\begin{lemma}\label{lem:tool1}
Let $K(t,\partial_x)$ denote a Fourier multiplier operator with
$$
\big\|\widehat{K}(t,\xi)\big\|_{L^1_\xi} < \infty.
$$
Then, for any space-time Schwartz function $f$,
\begin{align}\label{tool1}
\big\|K(t,\partial_x)f\big\|_{L^\infty_{xy}}\lesssim \big\|\widehat{K}(t,\xi)\big\|_{L^1_\xi}\big\|\partial_y f\big\|_{L^1_{xy}}.
\end{align}
\end{lemma}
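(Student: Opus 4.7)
The plan is to decouple the $x$-direction (where the Fourier multiplier acts) from the $y$-direction (which only experiences the estimate through the fundamental theorem of calculus). Writing $\mathcal{K}(t,\cdot)$ for the $1$D inverse Fourier transform in $x$ of $\widehat{K}(t,\xi)$, the operator acts as a convolution in $x$ alone:
\[
K(t,\partial_x) f(x,y) = \int_{\mathbb{R}} \mathcal{K}(t, x-x')\, f(x',y)\, dx'.
\]
Since $\widehat{K}(t,\cdot)\in L^1_\xi$, Fourier inversion yields $\|\mathcal{K}(t,\cdot)\|_{L^\infty_x}\lesssim \|\widehat{K}(t,\cdot)\|_{L^1_\xi}$, so Young's convolution inequality gives, for each fixed $y$,
\[
\|K(t,\partial_x)f(\cdot,y)\|_{L^\infty_x}\;\lesssim\;\|\widehat{K}(t,\cdot)\|_{L^1_\xi}\;\|f(\cdot,y)\|_{L^1_x}.
\]

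The second step is to absorb the missing $y$-derivative via the fundamental theorem of calculus: since $f$ is Schwartz, $f(x,y)=\int_{-\infty}^{y}\partial_y f(x,y')\,dy'$, hence
\[
|f(x,y)|\;\le\;\int_{\mathbb{R}}|\partial_y f(x,y')|\,dy'.
\]
Integrating in $x$ and taking the supremum over $y$ yields
\[
\sup_y \|f(\cdot,y)\|_{L^1_x}\;\le\;\|\partial_y f\|_{L^1_{xy}}.
\]

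Combining the two displays, I take the supremum over $y$ on the left in the Young estimate:
\[
\|K(t,\partial_x) f\|_{L^\infty_{xy}}=\sup_y\|K(t,\partial_x)f(\cdot,y)\|_{L^\infty_x}\;\lesssim\;\|\widehat{K}(t,\cdot)\|_{L^1_\xi}\;\sup_y\|f(\cdot,y)\|_{L^1_x}\;\lesssim\;\|\widehat{K}(t,\cdot)\|_{L^1_\xi}\;\|\partial_y f\|_{L^1_{xy}},
\]
which is exactly \eqref{tool1}. There is no real obstacle here; the one conceptual point worth emphasizing is that because $K(t,\partial_x)$ only involves $x$-frequencies, a gain in the $y$-integrability of $f$ is available for free through the one-dimensional Sobolev-type embedding $\|f(x,\cdot)\|_{L^\infty_y}\le \|\partial_y f(x,\cdot)\|_{L^1_y}$, which is precisely what allows the mixed $L^1_x L^\infty_y$ norm on the right-hand side of Young's inequality to be replaced by the cleaner $L^1_{xy}$-norm of $\partial_y f$ after swapping the order of estimation.
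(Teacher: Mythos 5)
Your proof is correct and amounts to the same elementary argument as the paper's: a one‑dimensional $L^1$‑multiplier (equivalently, $\|\mathcal{K}*_x g\|_{L^\infty_x}\le\|\widehat K\|_{L^1_\xi}\|g\|_{L^1_x}$) in $x$ combined with the one‑dimensional embedding $\|h\|_{L^\infty_y}\le\|\partial_y h\|_{L^1_y}$ in $y$. The only difference is cosmetic: you apply Young's inequality in $x$ first and then absorb the $y$‑derivative via the fundamental theorem of calculus applied to $f$, whereas the paper differentiates $K f$ in $y$ first and then uses a Minkowski interchange before the Fourier‑side estimates, so the two proofs are just the same steps in a slightly different order.
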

\begin{proof}
For any $g\in \mathcal{S}(\R)$, we have
\begin{align}\label{infty-1}
\|g\|_{L^\infty(\R)}\lesssim \|g'\|_{L^1(\R)}.
\end{align}
Then, by the inequality above and Young's inequality, we have
\begin{align*}
\big\|K(t,\partial_x)f\big\|_{L^\infty_{xy}}
&\lesssim
\big\|K(t,\partial_x)\partial_y f\big\|_{L^\infty_{x}L^1_y}\\
&\lesssim
\big\|K(t,\partial_x)\partial_y f\big\|_{L^1_yL^\infty_{x}}\\
&\lesssim
\Big\|\big\|\widehat{K}(t,\xi)\mathcal F_\xi\big(\partial_y f)(t,\xi,y)\big\|_{L^1_{\xi}}\Big\|_{L^1_{y}}\\
&\lesssim
\big\|\widehat{K}(t,\xi)\big\|_{L^1_\xi}\left\|\big\|\mathcal F_\xi\big(\partial_y f)(t,\xi,y)\big\|_{L^\infty_{\xi}}\right\|_{L^1_{y}}\\
&\lesssim
\big\|\widehat{K}(t,\xi)\big\|_{L^1_\xi}\left\|\big\|\partial_y f(t,x,y)\big\|_{L^1_{x}}\right\|_{L^1_{y}}\\
&=
 \big\|\widehat{K}(t,\xi)\big\|_{L^1_\xi}\big\|\partial_y f\big\|_{L^1_{xy}}.
\end{align*}
This proves Lemma \ref{lem:tool1}.
\end{proof}

\begin{lemma}\label{lem:tool2}
Assume that  $\big\|\widehat{K}(t,\cdot)\big\|_{L^\infty}$  is bounded. Then, for any space-time Schwartz function $f$, and any $\epsilon>0$,
\begin{align}\label{tool2}
\big\|K(t,\partial_x)f\big\|_{L^\infty_{xy}}\lesssim \big\|\widehat{K}(t,\xi)\big\|_{L^\infty_\xi}
\big\|\partial_y \langle\nabla\rangle^{1+\epsilon} f\big\|_{L^1_{xy}}.
\end{align}
\end{lemma}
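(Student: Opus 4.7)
The plan is to follow the same template as the proof of Lemma~\ref{lem:tool1}, modifying only the Hölder pairing on the Fourier side so as to pull out $\|\widehat{K}\|_{L^\infty_\xi}$ rather than $\|\widehat{K}\|_{L^1_\xi}$. The complementary factor then becomes an $L^1_\xi$-norm (instead of an $L^\infty_\xi$-norm), which must be paid for with horizontal regularity on $f$.

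First I would invoke $\|g\|_{L^\infty_y}\lesssim\|\partial_y g\|_{L^1_y}$ together with Minkowski's integral inequality, exactly as in Lemma~\ref{lem:tool1}, to reduce the problem to bounding $\|K(t,\partial_x)\partial_y f\|_{L^1_y L^\infty_x}$. For each fixed $y$, Fourier inversion in $x$ and Hölder then give
\begin{align*}
\|K(t,\partial_x)\partial_y f(\cdot,y)\|_{L^\infty_x}
\lesssim \|\widehat{K}\|_{L^\infty_\xi}\,\|\mathcal F_x(\partial_y f)(\cdot,y)\|_{L^1_\xi},
\end{align*}
and Cauchy--Schwarz with the weight $\langle\xi\rangle^{-1/2-\delta}\in L^2_\xi$ (for any $\delta>0$) controls the $L^1_\xi$-factor by $\|\langle\partial_x\rangle^{1/2+\delta}\partial_y f(\cdot,y)\|_{L^2_x}$. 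Integrating in $y$ yields
$$
\|K(t,\partial_x)f\|_{L^\infty_{xy}}\lesssim \|\widehat{K}\|_{L^\infty_\xi}\,\|\langle\partial_x\rangle^{1/2+\delta}\partial_y f\|_{L^1_y L^2_x}.
$$
To turn the mixed $L^1_y L^2_x$-norm into the $L^1_{xy}$-norm of the statement, one would then apply the one-dimensional Gagliardo--Nirenberg inequality $\|h(\cdot,y)\|_{L^2_x}^2\lesssim \|h(\cdot,y)\|_{L^1_x}\|\partial_x h(\cdot,y)\|_{L^1_x}$ fibrewise in $y$, Cauchy--Schwarz in $y$, and AM--GM, and finally the estimate $\langle\partial_x\rangle^s\lesssim\langle\nabla\rangle^s$.

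The main obstacle is the horizontal-derivative bookkeeping: a naive execution of the Cauchy--Schwarz and Gagliardo--Nirenberg steps above accumulates on the order of $\tfrac{3}{2}+\delta$ horizontal derivatives on $\partial_y f$, while the statement permits only $1+\epsilon$. Closing this half-derivative gap is the most delicate part of the argument. A natural way to do so is to combine the two-dimensional Sobolev embedding $H^{1+\epsilon}(\R^2)\hookrightarrow L^\infty(\R^2)$ with Plancherel, obtaining $\|K(t,\partial_x)f\|_{L^\infty_{xy}}\lesssim\|\widehat{K}\|_{L^\infty_\xi}\|\langle\nabla\rangle^{1+\epsilon}f\|_{L^2_{xy}}$, and then to invoke a Loomis--Whitney-type inequality $\|g\|_{L^2_{xy}}^2\lesssim\|\partial_x g\|_{L^1_{xy}}\|\partial_y g\|_{L^1_{xy}}$ with $g=\langle\nabla\rangle^{1+\epsilon}f$, followed by AM--GM; I expect the final proof to hinge on precisely such a rearrangement of the 1D and 2D Sobolev inputs.
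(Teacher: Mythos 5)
Your first route reaches the same intermediate quantity the paper reaches: after 1D Sobolev in $y$, Minkowski, Fourier inversion, H\"older, and Cauchy--Schwarz against the weight $\langle\xi\rangle^{-\frac12-\delta}$, you arrive at
\[
\|K(t,\partial_x)f\|_{L^\infty_{xy}}\lesssim \|\widehat K\|_{L^\infty_\xi}\,\big\|\langle\partial_x\rangle^{\frac12+\delta}\partial_y f\big\|_{L^1_y L^2_x},
\]
and the paper's argument (which instead uses $H^{1/2+}_x\hookrightarrow L^\infty_x$ and then Plancherel in $\xi$ to extract $\|\widehat K\|_{L^\infty_\xi}$) is only a reordering of these same moves. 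The only place your first route breaks is the conversion from $L^1_y L^2_x$ to $L^1_{xy}$. The right tool there is not the 1D Gagliardo--Nirenberg inequality but the Hausdorff--Young/Bernstein bound, applied fibrewise in $y$:
\[
\|h(\cdot,y)\|_{L^2_x}=\|\widehat h(\cdot,y)\|_{L^2_\xi}\le \big\|\langle\xi\rangle^{-\frac12-\delta}\big\|_{L^2_\xi}\,\big\|\langle\xi\rangle^{\frac12+\delta}\widehat h(\cdot,y)\big\|_{L^\infty_\xi}\lesssim\big\|\langle\partial_x\rangle^{\frac12+\delta}h(\cdot,y)\big\|_{L^1_x}.
\]
This is exactly the Cauchy--Schwarz-against-$\langle\xi\rangle^{-\frac12-\delta}$ trick you already used, now with the complementary H\"older pair $(L^2_\xi,L^\infty_\xi)$ and the trivial inequality $\|\widehat h\|_{L^\infty_\xi}\le\|h\|_{L^1_x}$. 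It costs only a further $\frac12+\delta$ horizontal derivatives, so after integrating in $y$ the total is $\langle\partial_x\rangle^{1+2\delta}$; take $\delta=\epsilon/2$ and replace $\langle\partial_x\rangle$ by $\langle\nabla\rangle$ to obtain \eqref{tool2}. There is no half-derivative gap: your count of $\frac32+\delta$ is an artifact of using $\|h\|_{L^2}^2\lesssim\|h\|_{L^1}\|\partial_x h\|_{L^1}$, which spends a full $x$-derivative where Bernstein needs only $\frac12+$.

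Your proposed repair via 2D Sobolev and Loomis--Whitney, by contrast, does not prove the stated lemma. Applying $\|g\|_{L^2_{xy}}^2\lesssim\|\partial_x g\|_{L^1_{xy}}\|\partial_y g\|_{L^1_{xy}}$ with $g=\langle\nabla\rangle^{1+\epsilon}f$ and then AM--GM gives a bound by $\|\partial_x\langle\nabla\rangle^{1+\epsilon}f\|_{L^1_{xy}}+\|\partial_y\langle\nabla\rangle^{1+\epsilon}f\|_{L^1_{xy}}$, whereas Lemma~\ref{lem:tool2} asserts control by the $\partial_y$ term alone. That asymmetry is essential: in this paper $x$-derivatives are the expensive commodity (they buy the time decay of the kernels through Lemma~\ref{lem:estK0K1}), while a single free $\partial_y$ on $f$ is cheap, and a version that charges an extra $\partial_x$ to $f$ would degrade Corollary~\ref{cor:tool12} and the nonlinear estimates downstream. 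So drop the Loomis--Whitney detour and stay with your first route, with the $L^2_x\to L^1_x$ step corrected as above.
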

\begin{proof}
By \eqref{infty-1}, Sobolev's inequality and Plancherel's identity, we have
\begin{align*}
\big\|K(t,\partial_x)f\big\|_{L^\infty_{xy}}
&\lesssim
\big\|K(t,\partial_x)\partial_y f\big\|_{L^\infty_{x}L^1_y}\\
&\lesssim
\big\|K(t,\partial_x)\partial_y \langle\nabla\rangle^{\frac12+\frac\epsilon2}f\big\|_{L^1_yL^2_{x}}\\
&=
\Big\|\big\|\widehat{K}(t,\xi)\mathcal F_\xi\big(\partial_y \langle\nabla\rangle^{\frac12+\frac\epsilon2} f)(t,\xi,y)\big\|_{L^2_{\xi}}\Big\|_{L^1_{y}}\\
&\lesssim
\big\|\widehat{K}(t,\xi)\big\|_{L^\infty_\xi}\left\|\big\|\mathcal F_\xi\big(\partial_y\langle\nabla\rangle^{\frac12+\frac\epsilon2} f)(t,\xi,y)\big\|_{L^2_{\xi}}\right\|_{L^1_{y}}\\
&=
\big\|\widehat{K}(t,\xi)\big\|_{L^\infty_\xi}\left\|\big\|\partial_y\langle\nabla\rangle^{\frac12+\frac\epsilon2} f(t,x,y)\big\|_{L^2_{x}}\right\|_{L^1_{y}}\\
&\lesssim
\big\|\widehat{K}(t,\xi)\big\|_{L^\infty_\xi}\big\|\partial_y \langle\nabla\rangle^{1+\epsilon} f\big\|_{L^1_{xy}}.
\end{align*}
This proves Lemma \ref{lem:tool2}.
\end{proof}

As a special consequence of Lemmas \ref{lem:tool1} and \ref{lem:tool2}, we have the following corollary.
\begin{cor} \label{cor:tool12}
Let $K(t,\partial_x)$ be a Fourier multiplier operator satisfying
$$
\big\|\widehat{\partial_x^\alpha K}(t,\xi)\big\|_{L^1_\xi(|\xi|\le\frac12)} <\infty, \quad
\big\|\widehat{K}(t,\xi)\big\|_{L^\infty_\xi(|\xi|\ge\frac12)} <\infty, \quad \alpha\ge 0.
$$
Then, for any space-time Schwartz function $f$,
\begin{align}\label{es:infty}
\big\|\partial_x^\alpha K(t,\partial_x)f\big\|_{L^\infty_{xy}}\lesssim &\Big(\big\|\widehat{\partial_x^\alpha K}(t,\xi)\big\|_{L^1_\xi(|\xi|\le\frac12)}
+\big\|\widehat{K}(t,\xi)\big\|_{L^\infty_\xi(|\xi|\ge\frac12)}\Big) \nonumber \\
& \times \big\|\langle\nabla\rangle^{\alpha+1+\epsilon}\partial_y f\big\|_{L^1_{xy}}.
\end{align}
\end{cor}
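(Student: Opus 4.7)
The plan is to prove Corollary \ref{cor:tool12} by a standard low/high frequency decomposition of the Fourier multiplier, applying Lemma \ref{lem:tool1} to the low-frequency piece and Lemma \ref{lem:tool2} to the high-frequency piece. More precisely, I would write $\partial_x^\alpha K(t,\partial_x) = A_L + A_H$ with Fourier symbols $\widehat{A_L}(\xi) = (i\xi)^\alpha\widehat{K}(t,\xi)\,\mathbf{1}_{\{|\xi|\le 1/2\}}$ and $\widehat{A_H}(\xi) = (i\xi)^\alpha\widehat{K}(t,\xi)\,\mathbf{1}_{\{|\xi|>1/2\}}$, so that $\|\widehat{A_L}\|_{L^1_\xi}$ and $\|\widehat{A_H}/(i\xi)^\alpha\|_{L^\infty_\xi}$ are exactly the two quantities appearing in the hypothesis.

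For the low-frequency piece $A_L$, an application of Lemma \ref{lem:tool1} directly yields
\[
\|A_L f\|_{L^\infty_{xy}} \lesssim \|\widehat{\partial_x^\alpha K}\|_{L^1_\xi(|\xi|\le 1/2)}\,\|\partial_y f\|_{L^1_{xy}}.
\]
For the high-frequency piece, the key maneuver is to commute $\partial_x^\alpha$ off the operator and onto $f$, writing $A_H f = K_H(t,\partial_x)\,\partial_x^\alpha f$ where $K_H$ has symbol $\widehat K \mathbf{1}_{\{|\xi|>1/2\}}$. Since this symbol is bounded by the second hypothesis, Lemma \ref{lem:tool2} gives
\[
\|A_H f\|_{L^\infty_{xy}} \lesssim \|\widehat K\|_{L^\infty_\xi(|\xi|\ge 1/2)}\,\|\partial_y\langle\nabla\rangle^{1+\epsilon}\partial_x^\alpha f\|_{L^1_{xy}}.
\]

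To finish, both right-hand sides need to be absorbed into the single norm $\|\langle\nabla\rangle^{\alpha+1+\epsilon}\partial_y f\|_{L^1_{xy}}$. For this, I would rely on the fact that for any $s>0$ the Bessel potential $\langle\nabla\rangle^{-s}$ is convolution with an $L^1(\mathbb{R}^2)$ kernel, so Young's convolution inequality gives $\|g\|_{L^1_{xy}}\lesssim \|\langle\nabla\rangle^s g\|_{L^1_{xy}}$. Applying this with $g=\partial_y f$ and $s=\alpha+1+\epsilon$ handles the low-frequency bound, while for the high-frequency bound it suffices to write $\partial_x^\alpha\langle\nabla\rangle^{1+\epsilon} = \langle\nabla\rangle^{\alpha+1+\epsilon}\cdot M$ where $M$ has symbol $(i\xi)^\alpha/\langle\xi,\eta\rangle^\alpha$, absorb the bounded factor, and again invoke the Bessel-potential $L^1$ bound. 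Summing the two pieces then produces \eqref{es:infty}.

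The only mildly delicate step is the last one, namely moving the symbol $(i\xi)^\alpha$ past an $L^1$-norm; since Fourier multipliers are not bounded on $L^1$ in general, one cannot do this cavalierly. The route through Bessel potentials (whose convolution kernels are genuinely integrable for any positive order) is what makes the comparison legitimate, and this is where I expect the main technical care of the proof to be spent. Everything else reduces to bookkeeping on the two preceding lemmas.
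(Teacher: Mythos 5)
Your overall strategy --- low/high frequency splitting, Lemma~\ref{lem:tool1} on the low-frequency piece, Lemma~\ref{lem:tool2} applied to $K_H$ acting on $\partial_x^\alpha f$ --- is the right one, and it is almost certainly what the paper intends when it calls the corollary ``a special consequence'' of those two lemmas. Your low-frequency step and the Bessel-potential comparison $\|\partial_y f\|_{L^1_{xy}}\lesssim\|\langle\nabla\rangle^{\alpha+1+\epsilon}\partial_y f\|_{L^1_{xy}}$ are both correct.

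The gap is in the closing step of your high-frequency argument, and it is precisely the step you yourself flagged as delicate. You write $\partial_x^\alpha\langle\nabla\rangle^{1+\epsilon}=\langle\nabla\rangle^{\alpha+1+\epsilon}\,M$ with $M$ of symbol $(i\xi)^\alpha/\langle(\xi,\eta)\rangle^\alpha$ and propose to ``absorb the bounded factor'' and ``again invoke the Bessel-potential $L^1$ bound.'' But $M$ is not a Bessel potential of positive order, and a bounded symbol does not imply $L^1$-boundedness. The symbol of $M$ is homogeneous of degree zero at infinity, so $M$ is Riesz-transform--like: for $\alpha\ge1$ its convolution kernel has a non-integrable $|z|^{-2}$ tail, and applying $M$ to an $L^1$ bump already produces a function decaying like $|z|^{-2}$, which is not in $L^1(\mathbb R^2)$. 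So the asserted absorption fails, and the Bessel-potential device does not apply because the relevant multiplier is not of Bessel-potential type.

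The repair is to not spend all of the $\epsilon$ at once. Apply Lemma~\ref{lem:tool2} to $K_H$ acting on $\partial_x^\alpha f$ with the strictly smaller parameter $\epsilon/2$:
\begin{align*}
\|A_H f\|_{L^\infty_{xy}}\lesssim\|\widehat K\|_{L^\infty_\xi(|\xi|\geq 1/2)}\,\big\|\partial_y\langle\nabla\rangle^{1+\epsilon/2}\partial_x^\alpha f\big\|_{L^1_{xy}}.
\end{align*}
Now $\partial_y\langle\nabla\rangle^{1+\epsilon/2}\partial_x^\alpha f=\tilde M\,\langle\nabla\rangle^{\alpha+1+\epsilon}\partial_y f$, where $\tilde M=\partial_x^\alpha\langle\nabla\rangle^{-\alpha-\epsilon/2}$ has symbol $(i\xi)^\alpha/\langle(\xi,\eta)\rangle^{\alpha+\epsilon/2}$, a symbol of strictly negative order $-\epsilon/2$. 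For $\alpha$ a non-negative integer --- the only case the paper uses --- the symbol is smooth, its kernel is $O(|z|^{-2+\epsilon/2})$ near the origin and rapidly decaying at infinity, hence in $L^1(\mathbb R^2)$; thus $\tilde M$ is bounded on $L^1$ and the estimate closes. The same trick (use $\alpha+\epsilon$ Bessel derivatives to control $\partial_x^\alpha$ composed with $\alpha+\epsilon/2$ Bessel derivatives) is what really makes the harmless-looking passage from explicit $x$-derivatives to Bessel derivatives legitimate.
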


\begin{lemma}\label{lem:tool3}
Assume that  $\big\|\widehat{K}(t,\cdot)\big\|_{L^2}$ is bounded. Then, for any space-time Schwartz function $f$, and any $\epsilon>0$,
\begin{align}\label{tool3}
\big\|K(t,\partial_x)f\big\|_{L^2_{xy}}\lesssim \big\|\widehat{K}(t,\xi)\big\|_{L^2_\xi}\big\||\nabla|^{\frac12-\epsilon}\langle\nabla\rangle^{2\epsilon} f\big\|_{L^1_{xy}}.
\end{align}
\end{lemma}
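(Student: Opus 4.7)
The plan is to pass to the Fourier side via Plancherel and then bound $\hat{f}(\xi,\eta)$ pointwise using the elementary Hausdorff--Young estimate applied to $|\nabla|^{\frac12-\epsilon}\langle\nabla\rangle^{2\epsilon} f$. The remaining work reduces to a uniform-in-$\xi$ bound for a one-dimensional integral in $\eta$, which is precisely where the calibrated exponents $\tfrac12-\epsilon$ and $2\epsilon$ pay off.

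First, because $K(t,\partial_x)$ is a Fourier multiplier acting in $x$ only, Plancherel in both variables gives
\[
\|K(t,\partial_x) f\|_{L^2_{xy}}^2 = \iint_{\mathbb{R}^2} |\widehat K(t,\xi)|^2\, |\hat f(\xi,\eta)|^2 \, d\xi\,d\eta.
\]
Next, the basic Fourier bound $|\widehat{g}(\xi,\eta)|\le \|g\|_{L^1_{xy}}$ applied to $g = |\nabla|^{\frac12-\epsilon}\langle\nabla\rangle^{2\epsilon} f$ yields the pointwise dual estimate
\[
|\hat f(\xi,\eta)| \;\le\; \frac{\big\||\nabla|^{\frac12-\epsilon}\langle\nabla\rangle^{2\epsilon} f\big\|_{L^1_{xy}}}{|(\xi,\eta)|^{\frac12-\epsilon}\langle(\xi,\eta)\rangle^{2\epsilon}}.
\]
Substituting factors out the $L^1$ norm on the right and reduces the claim to the multiplier bound
\[
\iint_{\mathbb{R}^2} \frac{|\widehat K(t,\xi)|^2}{|(\xi,\eta)|^{1-2\epsilon}\,\langle(\xi,\eta)\rangle^{4\epsilon}} \, d\xi\,d\eta \;\lesssim\; \|\widehat K(t,\cdot)\|_{L^2_\xi}^2.
\]

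By Fubini (the integrand is nonnegative), it suffices to show
\[
J(\xi) := \int_{\mathbb{R}} \frac{d\eta}{|(\xi,\eta)|^{1-2\epsilon}\,\langle(\xi,\eta)\rangle^{4\epsilon}} \;\lesssim\; 1
\]
uniformly in $\xi$, since then $\iint(\cdots) = \int |\widehat K(t,\xi)|^2 J(\xi)\,d\xi \lesssim \|\widehat K(t,\cdot)\|_{L^2_\xi}^2$. Splitting $\int d\eta$ into $|\eta|\le 1$ and $|\eta|>1$, the near-origin piece is controlled by $\int_{|\eta|\le 1} |\eta|^{-(1-2\epsilon)}\,d\eta \lesssim 1$ (integrable since $1-2\epsilon<1$), and the tail by $\int_{|\eta|>1} \eta^{-(1+2\epsilon)}\,d\eta \lesssim 1$; both bounds are uniform in $\xi$.

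The main technical point I expect is exactly this uniform bound on $J(\xi)$: the exponents $\tfrac12-\epsilon$ and $2\epsilon$ are precisely calibrated so that the low-frequency singular weight is locally integrable in one variable, while the Bessel factor $\langle\nabla\rangle^{2\epsilon}$ restores the necessary decay at infinity. Either ingredient alone would fail one of the two integrability conditions, which is why both the homogeneous and inhomogeneous pieces appear together in the hypothesis. In spirit the lemma mirrors Lemmas \ref{lem:tool1} and \ref{lem:tool2}, but Plancherel in both variables replaces the one-dimensional Sobolev embedding in $y$ used there.
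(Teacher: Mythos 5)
Your proof is correct, and it takes a genuinely different route from the paper's. The paper works with the partial Fourier transform in $x$ only: it applies Plancherel in $x$ and H\"older in $\xi$ to pull out $\|\widehat K\|_{L^2_\xi}$, then Hausdorff--Young in $x$, then Minkowski to pass from $L^2_y L^1_x$ to $L^1_x L^2_y$, and finally a Sobolev-type embedding $\|\,\|f\|_{L^2_y}\|_{L^1_x}\lesssim \||\nabla|^{\frac12-\epsilon}\langle\nabla\rangle^{2\epsilon}f\|_{L^1_{xy}}$. You instead apply Plancherel and Hausdorff--Young simultaneously in both variables, reducing everything to the uniform bound $J(\xi)=\int_{\mathbb{R}}|(\xi,\eta)|^{-(1-2\epsilon)}\langle(\xi,\eta)\rangle^{-4\epsilon}\,d\eta\lesssim 1$, which you verify by an elementary split at $|\eta|=1$. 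Your route has the advantage of being completely self-contained: the calibration of $\frac12-\epsilon$ and $2\epsilon$ is made explicit as a local-integrability-plus-decay condition on the $\eta$-integral, whereas the paper's final inequality is a one-dimensional Sobolev embedding in $y$ combined with a comparison between the one-dimensional operators $|\partial_y|, \langle\partial_y\rangle$ and the two-dimensional $|\nabla|, \langle\nabla\rangle$ in $L^1$, which is stated without proof and requires a bit more care to justify. The paper's structure, on the other hand, parallels its Lemmas \ref{lem:tool1} and \ref{lem:tool2} (keeping a physical $y$ variable throughout), which gives a uniform presentation across the four tool lemmas even if each step is slightly more involved. Both arguments are valid; yours is arguably the more transparent for this particular lemma.
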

\begin{proof}
By a similar manner, we have
\begin{align*}
\big\|K(t,\partial_x)f\big\|_{L^2_{xy}}
&=\Big\|\big\|\widehat{K}(t,\xi)\mathcal F_\xi f(t,\xi,y)\big\|_{L^2_{\xi}}\Big\|_{L^2_{y}}\\
&\lesssim
\big\|\widehat{K}(t,\xi)\big\|_{L^2_\xi}\left\|\big\|\mathcal F_\xi f(t,\xi,y)\big\|_{L^\infty_{\xi}}\right\|_{L^2_{y}}\\
&\lesssim
\big\|\widehat{K}(t,\xi)\big\|_{L^2_\xi}\left\|\big\|f(t,x,y)\big\|_{L^1_{x}}\right\|_{L^2_{y}}\\
&\lesssim
\big\|\widehat{K}(t,\xi)\big\|_{L^2_\xi}\left\|\big\|f(t,x,y)\big\|_{L^2_{y}}\right\|_{L^1_{x}}\\
&\lesssim
\big\|\widehat{K}(t,\xi)\big\|_{L^2_\xi}\big\||\nabla|^{\frac12-\epsilon}\langle\nabla\rangle^{2\epsilon} f\big\|_{L^1_{xy}}.
\end{align*}
This proves the lemma.
\end{proof}

\begin{lemma}\label{lem:tool4}
Assume that $\big\|\widehat{K}(t,\cdot)\big\|_{L^\infty}$ is bounded. Then, for any space-time Schwartz function $f$, and any $\epsilon>0$,
\begin{align}\label{tool4}
\big\|K(t,\partial_x)f\big\|_{L^2_{xy}}\lesssim \big\|\widehat{K}(t,\xi)\big\|_{L^\infty_\xi}\big\||\nabla|^{\frac12-\epsilon}\langle\nabla\rangle^{\frac12+2\epsilon} f\big\|_{L^1_{xy}}.
\end{align}
\end{lemma}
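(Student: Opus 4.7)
The proof mirrors that of Lemma~\ref{lem:tool3}, with the roles of $L^2_\xi$ and $L^\infty_\xi$ interchanged in the opening H\"older step. The trade-off is that I am now forced to control $\|f\|_{L^2_{xy}}$ (rather than the mixed $L^2_y L^1_x$ norm that appeared in Lemma~\ref{lem:tool3}) by an $L^1_{xy}$-norm with a fractional weight, which I plan to achieve via a weighted Cauchy--Schwarz inequality on the Fourier side.

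First, Plancherel's identity in $x$ (for each fixed $y$), combined with H\"older's inequality in $\xi$ placing $\widehat K$ in $L^\infty_\xi$ and $\mathcal F_\xi f(t,\xi,y)$ in $L^2_\xi$, yields
\begin{align*}
\bigl\|K(t,\partial_x)f\bigr\|_{L^2_{xy}}
&=\Bigl\|\bigl\|\widehat K(t,\xi)\,\mathcal F_\xi f(t,\xi,y)\bigr\|_{L^2_\xi}\Bigr\|_{L^2_y}\\
&\le \bigl\|\widehat K(t,\xi)\bigr\|_{L^\infty_\xi}\Bigl\|\bigl\|\mathcal F_\xi f(t,\xi,y)\bigr\|_{L^2_\xi}\Bigr\|_{L^2_y}
=\bigl\|\widehat K(t,\xi)\bigr\|_{L^\infty_\xi}\|f\|_{L^2_{xy}},
\end{align*}
the last equality being Plancherel in $x$ applied to $f(t,\cdot,y)$ for each $y$. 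The problem is thus reduced to proving the two-dimensional Sobolev-type estimate
\begin{align*}
\|f\|_{L^2_{xy}}\lesssim \bigl\||\nabla|^{\tfrac12-\epsilon}\la\na\ra^{\tfrac12+2\epsilon}f\bigr\|_{L^1_{xy}}.
\end{align*}

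To establish this Sobolev estimate I would pass to the Fourier side. Writing $\|f\|_{L^2_{xy}}^2=\|\widehat f\|_{L^2_{\xi\eta}}^2$ by Plancherel in both variables, inserting the weight $|(\xi,\eta)|^{1-2\epsilon}\la(\xi,\eta)\ra^{1+4\epsilon}$ and its reciprocal, and dominating the first factor by its $L^\infty_{\xi\eta}$-norm gives
\begin{align*}
\|f\|_{L^2_{xy}}^2\le \bigl\||(\xi,\eta)|^{\tfrac12-\epsilon}\la(\xi,\eta)\ra^{\tfrac12+2\epsilon}\widehat f\bigr\|_{L^\infty_{\xi\eta}}^{2}\int_{\R^2}\frac{d\xi\,d\eta}{|(\xi,\eta)|^{1-2\epsilon}\la(\xi,\eta)\ra^{1+4\epsilon}}.
\end{align*}
The weight integral equals $2\pi\int_0^\infty \rho^{2\epsilon}(1+\rho^2)^{-(1+4\epsilon)/2}\,d\rho$ in polar coordinates, which is finite because the integrand is $O(\rho^{2\epsilon})$ near the origin and $O(\rho^{-1-2\epsilon})$ at infinity. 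The Hausdorff--Young inequality $\|\widehat g\|_{L^\infty}\le\|g\|_{L^1}$ applied to $g=|\nabla|^{\tfrac12-\epsilon}\la\na\ra^{\tfrac12+2\epsilon}f$ then closes the argument. The only subtle point of the plan is precisely this convergence of the weight integral, which is exactly why the homogeneous exponent is kept strictly below $\tfrac12$ (to tame the $|(\xi,\eta)|^{-(1-2\epsilon)}$ singularity at the origin in $\R^2$) and the inhomogeneous exponent strictly above $\tfrac12$ (to secure decay at infinity); for this reason one cannot take $\epsilon=0$.
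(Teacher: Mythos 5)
Your proof is correct and follows the same reduction as the paper's: Plancherel in $x$ together with H\"older's inequality in $\xi$ give $\|K(t,\partial_x)f\|_{L^2_{xy}}\le\|\widehat K(t,\cdot)\|_{L^\infty_\xi}\|f\|_{L^2_{xy}}$, and the Sobolev-type embedding $\|f\|_{L^2_{xy}}\lesssim\||\nabla|^{\frac12-\epsilon}\la\na\ra^{\frac12+2\epsilon}f\|_{L^1_{xy}}$ then closes the argument. The only difference is that you justify this last embedding explicitly (Fourier side, weighted Cauchy--Schwarz, convergence of the polar integral, Hausdorff--Young), whereas the paper simply asserts it.
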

\begin{proof}
By a similar manner, we have
\begin{align*}
\big\|K(t,\partial_x)f\big\|_{L^2_{xy}}
&=\Big\|\big\|\widehat{K}(t,\xi)\mathcal F_\xi f(t,\xi,y)\big\|_{L^2_{\xi}}\Big\|_{L^2_{y}}\\
&\lesssim
\big\|\widehat{K}(t,\xi)\big\|_{L^\infty_\xi}\left\|\big\|\mathcal F_\xi f(t,\xi,y)\big\|_{L^2_{\xi}}\right\|_{L^2_{y}}\\
&=
\big\|\widehat{K}(t,\xi)\big\|_{L^\infty_\xi}\left\|\big\|f(t,x,y)\big\|_{L^2_{x}}\right\|_{L^2_{y}}\\
&\lesssim
\big\|\widehat{K}(t,\xi)\big\|_{L^\infty_\xi}\big\||\nabla|^{\frac12-\epsilon}\langle\nabla\rangle^{\frac12+2\epsilon} f\big\|_{L^1_{xy}}.
\end{align*}
This proves the lemma.
\end{proof}

Combining Lemmas \ref{lem:tool3} and \ref{lem:tool4}, we have
\begin{cor}\label{cor:too1}
Assume the Fourier multiplier operator $K(t,\partial_x)$ satisfies
$$
\big\|\widehat{\partial_x^\alpha K}(t,\xi)\big\|_{L^2_\xi(|\xi|\le\frac12)}<\infty,
\quad \big\|\widehat{K}(t,\xi)\big\|_{L^\infty_\xi(|\xi|\ge\frac12)} <\infty, \quad \alpha\ge 0.
$$
Then, for any space-time Schwartz function $f$ and any $\epsilon>0$,
\begin{align}
\big\|\partial_x^\alpha K(t,\partial_x)f\big\|_{L^2_{xy}}\lesssim&
\Big(\big\|\widehat{\partial_x^\alpha K}(t,\xi)\big\|_{L^2_\xi(|\xi|\le\frac12)}
+\big\|\widehat{K}(t,\xi)\big\|_{L^\infty_\xi(|\xi|\ge\frac12)}\Big) \nonumber\\
&\cdot\big\|\langle\nabla\rangle^{\alpha+\frac12+2\epsilon}|\nabla|^{\frac12-\epsilon} f\big\|_{L^1_{xy}}.
\end{align}
\end{cor}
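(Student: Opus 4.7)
The argument parallels that of Corollary \ref{cor:tool12}, with Lemmas \ref{lem:tool3} and \ref{lem:tool4} playing the roles of Lemmas \ref{lem:tool1} and \ref{lem:tool2}. The plan is to split the symbol $\widehat{K}(t,\xi)$ into low- and high-frequency pieces via a smooth partition of unity $1=\chi_L(\xi)+\chi_H(\xi)$, with $\chi_L$ supported in $|\xi|\le 1/2$, and correspondingly decompose
\[
\partial_x^\alpha K(t,\partial_x)f \;=\; M_L(t,\partial_x)\,f \;+\; K^H(t,\partial_x)\,\partial_x^\alpha f,
\]
where $M_L$ has symbol $(i\xi)^\alpha\widehat{K}(t,\xi)\chi_L(\xi)$ and $K^H$ has symbol $\widehat{K}(t,\xi)\chi_H(\xi)$. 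The factor $\partial_x^\alpha$ is absorbed into the low-frequency multiplier but kept as a separate operator on the high-frequency side; this accommodates the fact that the two hypotheses of the corollary control $\partial_x^\alpha \widehat{K}$ and $\widehat{K}$ in different norms.

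For $M_L$, Lemma \ref{lem:tool3} applied directly, combined with the support property of $\chi_L$, gives
\[
\|M_L f\|_{L^2_{xy}} \lesssim \bigl\|(i\xi)^\alpha\widehat{K}\chi_L\bigr\|_{L^2_\xi}\,\bigl\||\nabla|^{1/2-\epsilon}\langle\nabla\rangle^{2\epsilon} f\bigr\|_{L^1_{xy}} \lesssim \bigl\|\widehat{\partial_x^\alpha K}\bigr\|_{L^2_\xi(|\xi|\le 1/2)}\,\bigl\||\nabla|^{1/2-\epsilon}\langle\nabla\rangle^{\alpha+1/2+2\epsilon} f\bigr\|_{L^1_{xy}},
\]
where the last step only uses the trivial monotonicity $\langle\nabla\rangle^{2\epsilon}\le \langle\nabla\rangle^{\alpha+1/2+2\epsilon}$. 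For the high-frequency piece, Lemma \ref{lem:tool4} applied with multiplier $K^H$ to the input $\partial_x^\alpha f$ yields
\[
\|K^H \partial_x^\alpha f\|_{L^2_{xy}} \lesssim \bigl\|\widehat{K}\bigr\|_{L^\infty_\xi(|\xi|\ge 1/2)}\,\bigl\||\nabla|^{1/2-\epsilon}\langle\nabla\rangle^{1/2+2\epsilon}\partial_x^\alpha f\bigr\|_{L^1_{xy}}.
\]
Since $K^H$ has Fourier support in $|\xi|\ge 1/2$, one may pre-insert a smooth cutoff $\tilde\chi_H(\partial_x)$ that equals $1$ there; the operator $\partial_x^\alpha \tilde\chi_H(\partial_x)\langle\nabla\rangle^{-\alpha}$ then has a smooth, bounded symbol whose Fourier inverse in $x$ is integrable, hence it is bounded on $L^1_{xy}$ and so
\[
\bigl\||\nabla|^{1/2-\epsilon}\langle\nabla\rangle^{1/2+2\epsilon}\partial_x^\alpha f\bigr\|_{L^1_{xy}} \lesssim \bigl\||\nabla|^{1/2-\epsilon}\langle\nabla\rangle^{\alpha+1/2+2\epsilon} f\bigr\|_{L^1_{xy}}.
\]
Summing the two contributions produces the claimed inequality.

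The only mildly delicate point is the $L^1_{xy}$-boundedness of $\partial_x^\alpha\tilde\chi_H(\partial_x)\langle\nabla\rangle^{-\alpha}$, since Fourier multipliers are not generally bounded on $L^1$. For integer $\alpha$ this is elementary, by writing $\langle\nabla\rangle^{-\alpha}$ as a Bessel potential (whose kernel is in $L^1$) and composing with $\partial_x^\alpha$; for general $\alpha\ge 0$ one can reduce to uniform frequency-localized bounds through a Littlewood--Paley decomposition and a Bernstein-type inequality. Once this step is in place, all remaining manipulations are direct applications of Lemmas \ref{lem:tool3} and \ref{lem:tool4}, and the bookkeeping is parallel to Corollary \ref{cor:tool12}.
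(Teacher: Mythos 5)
Your low-/high-frequency split and the appeal to Lemmas \ref{lem:tool3} and \ref{lem:tool4} is the right overall strategy, and the low-frequency estimate via Lemma \ref{lem:tool3} is fine. The gap is in the high-frequency piece, where you apply Lemma \ref{lem:tool4} with input $\partial_x^\alpha f$ and then try to absorb $\partial_x^\alpha$ into $\langle\nabla\rangle^\alpha$ at the level of the $L^1_{xy}$ norm. That step requires the Fourier multiplier $\partial_x^\alpha\tilde\chi_H(\partial_x)\langle\nabla\rangle^{-\alpha}$ to be bounded on $L^1(\mathbb R^2)$, which is false. Already for $\alpha=1$ the symbol $i\xi\,\tilde\chi_H(\xi)\,(1+\xi^2+\eta^2)^{-1/2}$ is of Riesz-transform type: bounded and smooth, but with direction-dependent limits at infinity, so the associated kernel is Calder\'on--Zygmund-critical (it behaves like $|(x,y)|^{-2}$ near the origin, e.g.\ $\partial_x$ of the two-dimensional Bessel kernel $G_1\sim |(x,y)|^{-1}$ is not integrable). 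Cutting off small $x$-frequencies removes the singularity at $\xi=0$ but does not change the behavior at infinity. Thus your claim that the Bessel-potential composition ``is elementary'' for integer $\alpha$ is incorrect, and the Littlewood--Paley/Bernstein remark cannot close the gap either: such operators are only weak-$(1,1)$.

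The repair is to absorb the derivative \emph{before} the $L^1\to L^2$ Sobolev step, i.e.\ at the $L^2$ level where Plancherel makes it trivial. For the high-frequency piece, note $\widehat{\partial_x^\alpha K^H f}(\xi,\eta)=(i\xi)^\alpha\chi_H(\xi)\widehat K(t,\xi)\hat f(\xi,\eta)$ and $|\xi|^\alpha\chi_H(\xi)\le\langle(\xi,\eta)\rangle^\alpha$, so by Plancherel
\begin{align*}
\big\|\partial_x^\alpha K^H f\big\|_{L^2_{xy}}
\le\big\|\widehat K(t,\cdot)\big\|_{L^\infty_\xi(|\xi|\ge\frac12)}\,\big\|\langle\nabla\rangle^\alpha f\big\|_{L^2_{xy}},
\end{align*}
and only then invoke the embedding $\|g\|_{L^2_{xy}}\lesssim\big\||\nabla|^{\frac12-\epsilon}\langle\nabla\rangle^{\frac12+2\epsilon}g\big\|_{L^1_{xy}}$ (the last line of the proof of Lemma \ref{lem:tool4}) with $g=\langle\nabla\rangle^\alpha f$. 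This gives exactly the stated bound without any $L^1$-multiplier claim; for the low-frequency piece your argument stands, using $\langle\nabla\rangle^{2\epsilon}\le\langle\nabla\rangle^{\alpha+\frac12+2\epsilon}$. The paper states the corollary without a separate proof, as a consequence of Lemmas \ref{lem:tool3}--\ref{lem:tool4}, and this Plancherel-first ordering is the reading that actually works.
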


\vskip .3in
\section{Proof of Lemma \ref{yl1}}
\label{sec3}

This section provides the proof of Lemma \ref{yl1}. More precisely, we show that
\begin{align*}
\|(\vec u,\, \psi)\|_{Y}\lesssim \|(\vec u,\, \psi)\|_{X} +  Q(\|(\vec u,\, \psi)\|_X),
\end{align*}
where $X,Y$ and $Q$ are defined as in Introduction.

\begin{proof}[Proof of Lemma \ref{yl1}]
First, we recall the basic inequality
\begin{equation}\label{e2.3}
\begin{aligned}
\|g(x,\,y)\|_{L^\infty_{xy}}&\lesssim \|g\|^{\frac14}_{L^2}\, \|\partial_x g\|^{\frac14}_{L^2}\, \|\partial_y g\|^{\frac14}_{L^2}\,\|\partial_x\partial_y g\|^{\frac14}_{L^2}
\\
&\lesssim \|\la\na\ra\pp_x g\|^\frac12_{L^2_{xy}}\|\la\na\ra g\|_{L^2_{xy}}^\frac12.
\end{aligned}
\end{equation}
We now estimate each term in $Y$ and start with the terms related to $\psi$. By interpolation,
\begin{align}\label{e2.41}
\|\la\na\ra^3\pp_x\psi(s,\,x,\,y)\|_{L^2_{xy}}&\lesssim \|\la\na\ra^3\pp_{xx}\psi\|^\frac12_{L^2_{xy}}
\|\la\na\ra^3\psi\|_{L^2_{xy}}^\frac12 \nonumber\\
&\lesssim s^{-\frac58} \|(\vec u,\,\psi)\|^\frac12_{X}\,\,s^{-\frac18} \|(\vec u,\,\psi)\|^\frac12_{X} \nonumber\\
&\lesssim s^{-\frac34} \|(\vec u,\,\psi)\|_{X}.
\end{align}
By \eqref{e2.3} and \eqref{e2.41}, we obtain
\begin{align}\label{e2.42}
\|\la\na\ra^2\pp_x\psi(s,\,x,\,y)\|_{L^\infty_{xy}}\lesssim \|\la\na\ra^3\pp_{xx}\psi\|^\frac12_{L^2_{xy}}
\|\la\na\ra^3\pp_x\psi\|_{L^2_{xy}}^\frac12\lesssim s^{-1}\|(\vec u,\,\psi)\|_{X}
\end{align}
and
\begin{align}\label{e2.43}
\|\la\na\ra^2\psi(s,\,x,\,y)\|_{L^\infty_{xy}}\lesssim \|\la\na\ra^3\pp_{x}\psi\|^\frac12_{L^2_{xy}}
\|\la\na\ra^3\psi\|_{L^2_{xy}}^\frac12\lesssim s^{-\frac12}\|(\vec u,\,\psi)\|_{X}.
\end{align}

\vskip .1in
The other terms in $Y$ are a little tricky. We first construct the following two inequalities, for $j=1,\,2$
and some $0<\a<1$,
\begin{align}\label{e2.5}
\|\la\na\ra \Pi_j\|_2\lesssim s^{-\frac54}\|(\vec u,\,\psi)\|_{Y}^\a\,\|(\vec u,\,\psi)\|_X^{2-\a};
\\\label{e2.6}
\|\Pi_j\|_\infty\lesssim s^{-\frac32}\|(\vec u,\,\psi)\|_{Y}^\a\,\|(\vec u,\,\psi)\|_X^{2-\a}.
\end{align}
Since the cases $j=1$ and $j=2$ can be treated the same way, we only deal with the case $j=1$.
According to the expression  of  $\Pi_1$ in \eqref{N1},
\begin{align*}
\|\la\na\ra \Pi_1\|_2\lesssim\,&\|\la\na\ra v\|_2\,\|\la\na\ra\pp_x u\|_\infty
+\|\la\na\ra v\|_\infty\,\|\la\na\ra^2u\|_2+\|\la\na\ra^3 \psi\|_2\,\|\la\na\ra^2\pp_x \psi\|_\infty
\\
 &+\|\la\na\ra u\|_2\,\|\la\na\ra\pp_x u\|_\infty+\|\la\na\ra u\|_2\,\|\la\na\ra\pp_x v\|_\infty
 +\|\la\na\ra \pp_{xx}\psi\|_\infty\,\|\la\na\ra^2 \psi\|_2
\\
=&I_1+I_2+I_3+{\rm remainder\, terms}.
\end{align*}
The first three terms $I_1$, $I_2$ and $I_3$ are typical of the terms on the right, and the estimates
of remainder terms are similar to them. Therefore, we shall only present their estimates.
\begin{align*}
I_1=&\|\la\na\ra v\|_2\,\|\la\na\ra\pp_x u\|_\infty
\\
\lesssim\,&\|\la\na\ra^N v\|^\frac 1N_2 \| v\|^{1-\frac 1N}_2\,\|\la\na\ra\pp_x u\|_\infty
\\
\lesssim &s^{\frac\va N}\|(\vec u,\psi)\|_X^\frac1N\cdot s^{-\frac54(1-\frac1N)}\| (\vec u,\psi)\|_Y^{1-\frac1N}\cdot s^{-1}\|(\vec u,\psi)\|_X\\
\lesssim &s^{-\frac94+\frac54\cdot\frac1N+\frac\va N}\|(\vec u,\psi)\|_Y^{1-\frac1N}\cdot\|(\vec u,\psi)\|_X^{1+\frac1N}\\
\lesssim & s^{-\frac54}\|(\vec u,\psi)\|_Y^{1-\frac1N}\cdot\|(\vec u,\psi)\|_X^{1+\frac1N},
\end{align*}
where $\va>0$ is a small parameter and $N$ has been chosen big enough.
For the second term, by interpolation we obtain
\begin{align*}
I_2=&\|\la\na\ra v\|_\infty\,\|\la\na\ra^2u\|_2
\\
\lesssim &\|\la\na\ra^N v\|_2^{1+\frac{1-\va}{N-1}}\|v\|^{1-\frac{1-\va}{N-1}}_\infty
\\
\lesssim &s^{\frac{1-\va}{N-1}\cdot\va}\|(\vec u,\,\psi)\|_X^{\frac{1-\va}{N-1}}\, s^{-\frac32(1-\frac{1-\va}{N-1})}\|(\vec u,\,\psi)\|_Y^{1-\frac{1-\va}{N-1}}\,s^\va\|(\vec u,\,\psi)\|_X\\
\lesssim & s^{-\frac54}\|(\vec u,\,\psi)\|_Y^{1-\frac{1-\va}{N-1}}\cdot\|(\vec u,\,\psi)\|_X^{1+\frac{1-\va}{N-1}}.
\end{align*}
By \eqref{e2.42}, we have
\begin{align*}
I_3=\|\la\na\ra^3 \psi\|_2\,\|\la\na\ra^2\pp_x \psi\|_\infty\lesssim s^{-\frac14}\|(\vec u,\,\psi)\|_X\cdot s^{-1}\|(\vec u,\,\psi)\|_X\lesssim s^{-\frac54}\|(\vec u,\,\psi)\|_X^2.
\end{align*}
Thus, we have obtained that, for some $\a_1\in (0,\,1)$,
\begin{align*}
\|\la\na\ra \Pi_1\|_2\lesssim s^{-\frac54}\|(\vec u,\,\psi)\|_X^{2-\a_1}\|(\vec u,\,\psi)\|_Y^{\a_1}.
\end{align*}
Through similar computations, for some $\a_2\in (0,\,1)$, we can get
\begin{align*}
\|\la\na\ra \pp_x\Pi_1\|_2\lesssim s^{-\frac74}\|(\vec u,\,\psi)\|_X^{2-\a_2}\|(\vec u,\,\psi)\|_Y^{\a_2}.
\end{align*}
Therefore, by Nash's inequality, we obtain, for some $\a\in (0,\,1)$,
\begin{align*}
\| \Pi_1\|_\infty\lesssim \|\la\na\ra\pp_x\Pi_1\|^\frac12_2\,\|\la\na\ra \Pi_1\|^\frac12_2\lesssim s^{-\frac32}\|(\vec u,\,\psi)\|_X^{2-\a}\|(\vec u,\,\psi)\|_Y^\a.
\end{align*}
Thus we complete the proof of  \eqref{e2.5} and \eqref{e2.6}.

\vskip .1in
Now, using the inequalities \eqref{e2.41}, \eqref{e2.42},  \eqref{e2.5},  \eqref{e2.6} with $j=1$ and \eqref{u1}, we have for small $\eta_0>0$, $s\ge1$,
\begin{align*}
\|u\|_\infty\lesssim\,& \|\pp_tu\|_\infty +\|\la\na\ra\pp_x\psi\|_\infty+\|\Pi_1\|_\infty
\\
\lesssim\,& s^{-\frac32}\|(\vec u,\,\psi)\|_X+s^{-1}\|(\vec u,\,\psi)\|_X+s^{-\frac32}\|(\vec u,\,\psi)\|_Y^{\a}\|(\vec u,\,\psi)\|_X^{2-\a}
\\
\lesssim\,& s^{-1}(\|(\vec u,\,\psi)\|_X+C(\eta_0)\,\|(\vec u,\,\psi)\|_X^2)+s^{-1}\eta_0\|(\vec u,\,\psi)\|_Y.
\end{align*}
\begin{align*}
\|u\|_2\lesssim\,& \|\pp_tu\|_2+\|\la\na\ra\pp_x\psi\|_2+\|\Pi_1\|_2
\\
\lesssim& s^{-\frac54}\|(\vec u,\,\psi)\|_X+s^{-\frac34}\|(\vec u,\,\psi)\|_X+s^{-\frac54}\|(\vec u,\,\psi)\|_Y^{\a}\|(\vec u,\,\psi)\|_X^{2-\a}
\\
\lesssim& s^{-\frac34}(\|(\vec u,\,\psi)\|_X+C(\eta_0)\,\|(\vec u,\,\psi)\|_X^2)+s^{-\frac34}\eta_0\|(\vec u,\,\psi)\|_Y.
\end{align*}
\begin{align*}
\|\pp_x u\|_2\lesssim\,& \|\pp_x\pp_t u\|_2 +\|\la\na\ra\pp_{xx}\psi\|_2+\|\pp_x\Pi_1\|_2
\\
\lesssim&\|\la\na\ra \pp_tu\|_2 +\|\la\na\ra\pp_{xx}\psi\|_2+\|\la\na\ra \Pi_1\|_2
\\
\lesssim&\|\la\na\ra \pp_tu\|_2 +\|\la\na\ra\pp_{xx}\psi\|_2+
s^{-\frac54}\|(\vec u,\,\psi)\|_Y^{\a}\|(\vec u,\,\psi)\|_X^{2-\a}
\\
\lesssim& s^{-\frac54}(\|(\vec u,\,\psi)\|_X+C(\eta_0)\,\|(\vec u,\,\psi)\|_X^2)+s^{-\frac54}\eta_0\|(\vec u,\,\psi)\|_Y.
\end{align*}
Similarly, by equation \eqref{v1} and inequalities \eqref{e2.42}, \eqref{e2.5} and \eqref{e2.6} with $j=2$,
we have
\begin{align*}
\|v\|_\infty\lesssim \, s^{-\frac32}(\|(\vec u,\,\psi)\|_X+C(\eta_0)\,\|(\vec u,\,\psi)\|_X^2)+s^{-\frac32}\eta_0\|(\vec u,\,\psi)\|_Y.
\\
\|v\|_2\lesssim \, s^{-\frac54}(\|(\vec u,\,\psi)\|_X+C(\eta_0)\,\|(\vec u,\,\psi)\|_X^2)+s^{-\frac54}\eta_0\|(\vec u,\,\psi)\|_Y.
\end{align*}
Therefore, collecting the estimates above, we prove that
\begin{align*}
\|(\vec u,\,\psi)\|_Y\lesssim \, \|(\vec u,\,\psi)\|_X+\eta_0\|(\vec u,\,\psi)\|_Y + \|(\vec u,\,\psi)\|_X^2.
\end{align*}
Choosing $\eta_0$ small enough, we get
\begin{align*}
\|(\vec u,\,\psi)\|_Y\lesssim \, \|(\vec u,\,\psi)\|_X + \|(\vec u,\,\psi)\|_X^2.
\end{align*}
This completes the proof of Lemma \ref{yl1}.
\end{proof}

\vskip .3in
\section{Energy estimates}
\label{sec:energy}

The rest of the sections are devoted to proving \eqref{con1}, namely
$$
\|(\vec u,\, \psi)\|_{X}\lesssim \|(\vec u_0,\, \psi_0)\|_{X_0} + Q\big(\|(\vec u,\, \psi)\|_Y\big).
$$
This section shows that the first term in the definition of the norm of $X$ obeys this inequality.

\vskip .1in
For this purpose, we first show that, for any real number $\sigma>0$,
\begin{align}
& \frac{d}{dt}\Big(\big\|\la\na\ra^\sigma \vec u\big\|_{L^2}^2 +\big\|\la\na\ra^\sigma \vec b\big\|_{L^2}^2\Big)\nonumber \\
\lesssim & \Big(\|\nabla\vec u\|_{L^\infty}+\|\nabla\vec b\|^2_{L^\infty}\Big)
\Big(\|\la\na\ra^\sigma\vec u\|_{L^2}^2   +  \|\la\na\ra^\sigma\vec b\|_{L^2}^2\Big).\label{hsnorm}
\end{align}
To do so, we take advantage of (\ref{e1.11}), which is equivalent to \eqref{e1.1}. Applying
$\la\na\ra^\sigma$ to (\ref{e1.11}) and taking the inner product with
$(\la\na\ra^\sigma \vec u, \la\na\ra^\sigma \vec b)$, we obtain, after integrating by parts and
invoking $\na\cdot \vec u=0$,
\begin{align}
& \frac12 \frac{d}{dt}\Big(\big\|\la\na\ra^\sigma \vec u\big\|_{L^2}^2
+ \big\|\la\na\ra^\sigma \vec b\big\|_{L^2}^2\Big) + \big\|\la\na\ra^\sigma \vec u\big\|_{L^2}^2 \nonumber\\
&\qquad = -\int [\la\na\ra^\sigma,\vec u\cdot\nabla] \vec u \cdot \la\na\ra^\sigma \vec u\,dx
- \int [\la\na\ra^\sigma,\vec u\cdot\nabla] \vec b \cdot \la\na\ra^\sigma \vec b\,dx \nonumber\\
&\qquad + \int [\la\na\ra^\sigma, \vec b\cdot\nabla] \vec b \cdot \la\na\ra^\sigma \vec u\,dx \label{mides}
+ \int [\la\na\ra^\sigma,\vec b\cdot\nabla] \vec u \cdot \la\na\ra^\sigma \vec b\,dx,
\end{align}
where we have used the standard commutator notation
$$
[\la\na\ra^\sigma,\vec u\cdot\nabla] \vec u = \la\na\ra^\sigma (\vec u\cdot\nabla \vec u)
-\vec u\cdot\nabla \la\na\ra^\sigma \vec u.
$$
By H\"{o}lder's inequality and a standard commutator estimate, we have
\begin{align*}
\left|\int [\la\na\ra^\sigma,\vec u\cdot\nabla] \vec u \cdot \la\na\ra^\sigma \vec u\,dx\right| &\le
\big\|[\la\na\ra^\sigma,\vec u\cdot\nabla] \vec u \big\|_{L^2}\, \big\|\la\na\ra^\sigma \vec u\big\|_{L^2} \\
&\lesssim \big\|\la\na\ra^\sigma \vec u\big\|_{L^2}\, \|\nabla\vec u\|_{L^\infty} \, \big\|\la\na\ra^\sigma \vec u\big\|_{L^2},
\end{align*}
moreover, for some constant $C>0$,
\begin{align*}
&\left|\int [\la\na\ra^\sigma,\vec b\cdot\nabla] \vec u \cdot \la\na\ra^\sigma \vec b\,dx\right| \le
\big\|[\la\na\ra^\sigma,\vec b\cdot\nabla] \vec u \big\|_{L^2}\, \big\|\la\na\ra^\sigma \vec b\big\|_{L^2} \\
&\qquad \qquad \lesssim \Big(\big\|\la\na\ra^\sigma \vec u\big\|_{L^2}\, \|\nabla\vec b\|_{L^\infty} +\big\|\la\na\ra^\sigma \vec b\big\|_{L^2}\, \|\nabla\vec u\|_{L^\infty}\Big)  \, \big\|\la\na\ra^\sigma \vec b\big\|_{L^2}\\
&\qquad \qquad \le  \frac14 \|\la\na\ra^\sigma \vec u\big\|_{L^2}^2
+ C\Big(\|\nabla\vec b\|^2_{L^\infty} + \|\nabla\vec u\|_{L^\infty}\Big)\,
\big\|\la\na\ra^\sigma \vec b\big\|^2_{L^2}.
\end{align*}
The other two terms can be similarly bounded. We obtain \eqref{hsnorm} after we insert
the estimates above in (\ref{mides}). Setting $\sigma=N$ and integrating \eqref{hsnorm} in time,
we obtain
\begin{align*}
\big\|\la\na\ra^N (\vec u, \vec b )\big\|_{L^2}^2
\lesssim \|(\vec u_0,\,\psi_0)\|^2_{X_0} + \int_1^t
\Big(\|\nabla\vec u\|_{L^\infty}+\|\nabla\vec b\|^2_{L^\infty}\Big)
\cdot\big\|\la\na\ra^N (\vec u, \vec b ) \big\|_{L^2}^2\,d s.
\end{align*}
To bound $\|\nabla\vec b\|_{L^\infty}$, by the definition of $Y$,
\begin{align*}
\|\nabla\vec b(s)\|^2_{L^\infty} \lesssim \|\la\na\ra^2\psi(s)\|_{L^\infty}^2\lesssim s^{-1}\, \|(\vec u,\,\psi)\|^2_{Y}.
\end{align*}
Also, we have
$$
\|\nabla\vec u(s)\|_{L^\infty}\le \|\nabla u(s)\|_{L^\infty}+ \|\nabla v(s)\|_{L^\infty} \le s^{-1} \|(\vec u,\,\psi)\|_{Y}.
$$
Therefore,
\begin{align*}
\big\|\la\na\ra^N (\vec u, \vec b )\big\|_{L^2}^2
&\lesssim \|(\vec u_0,\,\psi_0)\|^2_{X_0}+\int_1^t\langle s\rangle^{-1+2\varepsilon}\,ds\>(\|(\vec u,\,\psi)\|_{Y}+\|(\vec u,\,\psi)\|^2_{Y})
\>\|(\vec u,\,\psi)\|^2_{Y}
\\
&\lesssim
\|(\vec u_0,\,\psi_0)\|^2_{X_0}+
 t^{2\varepsilon}\>(\|(\vec u,\,\psi)\|_{Y}^3+\|(\vec u,\,\psi)\|^4_{Y}).
\end{align*}
\vskip .2in
This proves that
$$
\sup_{t\geq 1}\left(t^{-\varepsilon}\big\|\la\na\ra^N\big(\vec u(t),\,\nabla \psi(t)\big)\big\|_2\right)
\lesssim
\|(\vec u_0,\,\psi_0)\|_{X_0}+
(\|(\vec u,\,\psi)\|_{Y}^3+\|(\vec u,\,\psi)\|^4_{Y})^{\frac12}.
$$

\vskip .3in
\section{Estimates on nonlinearities}
\label{sec:non}

This section estimates the nonlinear terms $F_0, F_1, F_2$,
defined in (\ref{e1.6}). These bounds will be used in the proof of (\ref{con1}) given in
the subsequent sections.

\vskip .1in
We will use the Littlewood-Paley projection operators. Let $\phi(\xi)$ be a
smooth bump function supported in the ball $|\xi| \le 2$ and equals one on the ball $|\xi|\le 1$.
For any real number $M>0$ and $f\in \mathcal{S}'$ (tempered distributions), the projection
operators can be defined as follows:
\begin{align*}
&\widehat{P_{\le M}f}(\xi) := \phi(\xi/M) \,\hat{f}(\xi),\\
&\widehat{P_{> M}f}(\xi) := \left(1-\phi(\xi/M)\right) \,\hat{f}(\xi),\\
&\widehat{P_{M}f}(\xi) := \left(\phi(\xi/M) - \phi(2\xi/M)\right) \,\hat{f}(\xi).
\end{align*}

We also need the following estimate involving the Riesz transform $\mathcal R$.
\begin{lemma}\label{lem:Riesz-bound}
For any $\epsilon >0$,
\begin{align}\label{Riesz-bound}
\big\|\mathcal R  f\big\|_{L^1_{xy}}\lesssim \big\||\nabla|^{-\epsilon}\langle\nabla\rangle^{2\epsilon}f\big\|_{L^1_{xy}}.
\end{align}
\end{lemma}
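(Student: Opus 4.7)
The Riesz transform $\mathcal{R}$ has Fourier symbol $\xi_j/|\xi|$, which is bounded but has a jump discontinuity at the origin; this is precisely why $\mathcal{R}$ fails to be bounded on $L^1$. The role of $|\nabla|^{-\epsilon}$ on the right-hand side is to smooth out this origin singularity, while $\langle\nabla\rangle^{2\epsilon}$ compensates at high frequencies. My plan is to prove \eqref{Riesz-bound} via a (homogeneous) dyadic Littlewood-Paley decomposition combined with a scaling argument on each dyadic piece, showing that the $L^1\to L^1$ operator norm of $\mathcal{R}P_M$, suitably rewritten, gains a power $\min(M^{\epsilon}, M^{-\epsilon})$ which is summable in the dyadic index.

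Set $h := |\nabla|^{-\epsilon}\langle\nabla\rangle^{2\epsilon}f$, so that $f = |\nabla|^{\epsilon}\langle\nabla\rangle^{-2\epsilon}h$, and decompose $f = \sum_{k\in\mathbb{Z}}P_{2^k}f$. For each dyadic $M = 2^k$, the composition $\mathcal{R}P_M$ acting on $f$ becomes a Fourier multiplier $T_M$ acting on $h$ with symbol
$$
m_M(\xi) = \chi_M(\xi)\,\frac{\xi_j}{|\xi|}\,|\xi|^{\epsilon}\,\langle\xi\rangle^{-2\epsilon},
$$
where $\chi_M(\xi) = \phi(\xi/M)-\phi(2\xi/M)$ is the annular cutoff from the definition of $P_M$, supported in $|\xi|\sim M$. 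Rescaling $\xi = M\eta$ one finds
$$
m_M(M\eta) = M^{\epsilon}\,\chi_1(\eta)\,\eta_j\,|\eta|^{\epsilon-1}\,\langle M\eta\rangle^{-2\epsilon},
$$
and on the fixed annulus $|\eta|\sim 1$ the last factor has size $\min(1, M^{-2\epsilon})$, with each $\eta$-derivative obeying the same bound (each derivative brings a factor of $M$ which is cancelled by an extra $\langle M\eta\rangle^{-1}$ of size $\min(1, M^{-1})$). Consequently the rescaled symbol is smooth, compactly supported, with all $C^k$-norms uniformly controlled by $\min(1, M^{-2\epsilon})$, so its inverse Fourier transform is Schwartz and a direct scaling computation yields
$$
\|\check m_M\|_{L^1_x}\lesssim M^{\epsilon}\,\min(1, M^{-2\epsilon}) = \min\bigl(M^{\epsilon}, M^{-\epsilon}\bigr).
$$

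By Young's convolution inequality, $\|\mathcal{R}P_M f\|_{L^1} \le \|\check m_M\|_{L^1}\|h\|_{L^1}$, and summing the resulting geometric series over $k\in\mathbb{Z}$ gives
$$
\|\mathcal{R}f\|_{L^1} \leq \sum_{k\in\mathbb{Z}}\|\mathcal{R}P_{2^k}f\|_{L^1} \lesssim \Bigl(\sum_{k\le 0}2^{k\epsilon}+\sum_{k>0}2^{-k\epsilon}\Bigr)\|h\|_{L^1}\lesssim \|h\|_{L^1},
$$
which is exactly \eqref{Riesz-bound}. The only technical step requiring genuine care is the uniform-in-$M$ bound on $\|\check m_M\|_{L^1}$: this is a quantitative Bernstein-type estimate, and it works precisely because the annular support of $\chi_M$ kills the $|\xi|^{\epsilon-1}$ singularity at the origin, making the rescaled symbol a smooth, compactly supported function whose $C^k$-norms scale with exactly the power of $M$ anticipated from the Bessel weight $\langle\xi\rangle^{-2\epsilon}$.
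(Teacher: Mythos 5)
Your proposal is correct and takes essentially the same route as the paper: a dyadic Littlewood--Paley decomposition, an $L^1$ multiplier bound on each dyadic piece producing a factor $\min(M^\epsilon, M^{-\epsilon})$, and summation of the resulting geometric series. The paper records the dyadic multiplier estimate more tersely (bounding $\|P_M\mathcal{R}f\|_{L^1}$ by $\|P_M f\|_{L^1}$ and then inserting the $\epsilon$-weights via Bernstein-type bounds on the annulus), whereas you carry out the same estimate in one step by rescaling the combined symbol and controlling its kernel in $L^1$; the underlying mechanism is identical.
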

The proof of Lemma \ref{lem:Riesz-bound} is presented in Appendix \ref{app2}.

\vskip .1in
\begin{lemma}\label{lem:claim1}
For any $s\ge 1$,
\begin{align}\label{claim1}
\big\|\langle \nabla\rangle^5|\nabla|^{\frac12-\epsilon}F_1(s, \cdot)\big\|_{L^1_{xy}}
\lesssim
s^{-\frac32-\varepsilon}Q\big(\|(\vec u,\, \psi)\|_Y\big) ,
\end{align}
where $\epsilon$ is same as in Corollary \ref{cor:too1}, and $\varepsilon$ is same as in \eqref{X}.
\end{lemma}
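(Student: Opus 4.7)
The plan is to substitute the explicit expansion of $\Pi_1$ from \eqref{N1term} into $F_1=\partial_t\Pi_1-\partial_{xy}(\vec u\cdot\nabla\psi)$, and then bound each resulting summand in the norm $\|\langle\nabla\rangle^5|\nabla|^{\frac12-\epsilon}(\cdot)\|_{L^1_{xy}}$. Whenever $\partial_t$ falls on one of the factors of a product inside $\Pi_1$, I would replace $\partial_t u$, $\partial_t v$, $\partial_t\psi$ by their spatial expressions from \eqref{u1}--\eqref{p1}, namely $\partial_t u=-u+\partial_{xy}\psi+\Pi_1$, $\partial_t v=-v-\partial_{xx}\psi+\Pi_2$, and $\partial_t\psi=-\vec u\cdot\nabla\psi-v$. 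After this substitution, $F_1$ becomes a finite sum of bilinear and trilinear products of quantities whose $L^2_{xy}$ and $L^\infty_{xy}$ decay rates are all recorded in the $Y$-norm; the heuristic scaling $u\sim\partial_x\sim s^{-1/2}$ and $v\sim\partial_{xx}\sim s^{-1}$ built into the definition of $Y$ is designed precisely so that every bilinear product accumulates at least $s^{-3/2}$ in $L^1_{xy}$.

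For the purely local summands (those coming from $-\vec u\cdot\nabla u$, $-\Delta\psi\,\partial_x\psi$, and the outer $-\partial_{xy}(\vec u\cdot\nabla\psi)$), no Riesz operator is present and I would estimate $\|\langle\nabla\rangle^5|\nabla|^{\frac12-\epsilon}(FG)\|_{L^1_{xy}}$ directly via a Littlewood--Paley paraproduct decomposition, pushing the weight $\langle\nabla\rangle^5|\nabla|^{\frac12-\epsilon}$ onto the smoother factor and then closing by Hölder $\|FG\|_{L^1_{xy}}\le\|F\|_{L^2_{xy}}\|G\|_{L^2_{xy}}$. For the summands involving the Riesz-type operator $\partial_x\Delta^{-1}\nabla\cdot$ inherited from $\Pi_1$, I would first invoke Lemma \ref{lem:Riesz-bound} to reduce the $L^1_{xy}$ norm of the Riesz image to $\||\nabla|^{-\epsilon}\langle\nabla\rangle^{2\epsilon}(FG)\|_{L^1_{xy}}$; the resulting negative fractional derivative is then absorbed into the $|\nabla|^{\frac12-\epsilon}$ weight on the left. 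Interpolation against the top-order energy bound $\|\langle\nabla\rangle^N(\vec u,\nabla\psi)\|_{L^2_{xy}}\lesssim s^\varepsilon\|(\vec u,\psi)\|_X$ is invoked whenever an excessively high derivative lands on a single factor, and the tiny $s^\varepsilon$ loss it contributes is exactly what produces the allowed $\varepsilon$ on the right-hand side.

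The main obstacle will be the bookkeeping of the cubic and quartic contributions that arise after the time-derivative substitution, most visibly from $\partial_t(\Delta\psi\,\partial_x\psi)$, and from the interaction of $\Pi_1$ (which is itself a quadratic object) with $\nabla u$ inside $\partial_t(\vec u\cdot\nabla u)$. For these higher-multiplicity products, I would place one factor in $L^\infty_{xy}$ using the $L^\infty$ entries of the $Y$-norm, keep the remaining factors in $L^2_{xy}$, and absorb the product of three (or four) small norms into the polynomial tail $Q$ on the right. A secondary subtlety is that several of the critical bilinear products involve a $\partial_y$ that cannot be freely traded for $\partial_x$; here the extra $\partial_x$ hidden inside the Riesz factor $\partial_x\Delta^{-1}\nabla\cdot$ provides the compensating gain that keeps the time exponent at $-3/2-\varepsilon$. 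Once every one of the roughly a dozen resulting terms is dispatched in this uniform way, summing delivers the claimed estimate.
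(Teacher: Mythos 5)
Your plan correctly identifies the quadratic structure of $F_1$, the role of the Riesz‐transform lemma (Lemma~\ref{lem:Riesz-bound}), and the need to replace $\partial_t$ of a factor by its spatial expression from \eqref{u1}--\eqref{p1}.  But the central device of the paper's proof is missing, and without it your scheme does not close.  The paper does not use a paraproduct decomposition or straight interpolation against the $H^N$ bound.  Instead it introduces a \emph{time-dependent} frequency threshold $s^\delta$ and writes each summand as $F_{1,low}+F_{1,high}$, where the low piece carries only frequencies $\lesssim s^\delta$ on all factors and the high piece has at least one factor at frequencies $\gtrsim s^\delta$.  On the high piece the projection $P_{\gtrsim s^\delta}$ gives a factor $s^{-(N-8)\delta}$ once the $H^N$ energy bound is inserted; this is where the top-order estimate enters, and it is a \emph{gain} driven by the cutoff, not an interpolation step.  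On the low piece a Bernstein inequality costs only $s^{C\delta}$, which is absorbed because the product of $L^2$-decay rates already gives $s^{-7/4}$ (using the key intermediate bound $\|\partial_x\psi_t\|_2\lesssim s^{-3/2}Q$, equation \eqref{e2.7}, which your sketch also omits).  The free parameter $\delta$ is then chosen small enough to respect the low-frequency constraint and, because $N$ is huge, large enough for the high-frequency gain; this is what produces the final $s^{-3/2-\varepsilon}$.

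By contrast, the path you describe—pushing the weight $\langle\nabla\rangle^5|\nabla|^{1/2-\epsilon}$ onto the smoother factor, H\"older, and interpolating against $\|\langle\nabla\rangle^N(\vec u,\nabla\psi)\|_2\lesssim s^\varepsilon$—yields at best $s^{-3/2+O(1/N)+O(\varepsilon)}$, i.e.\ \emph{slower} decay than $s^{-3/2}$, and in particular cannot reach the strictly better rate $s^{-3/2-\varepsilon}$ demanded by the lemma (and needed later to absorb a logarithm in the Duhamel integral over $[t/2,t]$).  You also misattribute the sign: you write that the ``tiny $s^\varepsilon$ loss'' from the energy bound ``is exactly what produces the allowed $\varepsilon$ on the right-hand side,'' but that $s^\varepsilon$ is a loss and can only worsen the exponent; the surplus $-\varepsilon$ in the statement comes from the slack between the $s^{-7/4}$ product decay and $s^{-3/2}$ after paying the Bernstein cost $s^{C\delta}$.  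So the genuine gap is the absence of the time-dependent dyadic cutoff at scale $s^\delta$ together with estimate \eqref{e2.7}; these, rather than paraproducts or plain interpolation, are what make the proof go through.
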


\begin{proof}
By definition \eqref{Def:F1},
\begin{align}\label{clm1:1}
F_1=&-\partial_x\partial_y\big(u\>\partial_x\psi\big)
-\partial_x\partial_y\big(v\>\partial_y\psi\big)+\partial_t \Pi_1,
\end{align}
where $\Pi_1$ is explicitly given in Appendix \ref{app1}. Since $F_1$ is a quadratic nonlinearity, we write
$$
F_1=F_{11}(u,v)+F_{12}(u,\psi)+F_{13}(v,\psi)+F_{14}(u,u)+F_{15}(v,v)+F_{16}(\psi,\psi),
$$
where $F_{11}(u,v)$ is a collection of the terms which contain the unknown function $(u,v)$, and $F_{12},\cdots, F_{16}$
are defined similarly. To bound these terms, we split $F_1$ into low frequency parts and high frequency parts. More precisely,
for small $\delta>0$ to be specified later,
we write
\begin{align}
F_1:= F_{1,high} + F_{1,low},   \label{F1highlow}
\end{align}
where
\begin{align*}
&F_{1,high}=F_{11}\big((1-P_{\le s^\delta})u,v\big)+F_{11}\big(P_{\le s^\delta}u,(1-P_{\le s^\delta})v\big)+\cdots,\\
&F_{1,low}=F_{11}\big(P_{\le s^\delta} u, P_{\le s^\delta} v\big) +\cdots,
\end{align*}
that is, each term in $F_{1,high}$ contains at least one high frequency part and the terms in $F_{1,low}$ involve only
low frequencies.

\vskip .1in
Although the number of the terms in $F_1$ is large, they can be treated similarly. For the sake of clarity,
we shall only present the estimates for a representative term. That is, we write
\begin{align}
F_1=\frac{\partial_y\partial_y}{\Delta}\big(\partial_y\psi\>\partial_x\partial_y\psi_t\big)+\mbox{similar terms}.
\end{align}
We now focus on the representative term $\frac{\partial_y\partial_y}{\Delta}\big(\partial_y\psi\>\partial_x\partial_y\psi_t\big)$.
As in (\ref{F1highlow}), we split it into high and low frequency terms. First, we deal with those involving high frequencies,
which can be treated by a standard way (see \cite{LiWu} for some related analysis). We focus on $\frac{\partial_x\partial_y}{\Delta}\big(\partial_y\psi\>\partial_x\partial_yP_{\gtrsim s^\delta}\psi_t\big)$ and
by \eqref{Riesz-bound},
\begin{align*}
&\Big\|\langle\nabla \rangle^5|\nabla|^{\frac12-\epsilon}  \frac{\partial_y\partial_y}{\Delta}\big(\partial_y\psi\>\partial_x\partial_yP_{\gtrsim s^\delta}\psi_t\big)\Big\|_{L^1_{xy}}\\
\lesssim &
\Big\|\langle \nabla\rangle^{5+2\epsilon}|\nabla|^{\frac12-2\epsilon}  \big(\partial_y\psi\>\partial_x\partial_yP_{\gtrsim s^\delta}\psi_t\big)\Big\|_{L^1_{xy}}\\
\lesssim &
\Big\|\langle \nabla\rangle^{6}\big(\partial_y\psi\>\partial_x\partial_yP_{\gtrsim s^\delta}\psi_t\big)\Big\|_{L^1_{xy}}\\
\lesssim &
\Big\|\langle \nabla\rangle^{6}\partial_y\psi\Big\|_{L^2_{xy}}\>\big\|\partial_x\partial_yP_{\gtrsim s^\delta}\psi_t\big\|_{L^2_{xy}}
+ \big\|\partial_y\psi\big\|_{L^2_{xy}}\>\Big\|\langle \nabla\rangle^{6}\partial_x\partial_yP_{\gtrsim s^\delta}\psi_t\Big\|_{L^2_{xy}}\\
\lesssim &
\Big\|\langle \nabla\rangle^{7}\psi\Big\|_{L^2_{xy}}\>\Big\|\langle \nabla\rangle^{8}P_{\gtrsim s^\delta}\psi_t\Big\|_{L^2_{xy}}.
\end{align*}
Since $\psi_t=-v-\vec u\cdot \nabla \psi$, we have, for the enough large $N$,
\begin{align*}
\Big\|\langle \nabla\rangle^{8}P_{\gtrsim s^\delta}\psi_t\Big\|_{L^2_{xy}}
\lesssim &
\Big\|\langle \nabla\rangle^{8}P_{\gtrsim s^\delta}v\Big\|_{L^2_{xy}}+
\Big\|\langle \nabla\rangle^{8}P_{\gtrsim s^\delta}\big(\vec u\cdot \nabla \psi\big)\Big\|_{L^2_{xy}}\\
\lesssim &
s^{-(N-8)\delta}\Big(\big\|\langle \nabla\rangle^{N}v\big\|_{L^2_{xy}}+
\big\|\langle \nabla\rangle^{N}\big(\vec u\cdot \nabla \psi\big)\big\|_{L^2_{xy}}\Big)\\
\lesssim &
s^{-(N-8)\delta}\Big(\big\|\langle \nabla\rangle^{N}v\big\|_{L^2_{xy}}+
\big\|\langle \nabla\rangle^{N}\vec u\big\|_{L^2_{xy}}\cdot \big\|\langle \nabla\rangle^{N+1} \psi\big\|_{L^2_{xy}}\Big)\\
\lesssim &
s^{-\frac32-2\varepsilon}\Big(\|(\vec u, \psi)\|_Y+\|(\vec u, \psi)\|_Y^2\Big).
\end{align*}
 Therefore, combining with above two estimates, we obtain that
$$
\Big\|\langle\nabla \rangle^{5}|\nabla|^{\frac12-\epsilon}  \frac{\partial_y\partial_y}{\Delta}\big(\partial_y\psi\>\partial_x\partial_yP_{\gtrsim s^\delta}\psi_t\big)\Big\|_{L^1_{xy}}
\lesssim
s^{-\frac32-\varepsilon}Q\big(\|(\vec u,\, \psi)\|_Y\big) .
$$

Now we turn to  $F_{1,low}$. Again, using \eqref{Riesz-bound}, we have
\begin{align*}
&\Big\|\langle\nabla \rangle^{5}|\nabla|^{\frac12-\epsilon}  \frac{\partial_y\partial_y}{\Delta}\big(\partial_yP_{\le s^\delta}\psi\>\partial_x\partial_yP_{\le s^\delta}\psi_t\big)\Big\|_{L^1_{xy}}\\
=&\Big\|\langle\nabla \rangle^{5}|\nabla|^{\frac12-\epsilon}  \frac{\partial_y\partial_y}{\Delta}P_{\le 4s^\delta}\big(\partial_yP_{\le s^\delta}\psi\>\partial_x\partial_yP_{\le s^\delta}\psi_t\big)\Big\|_{L^1_{xy}}\\
\lesssim &
\Big\|\langle\nabla \rangle^{5+2\epsilon}|\nabla|^{\frac12-2\epsilon} P_{\le 4s^\delta} \big(\partial_yP_{\le s^\delta}\psi\>\partial_x\partial_yP_{\le s^\delta}\psi_t\big)\Big\|_{L^1_{xy}}\\
\lesssim &
s^{6\delta} \big\|\partial_yP_{\le s^\delta}\psi\>\partial_x\partial_yP_{\le s^\delta}\psi_t\big\|_{L^1_{xy}}\\
\lesssim &
s^{6\delta} \big\|\partial_yP_{\le s^\delta}\psi\big\|_{L^2_{xy}}\big\|\partial_x\partial_yP_{\le s^\delta}\psi_t\big\|_{L^2_{xy}}\\
\lesssim &
s^{8\delta} \big\|\psi\big\|_{L^2_{xy}}\big\|\partial_x\psi_t\big\|_{L^2_{xy}}.
\end{align*}
Now we need the following estimate,
\begin{align}\label{e2.7}
\big\|\partial_x\psi_t\big\|_{L^2_{xy}}\lesssim s^{-\frac32}\big(\|(\vec{u},\psi)\|_Y+Q\big(\|(\vec u,\, \psi)\|_Y\big)\big).
\end{align}
Indeed, by equations \eqref{v1} and \eqref{p1}, we have
\begin{align}
\big\|\partial_x\psi_t\big\|_{L^2_{xy}}
\lesssim &
\|\partial_x v\|_2+\|\partial_x (\vec u\cdot\nabla \psi)\|_{2}
\notag\\
\lesssim &
\|\partial_x\partial_t v\|_2+\|\partial_{x}^3\psi\|_2+\big(\|\partial_x u\|_2+\|\partial_x v\|_2\big)\|\langle \nabla\rangle \psi\|_\infty\notag\\
&+\big(\| u\|_\infty+\|v\|_\infty\big)\|\langle \nabla\rangle \partial_x\psi\|_2 +\|\partial_x \Pi_2\|_2. \label{21.17}
\end{align}
As in the proof of \eqref{e2.6}, we have
$$
\|\partial_x \Pi_2\|_2\lesssim s^{-\frac32}Q\big(\|(\vec u,\, \psi)\|_Y\big).
$$
Then, \eqref{e2.7} follows from \eqref{21.17}. By \eqref{e2.7}, for any $8\delta\le \frac14+\varepsilon$, we get
\begin{align*}
\Big\|\langle\nabla \rangle^{5}|\nabla|^{\frac12-\epsilon} \frac{\partial_y\partial_y}{-\Delta}  \big(\partial_yP_{\le s^\delta}\psi\>\partial_x\partial_yP_{\le s^\delta}\psi_t\big)\Big\|_{L^1_{xy}}
\lesssim&
s^{8\delta} s^{-\frac74}\|(\vec u,\psi)\|_Y^2\\
\lesssim &
s^{-\frac32-\varepsilon}Q\big(\|(\vec u,\, \psi)\|_Y\big).
\end{align*}
This proves the lemma.
\end{proof}

\vskip .1in
\begin{lemma} \label{lem:claim2}
Let $\epsilon,\varepsilon$ be the same as in Lemma \ref{claim1}. Then for any $s\ge 1$,
\begin{align}\label{claim2}
\big\|\langle \nabla\rangle^5|\nabla|^{\frac12-\epsilon}F_2\big\|_{L^1_{xy}}
\lesssim
s^{-\frac32-\varepsilon}Q\big(\|(\vec u,\, \psi)\|_Y\big).
\end{align}
\end{lemma}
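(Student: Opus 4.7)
The strategy is to mirror the proof of Lemma \ref{lem:claim1} line for line. Recall that
$$F_2=\partial_t\Pi_2+\partial_{xx}(\vec u\cdot\nabla \psi),$$
and that $\Pi_2$, once expanded as in the Appendix, is a sum of bilinear expressions in $(u,v,\psi)$ of exactly the same type as those appearing in $\Pi_1$, with $\partial_y$ replacing $\partial_x$ in the Leray projector factor $\partial_y\Delta^{-1}\nabla\cdot$. So $F_2$ decomposes as a finite sum of bilinear terms $F_{21}(u,v)+F_{22}(u,\psi)+\cdots+F_{26}(\psi,\psi)$ whose structure matches that of $F_1$, with two differences: (i) some Riesz operators appear as $\partial_y\partial_*/\Delta$ rather than $\partial_x\partial_*/\Delta$, and (ii) the explicit derivative prefactor $\partial_{xx}$ replaces $\partial_{xy}$. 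Point (i) is inconsequential since Lemma \ref{lem:Riesz-bound} treats both Riesz components identically; point (ii) is favorable because the extra $\partial_x$ provides additional smallness through the $Y$-norm bounds on $\partial_x u$, $\partial_x v$, $\partial_x \psi$.

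Fix a small $\delta>0$ and split each bilinear factor by the Littlewood--Paley cutoffs $P_{\le s^\delta}$ and $1-P_{\le s^\delta}$, writing $F_2=F_{2,\text{high}}+F_{2,\text{low}}$. For $F_{2,\text{high}}$, pick any representative term, say $\tfrac{\partial_y\partial_y}{\Delta}\bigl(\partial_y\psi\,\partial_x\partial_y P_{\gtrsim s^\delta}\psi_t\bigr)$ or $\partial_{xx}\bigl(u\cdot P_{\gtrsim s^\delta}\partial_x\psi\bigr)$. Lemma \ref{lem:Riesz-bound} removes the Riesz factor at a cost of $\langle\nabla\rangle^{2\epsilon}|\nabla|^{-\epsilon}$. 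Then the product is controlled in $L^1_{xy}$ by Cauchy--Schwarz, placing the high-frequency factor in $\langle\nabla\rangle^N L^2$ and invoking Bernstein to extract the gain $s^{-(N-C)\delta}$. Any $\partial_t$ that appears is replaced by the equations \eqref{u1}--\eqref{p1}: $\psi_t=-v-\vec u\cdot\nabla\psi$, while $\partial_t u$ and $\partial_t v$ are controlled in $\langle\nabla\rangle L^2$ by the $X$-norm. Choosing $N$ sufficiently large this term is $\lesssim s^{-2}\,Q(\|(\vec u,\psi)\|_Y)$, which beats $s^{-3/2-\varepsilon}$.

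For $F_{2,\text{low}}$, the frequency localization lets us apply Bernstein to trade the operator $\langle\nabla\rangle^5|\nabla|^{1/2-\epsilon}$ for an $s^{O(\delta)}$ power acting on the low-frequency bilinear terms. Estimating these in $L^1_{xy}$ by Cauchy--Schwarz and using the $Y$-norm decay rates produces the typical bound
$$
s^{C\delta}\,\|\psi\|_{L^2}\,\|\partial_x\psi_t\|_{L^2},\quad s^{C\delta}\,\|\psi\|_{L^2}\,\|\partial_x\vec u\|_{L^2},\quad\text{etc.}
$$
The key ingredient, the bound $\|\partial_x\psi_t\|_{L^2}\lesssim s^{-3/2}\bigl(\|(\vec u,\psi)\|_Y+Q(\|(\vec u,\psi)\|_Y)\bigr)$, was already established within the proof of Lemma \ref{lem:claim1} via equations \eqref{v1}, \eqref{p1} and inequality \eqref{e2.6}, so it can be quoted here. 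After combining, we obtain $s^{C\delta-7/4}Q(\|(\vec u,\psi)\|_Y)$, which is $\lesssim s^{-3/2-\varepsilon}Q(\|(\vec u,\psi)\|_Y)$ provided $\delta$ is chosen so that $C\delta\le \tfrac14+\varepsilon$, exactly as in Lemma \ref{lem:claim1}.

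The only potential obstacle is handling the prefactor $\partial_{xx}(\vec u\cdot\nabla\psi)$, which at first glance costs an extra $\partial_x$ over the corresponding $\partial_{xy}$ term in $F_1$. However, this derivative can be absorbed by the improved $x$-derivative decay in the $Y$-norm (the terms $\|\partial_x u\|_{L^2}\lesssim s^{-5/4}$, $\|\langle\nabla\rangle\partial_x\vec u\|_{L^\infty}\lesssim s^{-1}$, $\|\partial_{xxx}\psi\|_{L^2}\lesssim s^{-3/2}$ and $\|\partial_x\partial_t v\|_{L^2}\lesssim s^{-3/2}$), so no genuinely new difficulty appears and the target estimate \eqref{claim2} follows.
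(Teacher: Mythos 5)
Your proposal is correct and follows the paper's route: the paper's own proof of this lemma is just the observation that $F_2=\partial_t\Pi_2+\partial_{xx}(\vec u\cdot\nabla\psi)$ has the same bilinear/Riesz structure as $F_1$ (with a representative term $-2\frac{\partial_x\partial_y}{\Delta}(\partial_y\psi\,\partial_x\partial_y\psi_t)$), so the estimate carries over verbatim from Lemma \ref{lem:claim1}. Your write-up makes this explicit, correctly noting that the change of Riesz components is neutral under Lemma \ref{lem:Riesz-bound} and that $\partial_{xx}$ in place of $\partial_{xy}$ only improves the time decay, and it re-uses the same high/low frequency split and the bound on $\|\partial_x\psi_t\|_{L^2}$ already proved in Lemma \ref{lem:claim1}.
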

\begin{proof}
As before, we write
\begin{align}\label{clm2:1}
F_2=&\partial_t\Pi_2+\partial_x\partial_x\big(\vec u\>\nabla\psi\big)\notag\\
=&-2\frac{\partial_x\partial_y}{\Delta}\big(\partial_y\psi\>\partial_x\partial_y\psi_t\big)+\mbox{similar terms}.
\end{align}
Since \eqref{clm2:1} has the similar form as \eqref{clm1:1}, we have the same estimate as the one for $F_1$. The details are omitted here.
\end{proof}

The nonlinear term $F_0$ behaves much differently from $F_1,F_2$, indeed, we have
\begin{lemma} \label{F0bd}
Let $\epsilon,\varepsilon$ be the same as in Lemma \ref{claim1}. Then for any $s\ge 1$,
\begin{align}
\big\|\langle \nabla\rangle^{5}|\nabla|^{\frac12-\epsilon}F_0\big\|_{L^1_{xy}}
\lesssim &
s^{-1-\varepsilon}Q\big(\|(\vec u,\, \psi)\|_Y\big) ;\label{claim31}\\
\big\|\langle \nabla\rangle^{\frac{11}2+2\epsilon}\nabla F_0\big\|_{L^1_{xy}}
\lesssim &
s^{-\frac32+\varepsilon}Q\big(\|(\vec u,\, \psi)\|_Y\big) ;\label{claim32}\\
\big\|\langle \nabla\rangle^{5}\nabla \partial_x F_0\big\|_{L^1_{xy}}
\lesssim &
s^{-\frac32-\varepsilon}Q\big(\|(\vec u,\, \psi)\|_Y\big).\label{claim33}
\end{align}
\end{lemma}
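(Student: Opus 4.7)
The plan is to mimic the high/low frequency decomposition used in the proofs of Lemmas \ref{lem:claim1} and \ref{lem:claim2}. From \eqref{Def:F0},
$$
F_0 = -\vec u\cdot\nabla\psi \;-\; \partial_t(\vec u\cdot\nabla\psi) \;-\; \Pi_2,
$$
so $F_0$ is a sum of quadratic (and after substitution, cubic) expressions in $(\vec u,\psi)$ plus the $\Pi_2$ contribution, which carries $\partial_y\Delta^{-1}\nabla\cdot$ operators that will be absorbed by Lemma \ref{lem:Riesz-bound}. Before doing anything, I would use the transport equation $\psi_t=-v-\vec u\cdot\nabla\psi$ to rewrite $\partial_t(\vec u\cdot\nabla\psi)$ in terms of $\partial_t\vec u$, $\vec u$, $v$ and spatial derivatives of $\psi$; this is exactly the trick already applied at \eqref{e2.7} and in the proof of Lemma \ref{lem:claim1}. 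I would also exploit the algebraic identity $\partial_y\Delta^{-1}\partial_y = 1 - \partial_{xx}\Delta^{-1}$ to combine the raw term $-\Delta\psi\,\partial_y\psi$ in $\Pi_2$ with part of $\partial_y\Delta^{-1}\nabla\cdot(\Delta\psi\nabla\psi)$, converting them into Riesz-transformed expressions of the form $\partial_{xx}\Delta^{-1}(\Delta\psi\,\partial_y\psi)$ and $\partial_y\partial_x\Delta^{-1}(\Delta\psi\,\partial_x\psi)$; this cancellation is crucial because it extracts an extra $\partial_x$ that will help reach the stated rates.

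For each bilinear piece $B(f,g)$ that survives, I would then carry out the standard split
$$
B(f,g)=B(P_{\le s^\delta}f,P_{\le s^\delta}g)+B((1-P_{\le s^\delta})f,g)+B(P_{\le s^\delta}f,(1-P_{\le s^\delta})g)
$$
for a small $\delta>0$ to be chosen at the end. On the high-frequency pieces, I apply Lemma \ref{lem:Riesz-bound} to absorb the Riesz factors, pair the two factors by Cauchy--Schwarz, place a generous number of derivatives on the high-frequency factor controlled by the energy estimate of Section \ref{sec:energy} (with a loss of only $s^\varepsilon$), and use the Bernstein gain $s^{-(N-k)\delta}$ from the projection $1-P_{\le s^\delta}$. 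Choosing $N$ large drives this contribution arbitrarily far below the target power of $s$. On the low-frequency piece, I use Bernstein on the outer Fourier multiplier $\langle\nabla\rangle^{5}|\nabla|^{1/2-\epsilon}$, $\langle\nabla\rangle^{11/2+2\epsilon}\nabla$, or $\langle\nabla\rangle^{5}\nabla\partial_x$, respectively, producing a loss of at most $s^{C\delta}$; the remaining $L^1$-product is then estimated by Hölder against $Y$-norm ingredients.

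The three estimates differ only in which Hölder pairing drives the final rate. For \eqref{claim31}, the slowest piece is the undifferentiated $\vec u\cdot\nabla\psi$: pairings like $\|v\|_2\,\|\langle\nabla\rangle^k\partial_y\psi\|_\infty \lesssim s^{-5/4}\,s^{-1/2}$ and $\|u\|_2\,\|\langle\nabla\rangle^k\partial_x\psi\|_\infty \lesssim s^{-3/4}\,s^{-1}$, combined with the reduced $\partial_{xx}\Delta^{-1}$-structure of the recombined $\Pi_2$ terms, yield the $s^{-1-\varepsilon}$ ceiling once the $s^\varepsilon$ energy loss is absorbed. For \eqref{claim32}, the extra $\nabla$ forces one factor to pick up an additional derivative, and the analogous pairings improve to $s^{-3/2+\varepsilon}$. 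For \eqref{claim33}, the additional $\partial_x$ lets one invoke the sharper $x$-decay rates $\|\partial_x u\|_2,\|\partial_x v\|_2,\|\partial_x\langle\nabla\rangle^3\psi\|_\infty$ encoded in the $Y$-norm, producing the sharp $s^{-3/2-\varepsilon}$. The $\partial_t\vec u$ contributions are always better thanks to the $X$-bounds $\|\partial_t\vec u\|_\infty\lesssim s^{-3/2}$ and $\|\langle\nabla\rangle\partial_t\vec u\|_2\lesssim s^{-5/4}$.

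The main obstacle is \eqref{claim31}. Unlike $F_1$ and $F_2$, the leading piece of $F_0$ carries no prefactor of $\partial_x^2$ or $\partial_x\partial_y$, so the raw term $\Delta\psi\,\partial_y\psi$ inside $\Pi_2$ cannot be controlled by the fast $x$-decay of $\psi$ directly. Confirming that the cancellation with $\partial_y\Delta^{-1}\nabla\cdot(\Delta\psi\nabla\psi)$ restores enough $\partial_x$-structure to hit $s^{-1-\varepsilon}$ — and then bookkeeping every surviving Hölder pairing so it beats $s^{-1-\varepsilon}$ even after the $s^{C\delta}$ Bernstein loss and the $s^\varepsilon$ energy loss — is where the bulk of the work will lie, and is handled by choosing $\delta$ sufficiently small relative to $\varepsilon$ and $N$ sufficiently large.
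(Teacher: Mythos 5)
Your skeleton (transport-equation substitution to kill the linear-in-$\vec u$ parts, low/high frequency split, Bernstein on the high piece, Riesz absorption via Lemma~\ref{lem:Riesz-bound}) matches the paper's, and the high-frequency piece you dispatch correctly. The genuine gap is in the low-frequency piece of \eqref{claim31}, and it is exactly the place you flag as uncertain. Your algebraic recombination does \emph{not} upgrade every surviving term to $\partial_{xx}\Delta^{-1}$-structure: after all cancellations, the worst representative (from the $-2\frac{\partial_{xy}}{\Delta}(\partial_y\psi\,\partial_{xy}\psi)$ piece inside $\Pi_2$) is $\frac{\partial_x\partial_y}{\Delta}\big(\partial_y\psi\,\partial_x\partial_y\psi\big)$, with only a single $\partial_x$ in the Riesz numerator. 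In the $L^1$-framework the lemmas of Section~\ref{preliminary} require, the only Hölder pairing available is $L^2\times L^2\to L^1$, and the $Y$-norm gives $\|\partial_y\psi\|_2\lesssim s^{-1/4}$, $\|\partial_x\partial_y\psi\|_2\lesssim s^{-3/4}$, so the bilinear product alone decays no faster than $s^{-1}$ (and then incurs a further $s^{2\delta}$ Bernstein loss). The pairings you list, such as $\|v\|_2\,\|\langle\nabla\rangle^k\partial_y\psi\|_\infty$, are $L^2\times L^\infty\to L^2$, which do not produce an $L^1$ bound, and in any case do not address the purely $\psi$-bilinear piece, which is the bottleneck.

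What is missing is a device to convert the \emph{single} $\partial_x$ in the numerator of $\frac{\partial_x\partial_y}{-\Delta}$ into genuine time decay at the level of $L^1$. The paper supplies this through Lemma~\ref{lem:RR}:
\begin{align*}
\Big\||\nabla|^\alpha\langle\nabla\rangle^{N_0}\tfrac{\partial_x\partial_y}{-\Delta}f\Big\|_{L^1_{xy}}
\lesssim \|f\|_{L^1_{xy}}^{1-\alpha+\epsilon}\,\|\partial_x f\|_{L^1_{xy}}^{\alpha-\epsilon}
+\big\|\langle\nabla\rangle^{N_0-1+\alpha+\epsilon}\partial_x f\big\|_{L^1_{xy}},
\end{align*}
which is an interpolation between $\|f\|_1$ and $\|\partial_x f\|_1$ made possible by the fractional factor $|\nabla|^\alpha$ (here $\alpha=\frac12-\epsilon$, matching the $|\nabla|^{1/2-\epsilon}$ in \eqref{claim31}). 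With $f=\partial_yP_{\le s^\delta}\psi\,\partial_x\partial_yP_{\le s^\delta}\psi$ one has $\|f\|_1\lesssim s^{2\delta-1}$ and $\|\partial_x f\|_1\lesssim s^{2\delta-3/2}$, so the interpolant lands at roughly $s^{2\delta-5/4}$, which beats $s^{-1-\varepsilon}$ once $\delta$ is small. The same device with $\alpha=1-\epsilon$ drives \eqref{claim32} and \eqref{claim33}. Without this lemma (or an equivalent fractional-$\partial_x$ extraction), the low-frequency piece of the $\psi\otimes\psi$ terms stalls at $s^{-1+2\delta}$ and the argument does not close.
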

\begin{proof}
By \eqref{Def:F0} we write
\begin{align}\label{clm3:1}
F_0=&-(\vec u+\vec u_t)\cdot \nabla\phi-\Pi_2\notag\\
=&- \Pi_2-\partial_x\partial_y \psi\>\partial_x\psi-\Pi_1\,\partial_x \psi+\partial_x^2\psi\>\partial_y\psi-\Pi_2\>\partial_y \psi-u\>\partial_x \psi_t-v\>\partial_y \psi_t\notag\\
=&-\frac{\partial_x\partial_y}{\Delta}\big(\partial_y\psi\>\partial_x\partial_y\psi\big)+\mbox{similar terms}.
\end{align}
We also write
$$
F_0:=F_{0,low}+F_{0,high},
$$
where
$$
F_{0,low}=F_0\big(P_{\le s^\delta}u, P_{\le s^\delta}v,P_{\le s^\delta}\psi\big).
$$
We only consider $F_{0,low}$, since the piece $F_{0,high}$ can be treated in the manner as in the proof of Lemma \ref{lem:claim1}. We need the following lemma.

\begin{lemma}\label{lem:RR}
Let $\alpha>0, N_0\ge 0$, then for any $\epsilon>0$,
\begin{align}
\Big\||\nabla|^\alpha\langle\nabla \rangle^{N_0} \frac{\partial_x\partial_y}{-\Delta}f\Big\|_{L^1_{xy}}\lesssim
\|f\|_{L^1_{xy}}^{1-\alpha+\epsilon}\|\partial_x f\|_{L^1_{xy}}^{\alpha-\epsilon}+\big\|\langle\nabla\rangle^{N_0-1+\alpha+\epsilon}\partial_x f\big\|_{L^1_{xy}}.
\end{align}
\end{lemma}
The proof of Lemma \ref{lem:RR} is presented in Appendix \ref{app3}.

\vskip .1in
To prove \eqref{claim31}, we use Lemma \ref{lem:RR} for $\alpha=\frac12-\epsilon$ to get
\begin{align*}
&\Big\|\langle\nabla \rangle^{\frac{11}2+2\epsilon}|\nabla|^{\frac12-\epsilon}\frac{\partial_x\partial_y}{-\Delta}\big(\partial_yP_{\le s^\delta}\psi\>\partial_x\partial_yP_{\le s^\delta}\psi\big)\Big\|_{L^1_{xy}}\\
\lesssim &
\Big\|\partial_yP_{\le s^\delta}\psi\>\partial_x\partial_yP_{\le s^\delta}\psi\Big\|_{L^1_{xy}}^{\frac12+2\epsilon}
\Big\|\partial_x\big(\partial_yP_{\le s^\delta}\psi\>\partial_x\partial_yP_{\le s^\delta}\psi\big)\Big\|_{L^1_{xy}}^{\frac12-2\epsilon}\\
&
+\Big\|\langle\nabla\rangle^{6+2\epsilon}\partial_x\big(\partial_yP_{\le s^\delta}\psi\>\partial_x\partial_yP_{\le s^\delta}\psi\big)\Big\|_{L^1_{xy}}.
\end{align*}
By H\"older's inequality, we have
\begin{align}\label{e5.15}
&\Big\|\partial_yP_{\le s^\delta}\psi\>\partial_x\partial_yP_{\le s^\delta}\psi\Big\|_{L^1_{xy}}
\lesssim
\big\|\partial_yP_{\le s^\delta}\psi\big\|_{L^2_{xy}}\big\|\partial_x\partial_yP_{\le s^\delta}\psi\big\|_{L^2_{xy}}\notag\\
\lesssim &
s^{2\delta}\big\|\psi\big\|_{L^2_{xy}}\big\|\partial_x\psi\big\|_{L^2_{xy}}
\lesssim
s^{2\delta-1}\big\|(\vec u,\psi)\big\|_Y^2.
\end{align}
Moreover,
\begin{align}\label{e5.16}
&\Big\|\partial_x\big(\partial_yP_{\le s^\delta}\psi\>\partial_x\partial_yP_{\le s^\delta}\psi\big)\Big\|_{L^1_{xy}}\notag\\
\lesssim &
\big\|\partial_x\partial_yP_{\le s^\delta}\psi\big\|_{L^2_{xy}}^2+
\big\|\partial_yP_{\le s^\delta}\psi\big\|_{L^2_{xy}}\big\|\partial_x^2\partial_yP_{\le s^\delta}\psi\big\|_{L^2_{xy}}\notag\\
\lesssim &
s^{2\delta}\big\|\partial_x\psi\big\|_{L^2_{xy}}^2+
s^{2\delta}\big\|\psi\big\|_{L^2_{xy}}\big\|\partial_x^2\psi\big\|_{L^2_{xy}}\notag\\
\lesssim &
s^{2\delta-\frac32}\big\|(\vec u,\psi)\big\|_Y^2
\end{align}
and using \eqref{e5.16},
\begin{align}\label{e5.17}
\big\|\langle\nabla\rangle^{6+2\epsilon}\partial_x\big(\partial_yP_{\le s^\delta}\psi\>\partial_x\partial_yP_{\le s^\delta}\psi\big)\big\|_{L^1_{xy}}
\lesssim&
s^{(6+2\epsilon)\delta}\Big\|\partial_x\big(\partial_yP_{\le s^\delta}\psi\>\partial_x\partial_yP_{\le s^\delta}\psi\big)\Big\|_{L^1_{xy}}\notag\\
\lesssim &
s^{9\delta-\frac32}\big\|(\vec u,\psi)\big\|_Y^2.
\end{align}
Therefore, combining \eqref{e5.15}--\eqref{e5.17},  we deduce that, for any  $\delta>0$ satisfying $9\delta +\varepsilon\le \frac12$,
\begin{align*}
&\Big\|\langle\nabla \rangle^{5}|\nabla|^{\frac12-\epsilon}\frac{\partial_x\partial_y}{-\Delta}\big(\partial_yP_{\le s^\delta}\psi\>\partial_x\partial_yP_{\le s^\delta}\psi\big)\Big\|_{L^1_{xy}}
\lesssim
s^{-1-\varepsilon}\big\|(\vec u,\psi)\big\|_Y^2.
\end{align*}
For \eqref{claim32}, we only need to prove
$$
\Big\|\langle\nabla \rangle^{5}\nabla \frac{\partial_x\partial_y}{-\Delta}\big(\partial_yP_{\le s^\delta}\psi\>\partial_x\partial_yP_{\le s^\delta}\psi\big)\Big\|_{L^1_{xy}}
\lesssim
s^{-\frac32+\varepsilon}Q\big(\|(\vec u,\, \psi)\|_Y\big).
$$
By Lemma \ref{lem:Riesz-bound} and Lemma \ref{lem:RR} for $\alpha=1-\epsilon$, and by \eqref{e5.15}--\eqref{e5.17} we have
\begin{align*}
&\Big\|\langle\nabla \rangle^{5}\nabla\frac{\partial_x\partial_y}{-\Delta}\big(\partial_yP_{\le s^\delta}\psi\>\partial_x\partial_yP_{\le s^\delta}\psi\big)\Big\|_{L^1_{xy}}\\
\lesssim &
\Big\|\langle\nabla \rangle^{5+2\epsilon}|\nabla|^{1-\epsilon}\frac{\partial_x\partial_y}{-\Delta} \big(\partial_yP_{\le s^\delta}\psi\>\partial_x\partial_yP_{\le s^\delta}\psi\big)\Big\|_{L^1_{xy}}\\
\lesssim &
\Big\|\partial_yP_{\le s^\delta}\psi\>\partial_x\partial_yP_{\le s^\delta}\psi
\Big\|_{L^1_{xy}}^{2\epsilon}
\Big\|\partial_x\big(\partial_yP_{\le s^\delta}\psi\>\partial_x\partial_yP_{\le s^\delta}\psi\big)\Big\|_{L^1_{xy}}^{1-2\epsilon}\\
&
+\Big\|\langle\nabla\rangle^{6+2\epsilon}\partial_x\big(\partial_yP_{\le s^\delta}\psi\>\partial_x\partial_yP_{\le s^\delta}\psi\big)\Big\|_{L^1_{xy}}\\
\lesssim &
\big(s^{2\epsilon(2\delta-1)+(1-2\epsilon)(2\delta-\frac32)}
+s^{9\delta-\frac32}\big)\big\|(\vec u,\psi)\big\|_Y^2\\
\lesssim &
s^{-\frac32+\varepsilon}\big\|(\vec u,\psi)\big\|_Y^2.
\end{align*}
This gives \eqref{claim32}. To prove \eqref{claim33}, we only need to prove
$$
\Big\|\langle\nabla \rangle^5\nabla\frac{\partial_x\partial_y}{-\Delta}\partial_x\big(\partial_yP_{\le s^\delta}\psi\>\partial_x\partial_yP_{\le s^\delta}\psi\big)\Big\|_{L^1_{xy}}
\lesssim
s^{-\frac32-\varepsilon}Q\big(\|(\vec u,\, \psi)\|_Y\big).
$$
Again,
\begin{align*}
&\Big\|\langle\nabla \rangle^5\nabla\frac{\partial_x\partial_y}{-\Delta}\partial_x\big(\partial_yP_{\le s^\delta}\psi\>\partial_x\partial_yP_{\le s^\delta}\psi\big)\Big\|_{L^1_{xy}}\\
\lesssim &
\Big\|\langle\nabla \rangle^{5+2\epsilon}|\nabla|^{1-\epsilon}\frac{\partial_x\partial_y}{-\Delta}\partial_x\big(\partial_yP_{\le s^\delta}\psi\>\partial_x\partial_yP_{\le s^\delta}\psi\big)\Big\|_{L^1_{xy}}\\
\lesssim &
\Big\|\partial_x\big(\partial_yP_{\le s^\delta}\psi\>\partial_x\partial_yP_{\le s^\delta}\psi\big)\Big\|_{L^1_{xy}}^{2\epsilon}
\Big\|\partial_x^2\big(\partial_yP_{\le s^\delta}\psi\>\partial_x\partial_yP_{\le s^\delta}\psi\big)\Big\|_{L^1_{xy}}^{1-2\epsilon}\\
&
+\Big\|\langle\nabla\rangle^{6+2\epsilon}\partial_x^2\big(\partial_yP_{\le s^\delta}\psi\>\partial_x\partial_yP_{\le s^\delta}\psi\big)\Big\|_{L^1_{xy}}\\
\lesssim &
\big(s^{2\epsilon(2\delta-\frac32)+(1-2\epsilon)(2\delta-\frac74)}+s^{9\delta-\frac74}\big)\big\|(\vec u,\psi)\big\|_Y^2\\
\lesssim &
s^{-\frac32-\varepsilon}\big\|(\vec u,\psi)\big\|_Y^2,
\end{align*}
where $\delta>0$ and satisfies $9\delta+\varepsilon\le \frac14$. This gives \eqref{claim33}. This finishes the proof of
Lemma \ref{F0bd}.
\end{proof}

\vskip .3in
\section{Estimates on $\|\la\na\ra\pp_{xx}\psi\|_\infty,\,
\|\pp_t \vec{u}\|_\infty,\,\|\la\na\ra\pp_x \vec{u}\|_\infty$}
\label{s2}

\vskip .1in
This section continues the proof for \eqref{con1}. For the sake of clarity, we divide this
section into subsections with each one of them devoted to one term. The tool lemmas in
Section \ref{preliminary} will be used extensively here.

\subsection{Estimate on $\|\la\na\ra\pp_{xx}\psi\|_\infty$}
Using the Duhamel formula, namely (\ref{Inhomoeqs}),
\begin{align*}
\psi(t,\,x,y)=K_0(t,\,\pp_x)\psi_0+K_1(t,\,\pp_x)(\frac12\psi_0+\psi_1)+\int_1^tK_1(t-s,\,\pp_x)F_0(s)\,ds.
\end{align*}
For notational convenience, we may sometimes write $K_0(t)$ for $K_0(t,\,\pp_x)$ and $K_1(t)$ for $K_1(t,\,\pp_x)$.  Therefore,
\begin{align*}
\|\la\na\ra\pp_{xx} \psi\|_\infty\lesssim\,& \|\la\na\ra\pp_{xx} K_0(t )\psi_0\|_\infty
+ \|\la\na\ra\pp_{xx} K_1(t )(\frac12\psi_0+\psi_1)\|_\infty
\\
&+ \|\int_1^t\la\na\ra\pp_{xx} K_1(t-s)F_0(s)ds\|_\infty.
\end{align*}
By Corollary \ref{cor:tool12} and Lemma \ref{lem:estK0K1},
\begin{align*}
& \|\la\na\ra\pp_{xx} K_0(t)\psi_0\|_\infty\\
\lesssim \,&\big(\|\widehat{\pp_{xx}K_0}(t,\,\xi)\|_{L^1_{\xi}(|\xi|\le \frac12)}+\|\widehat{K_0}(t,\,\xi)\|_{L^\infty_{\xi}(|\xi|\geq \frac12)}\big)\, \|\la\na\ra^{2+\va}\pp_{xx}\partial_y\psi_0\|_{L^1_{xy}}
\\
\lesssim  \,&\big(t^{-\frac32}+e^{-t}\big)\|\la\na\ra^{5+\va}\psi_0\|_{L^1_{xy}}\lesssim t^{-\frac32}\|\la\na\ra^{5+\va}\psi_0\|_{X_0}.
\end{align*}
Since the estimates for $K_0$ and $K_1$ are the same, we also have
\begin{align*}
\|\la\na\ra\pp_{xx} K_1(t)(\frac12\psi_0+\psi_1)\|_\infty\lesssim \,t^{-\frac32}\big\|\la\na\ra^{5+\va}(\frac12\psi_0+\psi_1)\big\|_{X_0}.
\end{align*}
Moreover,
\begin{align}
&\left\|\int_1^t\la\na\ra\pp_{xx} K_1(t-s)\,F_0(s)\,ds\right\|_\infty
\notag \\
\lesssim \,&
\int^t_1\|\pp_{xx} K_1(t-s)\,\la\na\ra F_0(s)\|_\infty\,ds \notag\\
\lesssim &  \int^{\frac t2}_1\|\pp_{xx} K_1(t-s)\,\la\na\ra F_0(s)\|_\infty\,ds+
\int^t_{\frac t2}\|\pp_{x} K_1(t-s)\,\la\na\ra \pp_{x} F_0(s)\|_\infty\,ds. \label{attention}
\end{align}
By Corollary \ref{cor:tool12}, Lemma \ref{lem:estK0K1} and Lemma \ref{F0bd},
\begin{align*}
& \int^{\frac t2}_1\|\pp_{xx} K_1(t-s)\,\la\na\ra F_0(s)\|_\infty\,ds
\\
\lesssim \,& \int^{\frac t2}_1(\|\widehat{\pp_{xx} K_1}(t-s,\,\xi)\|_{L^1_{\xi}(|\xi|\le \frac12)}+\|\widehat{K_1}(t-s,\,\xi)\|_{L^\infty_{\xi}(|\xi|\geq \frac12)})\|\na\la\na\ra^{4+\va} F_0(s)\|_{L^1_{xy}}\,ds
\\
\lesssim \,& \int^{\frac t2}_1\langle t-s\rangle^{-\frac32}\|\,\na\la\na\ra^{4+\va} F_0(s)\|_{L^1_{xy}}\,ds
\\
\lesssim \,& \int^{\frac t2}_1\langle t-s\rangle^{-\frac32}s^{-\frac32+\va}\,ds\cdot Q\big(\|(\vec u,\, \psi)\|_Y\big)
\\
\lesssim \,&t^{-\frac32}Q\big(\|(\vec u,\, \psi)\|_Y\big);
\end{align*}
and
\begin{align*}
& \int_{\frac t2}^t\big\|\pp_{x} K_1(t-s)\,\la\na\ra \pp_{x} F_0(s)\big\|_\infty\,ds
\\
\lesssim \,& \int_{\frac t2}^t\big(\big\|\widehat{\pp_{x} K_1}(t-s,\,\xi)\big\|_{L^1_{\xi}(|\xi|\le \frac12)}+\big\|\widehat{K_1}(t-s,\,\xi)\big\|_{L^\infty_{\xi}(|\xi|\geq \frac12)}\big)\|\la\na\ra^{3+\va}\na\partial_x F_0(s)\|_{L^1_{xy}}\,ds
\\
\lesssim \,& \int_{\frac t2}^t\langle t-s\rangle^{-1}s^{-\frac32-\varepsilon}\,ds\, Q\big(\|(\vec u,\, \psi)\|_Y\big)
\\
\lesssim \,&t^{-\frac32}\, Q\big(\|(\vec u,\, \psi)\|_Y\big).
\end{align*}

Combining the estimates above, we obtain
$$
\sup_{t\geq 1}\Big(t^{\frac32}\big\|\la\na\ra\pp_{xx}\psi(t)\big\|_{\infty}\Big)
\lesssim
\|(\vec u_0,\,\psi_0)\|_{X_0} + Q\big(\|(\vec u,\, \psi)\|_Y\big).
$$

\subsection{Estimate of $\|\la\na\ra \pp_t\vec{u}\|_\infty$}
Using the Duhamel formula, we obtain
\begin{align*}
\pp_t u(t,\,x)=\dot K_0(t) u_0 +\dot K_1(t)(\frac12u_0+u_1)+\int_1^t\dot K_1(t-s)\,F_1(s)\,ds,
\end{align*}
then we have
\begin{align*}
\|\pp_t \la\na\ra u(t)\|_\infty
\le&\|\dot K_0(t ) \la\na\ra u_0\|_\infty +\|\dot K_1(t)\la\na\ra(\frac12u_0+u_1)\|_\infty\\
&\quad +\|\int_1^t\dot K_1(t-s)\,\la\na\ra F_1(s)\,ds\|_\infty.
\end{align*}
For the linear parts, by Lemmas \ref{lem:estK0K1}, \ref{lem:tool1} and  \ref{lem:tool2},  we have
\begin{align*}
\|\dot K_0(t ) \la\na\ra u_0\|_{L^\infty_\xi}
\lesssim & \big\|\widehat{\dot K_0}(t, \xi)\big\|_{L^1_\xi(|\xi|\le \frac12)}\big\|\partial_y \la\na\ra u_0\big\|_{L^1_{xy}}\\
&\quad
+\big\|\la \xi \ra^{-1}\widehat{\dot K_0}(t, \xi)\big\|_{L^\infty_\xi(|\xi|\ge \frac12)}\big\|\partial_y \la\na\ra^{3+\epsilon} u_0\big\|_{L^1_{xy}}\\
\lesssim & t^{-\frac32}\,
\big\|\la\na\ra^{4+\epsilon} u_0\big\|_{L^1_{xy}}.
\end{align*}
Also we have,
\begin{align*}
\|\dot K_1(t)\la\na\ra(\frac12u_0+u_1)\|_{L^\infty_{xy}}\lesssim t^{-\frac32}\,
\big\|\la\na\ra^{3+\epsilon} (\frac12u_0+u_1)\big\|_{L^1_{xy}}.
\end{align*}
For the nonlinear part,  by Lemma \ref{lem:claim1} we have
\begin{align*}
\|\int_1^t\dot K_1(t-s)&\la\na\ra F_1(s)\,ds\|_{\infty}
\lesssim
\int_1^t\Big(\|\widehat{\dot K_1}(t-s,\,\xi)\|_{L^1_{\xi}(|\xi|\le \frac12)}\\
&
+\|\widehat{\dot K_1}(t-s,\,\xi)\|_{L^\infty_{\xi}(|\xi|\geq \frac12)}\Big)
\cdot \|\lr^{2+\va}\na F_1(s)\|_{L^1_{xy}} \,ds
\\
\lesssim &\int_1^t\langle t-s\rangle^{-\frac32}\,s^{-\frac32-\varepsilon}ds\cdot Q(\|(\vec u,\,\psi)\|_Y)
\\
\lesssim &t^{-\frac32} Q(\|(\vec u,\,\psi)\|_Y).
\end{align*}
Combining with the estimates above, we deduce that
$$
\sup_{t\geq 1}\Big(t^{\frac32}\big\|\la\na\ra\pp_{t}u(t)\big\|_\infty\Big)
\lesssim
\|(\vec u_0,\,\psi_0)\|_{X_0}+
Q\big(\|(\vec u,\, \psi)\|_Y\big).
$$
Similarly, by the Duhamel formula, we have
\begin{align*}
\pp_t v(t,\,x)=\dot K_0(t) v_0 +\dot K_1(t)(\frac12v_0+v_1)+\int_1^t\dot K_1(t-s)\,F_2(s)\,ds.
\end{align*}
By using Lemma \ref{lem:claim2} instead, it obeys a similar estimate as the one for $u$. Therefore,
$$
\sup_{t\geq 1}\Big(t^{\frac32}\big\|\la\na\ra\pp_{t} \vec u(t)\big\|_\infty\Big)
\lesssim
\|(\vec u_0,\,\psi_0)\|_{X_0}+
Q\big(\|(\vec u,\, \psi)\|_Y\big).
$$

\subsection{Estimate of $\|\pp_x \la\na\ra \vec{u}(t)\|_\infty$}
The estimate for $\|\pp_x \la\na\ra u(t)\|_\infty$ is similar as that on $\|\pp_t \la\na\ra u(t)\|_\infty$,
\begin{align*}
\|\pp_x \la\na\ra u(t)\|_\infty\lesssim \,&\|\pp_x K_0(t ) \la\na\ra u_0\|_\infty +\|\pp_x K_1(t)\la\na\ra(\frac12u_0+u_1)\|_\infty\\
&
+\|\int_1^t \pp_x  K_1(t-s)\, \la\na\ra F_1(s)\,ds\|_\infty
\\
\lesssim &t^{-1}(\|\lr^{3+\va}u_0\|_{L^1_{xy}}+\|\lr^{3+\va}u_1\|_{L^1_{xy}})\\
&+\int_1^t\langle t-s\rangle^{-\frac32}\,s^{-\frac32-\varepsilon}ds\cdot Q(\|(\vec u,\,\psi)\|_Y)
\\
\lesssim& t^{-1}\big( \|(\vec u_0,\,\psi_0)\|_{X_0}+Q\big(\|(\vec u,\, \psi)\|_Y\big)\big).
\end{align*}
$\|\pp_x \la\na\ra v(t)\|_\infty$ can be bounded similarly as the one for $\|\pp_x \la\na\ra u(t)\|_\infty$. We omit the details. Thus,
$$
\sup_{t\geq 1}\Big(t\,\big\|\la\na\ra\pp_{x} \vec u(t)\big\|_\infty\Big)
\lesssim
\|(\vec u_0,\,\psi_0)\|_{X_0}+
Q\big(\|(\vec u,\, \psi)\|_Y\big).
$$

\vskip .3in
\section{Estimates on $\|\lr^3\psi\|_2,\, \|\lr^3\pp_x^2\psi\|_2,\,\|\pp_x^3\psi\|_2,\,\|\lr\pp_t \vec{u}\|_2$ and $\|\partial_x\partial_t v\|_2$}\label{s3}

\vskip .1in
The estimates on $\|\lr^3\psi\|_2,\, \|\lr^2\pp_x^2\psi\|_2,\,\|\pp_x^3\psi\|_2,\,\|\lr\pp_t \vec{u}\|_2$ and $\|\partial_x\partial_t v\|_2$ can be similarly obtained  as in section \ref{s2}.

\vskip .1in
By Lemmas \ref{lem:estK0K1}, \ref{lem:tool3} and \ref{lem:tool4},
\begin{align*}
\|\lr^3\psi\|_2\lesssim\,& \|K_0(t)\lr^3\psi_0\|_2+\|K_1(t)\lr^3(\frac12\psi_0+\psi_1)\|_2
\\
&+\|\int_1^tK_1(t-s)\lr^3F_0(s)\,ds\|_{L^2}
\\
\lesssim \,&t^{-\frac14} (\|\lr^{4+\va}u_0\|_{L^1_{xy}}+\|\lr^{4+\va}u_1\|_{L^1_{xy}})+ \int_1^t(\|\widehat{K_1}(t-s,\,\xi)\|_{L^2_{\xi}(|\xi|\le \frac12)}\\
&+\|\widehat{ K_1}(t-s,\,\xi)\|_{L^\infty_{\xi}(|\xi|\geq \frac12)})\|\,|\na|^{\frac12-\va}\la\na\ra^{\frac72+2\va} F_0(s)\|_{L^1_{xy}}\,ds
\\
\lesssim &t^{-\frac14} \|(\vec u_0,\,\psi_0)\|_{X_0}+\int_1^t\langle t-s\rangle^{-\frac14}\,s^{-1-\va}ds\cdot Q(\|(\vec u,\,\psi)\|_Y)
\\
\lesssim &t^{-\frac14}( \|(\vec u_0,\,\psi_0)\|_{X_0}+Q(\|(\vec u,\,\psi)\|_Y)).
\end{align*}
We define a Fourier multiplier operator $|\pp_{x}|^{1-2\varepsilon}$ as
$$
|\pp_{x}|^{1-2\varepsilon} f(x,y) = \int e^{ix \xi + iy \eta} \, |\xi|^{1-2\varepsilon} \widehat{f}(\xi, \eta)\,d\xi\,d\eta.
$$
Then as before, by Lemmas \ref{lem:estK0K1}, \ref{lem:tool3} and \ref{lem:tool4}, we have
\begin{align*}
\|\lr^3\pp_x^2\psi\|_2\lesssim\,& \|\pp_x^2K_0(t)\lr^3\psi_0\|_2+\|\pp_x^2K_1(t)\lr^3(\frac12\psi_0+\psi_1)\psi_0\|_2
\\
& +\|\int_1^t\pp_x^2K_1(t-s)\lr^2F_0(s)\,ds\|_{L^2}
\\
\lesssim \,& t^{-\frac54}(\|\lr^{6+\va}u_0\|_{L^1_{xy}}+\|\lr^{6+\va}u_1\|_{L^1_{xy}})+\int^\frac t2_1\|\pp_x^2K_1(t-s)\lr^2F_0(s)\|_{L^2}\,ds
\\
&+\int^t_\frac t2\big\||\pp_x|^{\frac32-2\varepsilon}K_1(t-s)\lr^2|\pp_x|^{\frac12+2\varepsilon}F_0(s)\big\|_{L^2}\,ds
\\
\lesssim \,& t^{-\frac54} \|(\vec u_0,\,\psi_0)\|_{X_0}+\int^\frac t2_1\Big(\|\widehat{\pp_x^2K_1}(t-s,\,\xi)\|_{L^2_{\xi}(|\xi|\le \frac12)}\\
&+\|\widehat{ K_1}(t-s,\,\xi)\|_{L^\infty_{\xi}(|\xi|\geq \frac12)}\Big)\|\,|\na|^{\frac12-\va}\la\na\ra^{\frac{11}2+2\va} F_0(s)\|_{L^1_{xy}}\,ds
\\
&+\int^t_\frac t2\Big(\big\||\xi|^{\frac32-2\varepsilon}\widehat{K_1}(t-s,\,\xi)\big\|_{L^2_{\xi}(|\xi|\le \frac12)}\\
&+\big\|\widehat{ K_1}(t-s,\,\xi)\big\|_{L^\infty_{\xi}(|\xi|\geq \frac12)}\Big)\|\,\la\na\ra^{5+2\va}\nabla F_0(s)\|_{L^1_{xy}}\,ds
\\
\lesssim &t^{-\frac54} \|(\vec u_0,\,\psi_0)\|_{X_0}+\int^\frac t2_1\langle t-s\rangle^{-\frac54}\,s^{-1-\va}ds\cdot Q\big(\|(\vec u,\, \psi)\|_Y\big)
\\
&+\int^t_\frac t2\langle t-s\rangle^{-1+\va}\,s^{-\frac32+\va}ds\cdot Q\big(\|(\vec u,\, \psi)\|_Y\big)
\\
\lesssim &t^{-\frac54}\big( \|(\vec u_0,\,\psi_0)\|_{X_0}+Q\big(\|(\vec u,\, \psi)\|_Y\big)\big).
\end{align*}
Now we consider $\|\pp_x^3\psi\|_2$. By Lemmas \ref{lem:estK0K1}, \ref{lem:tool3} and \ref{lem:tool4},
\begin{align*}
\|\pp_x^3\psi\|_2\lesssim\,& \|\pp_x^3K_0(t)\psi_0\|_2+\|\pp_x^3K_1(t)(\frac12\psi_0+\psi_1)\|_2
\\
&+\|\int_1^t\pp_x^3K_1(t-s)F_0(s)\,ds\|_{L^2}
\\
\lesssim \,& t^{-\frac74}(\|\lr^{4+\va}u_0\|_{L^1_{xy}}+\|\lr^{4+\va}u_1\|_{L^1_{xy}})+\int^\frac t2_1\|\pp_x^3K_1(t-s)F_0(s)\|_{L^2}\,ds
\\
&+\int^t_\frac t2\| \,|\pp_x|^{\frac32-2\va}K_1(t-s)|\pp_x|^{\frac32+2\va}F_0(s)\|_{L^2}\,ds.
\end{align*}
We proceed in the same as in the previous estimate,
\begin{align*}
\|\pp_x^3\psi\|_2\lesssim\,& t^{-\frac74} \|(\vec u_0,\,\psi_0)\|_{X_0}
+\int^\frac t2_1\langle t-s\rangle^{-\frac74}\,\big\|\,|\na|^{\frac12-\va}\lr^{\frac72+2\varepsilon}F_0(s)\big\|_{L^1_{xy}}ds
\\
&+\int^t_\frac t2\langle t-s\rangle^{-1+\va}\,\|\,\na\pp_x\lr^{2+2\va}F_0\|_{L^1_{xy}}ds
\\
\lesssim&
t^{-\frac74} \|(\vec u_0,\,\psi_0)\|_{X_0}+\int^\frac t2_1\langle t-s\rangle^{-\frac74}\,s^{-1-\va}ds\cdot Q\big(\|(\vec u,\, \psi)\|_Y\big)
\\
&+\int^t_\frac t2\langle t-s\rangle^{-1+\va}\,s^{-\frac32-\va}ds\cdot Q\big(\|(\vec u,\, \psi)\|_Y\big)
\\
\lesssim &t^{-\frac32}\big(  \|(\vec u_0,\,\psi_0)\|_{X_0}+Q\big(\|(\vec u,\, \psi)\|_Y\big)\big).
\end{align*}
We now bound $\|\langle\nabla\rangle\pp_t \vec{u}\|_2$. By the Duhamel formula,
\begin{align*}
\|\langle\nabla\rangle\pp_t u\|_2\lesssim\,&\|\dot K_0(t ) \langle\nabla\rangle u_0\|_2 +\|\dot K_1(t)\langle\nabla\rangle(\frac12u_0+u_1)\|_2\\
& + \big\|\int_1^t\dot K_1(t-s)\, \langle\nabla\rangle F_1(s)\,ds\big\|_2.
\end{align*}
By Lemma \ref{lem:estK0K1},
\begin{align*}
&\|\dot K_0(t ) \langle\nabla\rangle u_0\|_2 +\|\dot K_1(t)\langle\nabla\rangle(\frac12u_0+u_1)\|_2\\
&\qquad\qquad \lesssim\, t^{-\frac54}(\|\lr^{3+\va}u_0\|_{L^1_{xy}}+\|\lr^{2+\va}u_1\|_{L^1_{xy}}).
\end{align*}
By Lemmas \ref{lem:tool3}, \ref{lem:tool4} and \ref{lem:claim1},
\begin{align*}
& \big\|\int_1^t\dot K_1(t-s)\, \langle\nabla\rangle F_1(s)\,ds\big\|_2 \\
\lesssim & \int_1^{t} \Big(\|\widehat{\dot K_1}(t-s)\|_{L^2_\xi(|\xi|\le \frac12)}
+ \|\widehat{\dot K_1}(t-s)\|_{L^\infty_\xi(|\xi|\ge \frac12)}\Big)\big\||\nabla|^{\frac12-\va}\langle\nabla\rangle^{\frac32+\va} F_1(s)\big\|_2\\
\lesssim& \int_1^{t} \langle t-s\rangle^{-\frac54}\,s^{-\frac32-\va}\,ds \,Q\big(\|(\vec u,\, \psi)\|_Y\big)\\
\lesssim&  t^{-\frac54}\,Q\big(\|(\vec u,\, \psi)\|_Y\big).
\end{align*}
Therefore,
\begin{align*}
\|\langle\nabla\rangle\pp_t u\|_2
\lesssim t^{-\frac54}\big(  \|(\vec u_0,\,\psi_0)\|_{X_0}+Q\big(\|(\vec u,\, \psi)\|_Y\big)\big).
\end{align*}

Similarly,
\begin{align*}
\|\langle\nabla\rangle\pp_t v\|_2\lesssim\,&\|\dot K_0(t ) \langle\nabla\rangle v_0\|_2 +\|\dot K_1(t)\langle\nabla\rangle(\frac12v_0+v_1)\|_2\\
& +\big\|\int_1^t\dot K_1(t-s)\, \langle\nabla\rangle F_2(s)\,ds\big\|_2\\
\lesssim &t^{-\frac54}\big(  \|(\vec u_0,\,\psi_0)\|_{X_0}+Q(\|(\vec u,\,\psi)\|_Y)\big).
\end{align*}
Moreover,
\begin{align*}
\|\partial_x\partial_t v\|_2\lesssim\,&\|\dot K_0(t )\partial_x v_0\|_2 +\|\dot K_1(t)\partial_x(\frac12v_0+v_1)\|_2\\
& +\|\int_1^t \partial_x\dot K_1(t-s)\, F_2(s)\,ds\|_2
\\
\lesssim\, &t^{-\frac32}(\|\lr^{3+\va}v_0\|_{L^1_{xy}}+\|\lr^{2+\va}v_1\|_{L^1_{xy}})+\int_1^t t^{-\frac32}\big\|\,|\na|^{\frac12-\va}\lr^{\frac32+2\va} F_2 \big\|_{L^1_{xy}}ds\\
\lesssim &t^{-\frac32}\|(\vec u_0,\,\psi_0)\|_{X_0}+\int_1^t\langle t-s\rangle^{-\frac32}\,s^{-\frac32-\va}ds\cdot Q(\|(\vec u,\,\psi)\|_Y)
\\
\lesssim &t^{-\frac32}\big(  \|(\vec u_0,\,\psi_0)\|_{X_0}+Q(\|(\vec u,\,\psi)\|_Y)\big).
\end{align*}

\vskip .4in

\appendix

\section{}

This appendix serves four purposes.
It gives an explicit representation of $\Pi_1$ and $\Pi_2$, which has been used in the estimates of
$F_0$, $F_1$ and $F_2$. It also provides the proofs of Lemma \ref{lem:Riesz-bound} and Lemma \ref{lem:RR}.
Finally, the properties of the pressure are also given here.

\subsection{Another expression of  $\Pi_j,\,j=1,\,2$}\label{app1}

This subsection writes out each term of $\Pi_1$ and $\Pi_2$ explicitly. $\Pi_1$ and $\Pi_2$ are previously represented
in vector form in (\ref{N1term}) and (\ref{N2term}), respectively. Our key point here is that each term
is written in a way that it possesses as many directives in the $x$-direction as possible. As seen from
Lemma \ref{lem:estK0K1} in Section \ref{preliminary}, the more $x-$derivatives a term has, the faster
it decays in time. This point has played an important role in the estimates of $F_0$, $F_1$ and $F_2$ in
Section \ref{sec:non}.

\begin{align}\label{N1}
\Pi_1=&-u\pp_x u-v\pp_yu+\frac{\pp_{xy}}{\Dd}(u\pp_xv)-\frac{\pp_{xy}}{\Dd}(v\pp_xu)
\notag\\
&+\frac{\pp_{xx}}{\Dd}(u\pp_xu)+\frac{\pp_{xx}}{\Dd}(v\pp_yu)
-\pp_x\psi\pp_{xx}\psi-\pp_x\psi\pp_{yy}\psi
\notag\\
&+\frac{\pp_{xy}}{\Dd}(\pp_y\psi\pp_{xx}\psi)
+\frac{\pp_{yy}}{\Dd}(\pp_y\psi\pp_{xy}\psi)+\frac{\pp_{xx}}{\Dd}(\pp_x\psi\pp_{xx}\psi)+\frac{\pp_{xx}}{\Dd}(\pp_x\psi\pp_{yy}\psi),
\end{align}
and
\begin{align}\label{N2}
\Pi_2=&-u\pp_x v-v\pp_yv+\frac{\pp_{xy}}{\Dd}(u\pp_xu)+\frac{\pp_{xy}}{\Dd}(v\pp_yu)
\notag\\
&+\frac{\pp_{yy}}{\Dd}(u\pp_xv)+\frac{\pp_{yy}}{\Dd}(v\pp_yv)
-\pp_{yy}\psi\pp_y\psi-\pp_{xx}\psi\pp_y\psi
\notag\\
&+\frac{\pp_{yy}}{\Dd}(\pp_{xx}\psi\pp_y\psi)
+\frac{\pp_{yy}}{\Dd}(\pp_{yy}\psi\pp_y\psi)+\frac{\pp_{xy}}{\Dd}(\pp_{xx}\psi\pp_x\psi)+\frac{\pp_{xy}}{\Dd}(\pp_{yy}\psi\pp_x\psi)
\notag\\
=&-\frac{\pp_{xx}}{\Dd}(u\pp_xv)-\frac{\pp_{xx}}{\Dd}(v\pp_yv)+\frac{\pp_{xxy}}{\Dd}(uu)+\frac{\pp_{xyy}}{\Dd}(uv)
-\frac{\pp_{xx}}{\Dd}(\pp_{xx}\psi\pp_y\psi)
\notag\\
&-\frac{\pp_{xx}}{\Dd}(\pp_{yy}\psi\pp_y\psi)+\frac12\frac{\pp_{xxy}}{\Dd}(\pp_{x}\psi\pp_x\psi)+\frac{\pp_{xyy}}{\Dd}(\pp_{x}\psi\pp_y\psi)
-\frac{\pp_{xy}}{\Dd}(\pp_{y}\psi\pp_{xy}\psi)
\notag\\
=&-2\frac{\pp_{xx}}{\Dd}(u\pp_xv)+\pp_x(uv)+2\frac{\pp_{xy}}{\Dd}(u\partial_x u)
\notag\\
&-2\frac{\pp_{xy}}{\Dd}(\pp_{y}\psi\pp_{xy}\psi)+\frac{\pp_{xy}}{\Dd}(\pp_{x}\psi\pp_{xx}\psi)
+\frac{\pp_{yy}}{\Dd}(\pp_y\psi\pp_{xx}\psi)+\frac{\pp_{yy}}{\Dd}(\pp_{x}\psi\pp_{xy}\psi).
\end{align}

\subsection{Proof of Lemma \ref{lem:Riesz-bound}}\label{app2}
It is easily followed by the Littlewood-Paley's decomposition. Indeed,
\begin{align*}
\big\|\mathcal R  f\big\|_{L^1_{xy}}
\le &
\sum\limits_{M\le 1} \big\|P_M \mathcal R f\big\|_{L^1_{xy}}+\sum\limits_{M\ge 1} \big\|P_M\mathcal R f\big\|_{L^1_{xy}}\\
\le &
\sum\limits_{M\le 1} \big\|P_M  f\big\|_{L^1_{xy}}+\sum\limits_{M\ge 1} \big\|P_M f\big\|_{L^1_{xy}}\\
\lesssim &
\sum\limits_{M\le 1} M^{\epsilon}\big\|P_M |\nabla|^{-\epsilon}  f\big\|_{L^1_{xy}}+
\sum\limits_{M\ge 1} M^{-\epsilon}\big\|P_M\langle\nabla\rangle^{\epsilon} f\big\|_{L^1_{xy}}\\
\lesssim &\big\||\nabla|^{-\epsilon}\langle\nabla\rangle^{2\epsilon}f\big\|_{L^1_{xy}}.
\end{align*}
This prove Lemma \ref{lem:Riesz-bound}.

\vskip .1in

\subsection{Proof of Lemma \ref{lem:RR}}\label{app3}
By the Littlewood-Paley decomposition,
\begin{align*}
\Big\||\nabla|^\alpha \langle\nabla\rangle^{N_0}\frac{\partial_x\partial_y}{-\Delta}f\Big\|_{L^1_{xy}}
\lesssim &
\sum\limits_{M\le 1} M^{\alpha-1}\big\|\partial_xf\big\|_{L^1_{xy}}+\sum\limits_{M\ge 1} M^{N_0+\alpha-1}\big\|\partial_xf\big\|_{L^1_{xy}}\\
\lesssim &
\sum\limits_{M\le 1}M^{\frac{\epsilon}{\alpha-\epsilon}}  M^{-1}\big\|\nabla \partial_xf\big\|_{L^1_{xy}}^{\alpha-\epsilon}\big\|\partial_xf\big\|_{L^1_{xy}}^{1-\alpha+\epsilon}\\
& +\sum\limits_{M\ge 1}M^{-\epsilon} \big\|\langle\nabla\rangle^{N_0-1+\alpha+\epsilon}\partial_xf\big\|_{L^1_{xy}}\\
\lesssim &
\sum\limits_{M\le 1}M^{\frac{\epsilon}{\alpha-\epsilon}}\big\| \partial_xf\big\|_{L^1_{xy}}^{\alpha-\epsilon}\big\|f\big\|_{L^1_{xy}}^{1-\alpha+\epsilon}+\big\|\langle\nabla\rangle^{N_0-1+\alpha+\epsilon}\partial_x f\big\|_{L^1_{xy}}\\
\lesssim &
 \|f\|_{L^1_{xy}}^{1-\alpha+\epsilon}\|\partial_x f\|_{L^1_{xy}}^{\alpha-\epsilon}+\big\|\langle\nabla\rangle^{N_0-1+\alpha+\epsilon}\partial_x f\big\|_{L^1_{xy}}.
\end{align*}
This proves Lemma \ref{lem:RR}.

\subsection{Properties of the pressure $P$}\label{app4}
First of all, by applying $\nabla\cdot \vec u=0$, we have
\begin{align*}
P&=\frac{\nabla\cdot\big[\nabla\cdot(\nabla \phi\otimes\nabla\phi+\vec u\otimes \vec u)\big]}{-\Delta}\\
&=-2\partial_y\phi+\frac{\nabla\cdot\big[\nabla\cdot(\nabla \psi\otimes\nabla\psi+\vec u\otimes \vec u)\big]}{-\Delta}.
\end{align*}
Therefore, for any $t\ge 1$, by Sobolev' inequality,
\begin{align*}
\|P(t)\|_{H^N}\lesssim &
\|\partial_y\psi\|_{H^N}+\|\nabla \psi\otimes\nabla\psi\|_{H^N}+\|\vec u\otimes \vec u\|_{H^N}\\
\lesssim &
\|\nabla\psi\|_{H^N}+\|\nabla\psi\|_\infty\|\nabla \psi\|_{H^N}+\|\vec u\|_\infty\|\vec u\|_{H^N}\\
\lesssim & t^\varepsilon \big(\|(\vec u,\psi)\|_Y+\|(\vec u,\psi)\|_Y^2\big)\\
\lesssim & \varepsilon_0 \,t^\varepsilon.
\end{align*}
This implies that $P\in C\big([1,\infty);H^N(\R^2)\big)$.  Furthermore,
\begin{align*}
\|P(t)\|_\infty
\lesssim &
\|\partial_y\psi\|_\infty+\|\langle\nabla\rangle^2(\nabla \psi\otimes\nabla\psi)\|_2+\|\langle\nabla\rangle^2(\vec u\otimes \vec u)\|_2\\
\lesssim &
\|\nabla\psi\|_\infty+\|\langle\nabla\rangle^3\psi\|_2\|\nabla\psi\|_\infty+\|\langle\nabla\rangle^2\vec u\|_2\| \vec u\|_\infty\\
\lesssim & t^{-\frac12} \big(\|(\vec u,\psi)\|_Y+\|(\vec u,\psi)\|_Y^2\big)\\
\lesssim & \varepsilon_0 \,t^{-\frac12}.
\end{align*}

\vskip .4in
\section*{Acknowledgements}
J. Wu was partially supported by NSF grant DMS1209153.
J. Wu thanks Professor Chongsheng Cao for discussions and the School of Mathematical Sciences, Beijing Normal University for its hospitality.
Y. Wu was partially supported by the NSFC (No.11101042).
Xu was partially supported by NSFC (No.11371059), BNSF (No.2112023). Y. Wu and Xu were both partially supported by the Fundamental Research Funds for
the Central Universities of China.

\vskip .4in

\end{document}